\title{Continuum limits of random matrices \\ and the Brownian carousel}
\author{Benedek Valk\'o
\footnote{Department of Mathematics, University of Wisconsin - Madison, WI 53705, USA. valko@math.wisc.edu}
 \hskip 3em B\'alint Vir\'ag\footnote{Department of Mathematics, University of Toronto, ON M5S
2E4, Canada. balint@math.toronto.edu} 
}
\date{}
\newcommand{\idiota}{n_0/n}
\theoremstyle{plain}
    \newtheorem{theorem}{Theorem}
    \newtheorem{fact}[theorem]{Fact}
    \newtheorem{lemma}[theorem]{Lemma}
    \newtheorem{proposition}[theorem]{Proposition}
    \newtheorem{corollary}[theorem]{Corollary}
      \newtheorem{step}{Step}
\theoremstyle{definition} 
    \newtheorem{remark}[theorem]{Remark}
\theoremstyle{remark} 
\newcommand{\RRVlong}{Ram{\'\i}rez, Rider, and Vir\'ag \citeyearpar{RRV}}
\newcommand\mnote[1]{} 
\newcommand\be{\begin{equation}}
\newcommand\ee{\end{equation}}
\newcommand{\comment}[1]{}
\newcommand{\eps}{\varepsilon}
\newcommand{\Z}{{\mathbb Z}}
\newcommand{\ZZ}{{\mathbb Z}}
\newcommand{\UU}{{\mathbb U}}
\newcommand{\R}{{\mathbb R}}
\newcommand{\CC}{{\mathbb C}}
\newcommand{\N}{{\mathbb N}}
\newcommand{\HH}{{\mathbb H} }
\newcommand{\ev}{{\rm   E}}
\newcommand{\pr}{\mbox{\rm P}}
\newcommand{\one}{{\mathbf 1}}
\newcommand{\dist}{\mbox{\rm dist}}
\newcommand{\Arg}{\mbox{\rm Arg}}
\newcommand{\Poi}{\mbox{\rm Poi}}
\newcommand{\sm}{{\raise0.3ex\hbox{$\scriptstyle \setminus$}}}
\newcommand{\lcirc}{{\raise-0.15ex\hbox{$\scriptscriptstyle \circ$}}}
\newcommand{\FF}{{\mathcal F}}
\newcommand{\cd}{\stackrel{d}{\Longrightarrow}}
\renewcommand{\O}{\mathcal{O}}
\newcommand{\gl}{\lambda}
\newcommand{\vf}{\tilde\varphi}
\newcommand{\ftp}[1]{\left\lfloor #1 \right\rfloor_{2\pi}}
\newcommand{\rtp}[1]{\left\langle #1 \right\rangle_{2\pi}}
\newcommand{\frtp}[1]{\left\{ #1 \right\}_{2\pi}}
\newcommand{\RR}{{\mathbb R}}
\newcommand{\mm}{\mathcal M}
\newcommand{\lstar}{{\raise-0.15ex\hbox{$\scriptstyle \ast$}}}
\newcommand{\PSL}{\operatorname{PSL}(2,\RR)}
\newcommand{\UPSL}{\operatorname{{UPSL}}(2,\RR)}
\newcommand{\cp}{\stackrel{P}{\longrightarrow}}
\newcommand{\Sineb}{\operatorname{Sine}_{\beta}}
\newcommand{\Airyb}{\operatorname{Airy}_{\beta}}
\newcommand{\ep}{\varepsilon}
\newcommand{\ash}{\operatorname{ash}}
\newcommand{\tphh}{\hat\varphi^{ \odot}}
\newcommand{\tph}{\varphi^{ \odot}}
\newcommand{\Dfi}{\Delta \varphi}
\newcommand{\Dal}{\Delta \alpha}
\renewcommand{\Re}{\operatorname{Re}}
\renewcommand{\Im}{\operatorname{Im}}
\newcommand{\ha}{\hat \alpha}
\newcommand{\nnn}{\mu_n^2/4}
\newcommand{\varupsl}[1]{ \mathbf #1}
\newcommand{\nn}{  \mu_n}
\begin{document}
\bibliographystyle{balint}
\maketitle
\begin{abstract}
We show that at any location away from the spectral edge, the
eigenvalues of the Gaussian unitary ensemble and its general
$\beta$ siblings converge to $\Sineb$, a translation invariant
point process. This process has a geometric description in term of
the Brownian carousel, a deterministic function of Brownian motion
in the hyperbolic plane.

The Brownian carousel, a description of the a continuum
limit of random matrices, provides a convenient way to
analyze the limiting point processes. We show that the gap
probability of $\Sineb$ is continuous in the gap size and
$\beta$, and compute its asymptotics for large gaps.
Moreover, the stochastic differential equation version of
the Brownian carousel exhibits a phase transition at
$\beta=2$.
\end{abstract}

\begin{figure}[ht]
\begin{center}
\includegraphics*[width=370pt]{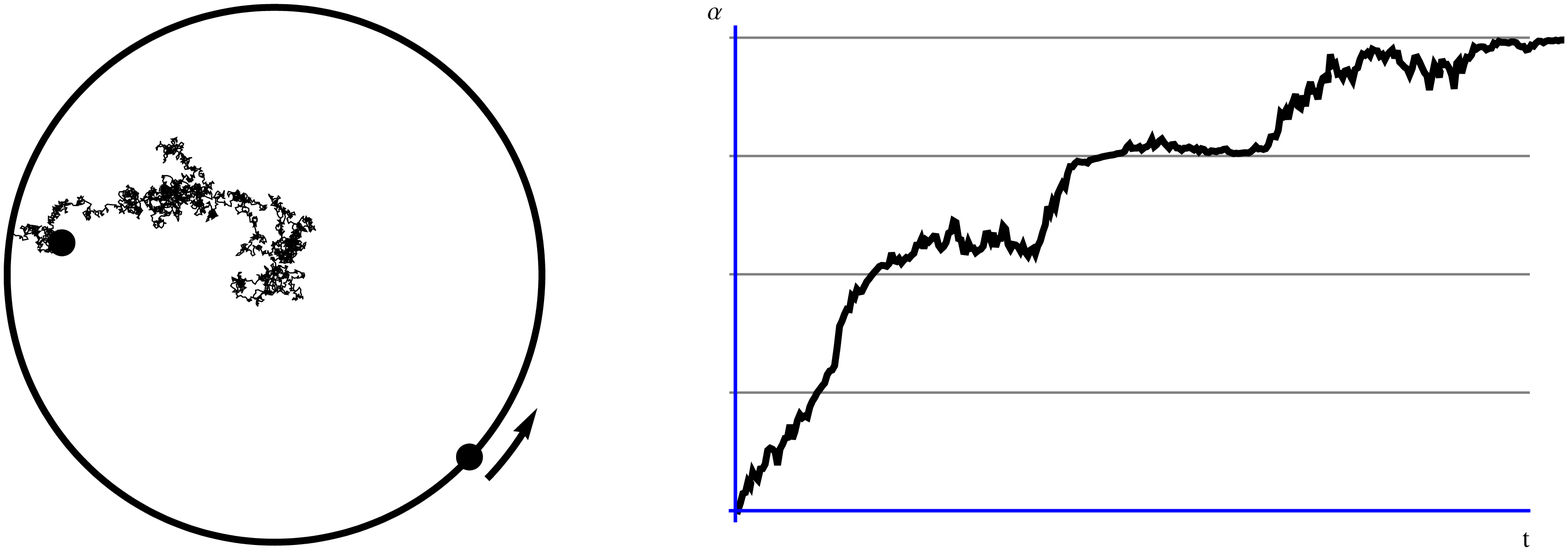}
\\ {The Brownian carousel and the winding angle $\alpha_\lambda$}
\end{center}
\end{figure}
\newpage
\tableofcontents
\newpage

\section{Introduction}

The Gaussian orthogonal and unitary ensembles are the most
fundamental objects of study in random matrix theory. In
the past decades, their eigenvalue distribution has shown
to be important in several areas of probability,
combinatorics, number theory, operator algebras, even
engineering (see \cite{Deift} for an overview). For
dimension $n$, the ordered eigenvalues $\lambda_1\le \ldots
\le \lambda_n \in \RR$ have joint density
\begin{equation}
\label{betadens}
 \frac{1}{Z_{n,
     \beta}}\;
e^{ - \beta \sum_{k=1}^n \lambda_k^2/4} \, \prod_{j < k} |
\lambda_j - \lambda_k |^{\beta},
\end{equation}
where $\beta=1,2$ for the Gaussian orthogonal and unitary
 ensembles, respectively. The above density makes sense for
any $\beta\ge 0$, and the point process is often called
Coulomb gas in Gaussian potential at inverse temperature
$\beta$. The goal of this paper is to study its
$n\to\infty$ point process limit away from the spectral
edge.

The limit is described via a special case of the  {\bf
hyperbolic carousel}. Let
\begin{itemize}
 \item $b$ be a path in the hyperbolic plane \vspace{-.5em}
 \item $z$ be a point on the boundary of the hyperbolic plane,
 and \vspace{-.5em}
 \item $f:\RR_+\to \RR_+$ be an integrable function.
\end{itemize}
To these three objects, the hyperbolic carousel associates a
multi-set of points on the real line defined via its counting
function $N(\gl)$ taking values in $\Z\cup\{-\infty,\infty\}$.
As time increases from $0$ to $\infty$, the boundary point $z$ is
rotated about the center $b(t)$ at angular speed $\lambda f(t)$.
$N(\gl)$ is defined as the integer-valued total winding number of the
point about the moving center of rotation.

The {\bf Brownian \bf carousel} is defined as the
hyperbolic carousel driven by hyperbolic Brownian motion
$b$. See Section \ref{s_carousel} for more details.

In order to study the $n\to\infty$ limit of \eqref{betadens} we
need to pick the center $\mu_n$ of the scaling window for each
$n$. Then the scaling factor follows the Wigner semicircle law.
Our main theorem gives necessary and sufficient conditions on
$\mu_n$ to get a bulk-type limit.

\begin{theorem}\label{t_main} For $\beta>0$, let $\Lambda_n$ denote the point process given
by \eqref{betadens}, and let $\mu_n$ be a sequence so that \
$n^{\nicefrac{1}{6}}(2\sqrt{n}-|\mu_n|)\to +\infty$. Then
\begin{equation}\nonumber
\sqrt{4n-\mu_n^2}\big(\Lambda_n-\mu_n\big) \Rightarrow \Sineb,
\end{equation}
where $ {\rm Sine}_\beta$ is the discrete point process given by
the Brownian carousel with parameters $f(t)=(\beta/4)e^{-\beta
t/4}$ and arbitrary $z$.
\end{theorem}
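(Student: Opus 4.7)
The plan is to start from the Dumitriu--Edelman tridiagonal model: the density \eqref{betadens} is realized by a symmetric tridiagonal matrix $T_n$ with independent Gaussian diagonal entries and $\chi$-distributed subdiagonal entries (with parameters proportional to $\beta(n-k)$). The eigenvalue equation $T_n v = \lambda v$ is then a three-term recursion; starting from $v_0=0$ and iterating produces vectors $(v_{k-1},v_k) \in \RR^2$. Up to $\pm 1$, the number of eigenvalues below $\lambda$ equals the Pr\"ufer-style winding number of this sequence around the origin, so the limiting point process can be read off from a suitable scaling limit of that winding.

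To obtain a bulk limit at energy $\mu_n$, I would set $\lambda = \mu_n + E/\sqrt{4n-\mu_n^2}$ and lift the random recursion to $\PSL$ (or its universal cover), identifying each step as a composition of a small hyperbolic rotation (encoding $\lambda$) and a small translation (encoding the randomness in the chi entries). Under a time change $t \approx \text{const}\cdot k/n$ chosen so that the deterministic drift matches the semicircle density at $\mu_n$, the translational part should, by a martingale CLT, converge to hyperbolic Brownian motion, while the rotational part contributes an angular speed proportional to $E\cdot f(t)$. Computing the fraction of the semicircle beyond $\mu_n$ as a function of $k/n$ produces the exponential profile $f(t)=(\beta/4)e^{-\beta t/4}$, so that the limiting object is exactly the Brownian carousel in the statement and the winding of the iterated vector equals its counting function $\Nl$.

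The execution would follow the framework of \RRVlong, developed for the soft edge, adapted to the bulk. The key technical steps are: (a) a coupling of the chi variables to Gaussians making a step-by-step SDE comparison possible; (b) tightness of the lifted process in path space; (c) convergence of the driving martingale to hyperbolic Brownian motion; and (d) transfer of the winding from discrete to continuous. The hypothesis $n^{1/6}(2\sqrt{n}-|\mu_n|) \to \infty$ is exactly what keeps $\mu_n$ sufficiently far from the edge: it guarantees that the local scale $\sqrt{4n-\mu_n^2}$ is of the right order and that the Taylor remainders in the Gaussian approximation to the chi steps are negligible after rescaling. Without this condition one would cross over into the Airy regime and the carousel profile $f$ would degenerate.

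The main obstacle is showing that the winding functional itself passes to the limit. Convergence of the driving diffusion is essentially classical, but $\Nl$ is a non-continuous functional of the path, so one needs (i) a truncation argument showing that the tail contribution for large $t$, where $f(t)$ is exponentially small and essentially no further winding occurs, is negligible both in the discrete model and in the limit, and (ii) joint tightness in $\lambda$ to upgrade finite-dimensional convergence of $\Nl$ at finitely many values to convergence of the full point process. Monotonicity of $\Nl$ in $\lambda$ (Sturm oscillation for the tridiagonal recursion) together with a crude a priori bound on the number of eigenvalues in a fixed window reduces (ii) to pointwise convergence on a dense countable set; item (i), a quantitative decay estimate for the winding generated after the carousel's radius of rotation becomes small, is where the bulk of the analytical work sits.
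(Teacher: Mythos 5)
Your high-level plan matches the paper's architecture quite closely: the Dumitriu--Edelman tridiagonal model, the Pr\"ufer/winding formulation of the eigenvalue count via Sturm oscillation, the lift of the three-term recursion to $\PSL$ and its universal cover, identification of $B(t)$ as hyperbolic Brownian motion via martingale CLT, and the exponential profile $f$. These are exactly the ingredients the paper assembles in Sections \ref{s_hyp}--\ref{s_SSE}. However, there is one genuine gap that the proposal underestimates badly, and it sits precisely where you say ``item (i), a quantitative decay estimate for the winding generated after the carousel's radius of rotation becomes small, is where the bulk of the analytical work sits.''

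The gap is this. Your truncation argument relies on the intuition that ``for large $t$, $f(t)$ is exponentially small and essentially no further winding occurs.'' That is correct for the \emph{continuum} carousel, but it does not establish the analogous statement for the discrete phase recursion on $\ell\in[n_0(1-\eps),n]$. For $\ell > n_0$ the macroscopic step $\varupsl{J}_\ell=\varupsl{Q}(\pi)\varupsl{A}(1,\mu_n/s_\ell)$ ceases to be a hyperbolic rotation: it becomes parabolic and then hyperbolic, so the carousel is simply not a valid approximation there, and the discrete phase can oscillate several extra times across $2\pi\Z$. The paper does \emph{not} treat $[n_0(1-\eps),n]$ as a single tail to be discarded. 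It splits this deterministically into a middle stretch $[m_1,m_2]$ (with $m_2\approx n_0-\kappa \mu_n^{2/3}$, the boundary between bulk and edge scales) and an end stretch $[m_2,n]$, and handles them by completely different machinery. On $[m_1,m_2]$ the argument is an oscillatory-sum estimate combined with a discrete Gronwall inequality (Propositions \ref{l_middlestretch}, \ref{l_tightestim1}, Lemma \ref{l_turbogw}), which only works because the drift oscillates fast enough and because the relative phase is close to $2\pi\Z$ at $m_1$. On $[m_2,n]$ the only available control is through the results of \RRVlong\ on the stochastic Airy operator: the paper shows (Lemma \ref{l_short}, Proposition \ref{p_unif}) that the target phase coming from the right boundary becomes uniform mod $2\pi$ and $\lambda$-independent, using precisely the edge convergence and the a.s.\ simplicity of the $\Airyb$ spectrum (equation \eqref{simplepp}). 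Your sketch contains no analogue of this and appears to assume the winding problem can be closed without ever touching the edge scaling. That is the missing idea, and it is not a small remainder of the argument: the reliance on the Airy edge result is essential to the proof, and the two-stretch decomposition with the $\kappa\mu_n^{2/3}$ cut is tuned specifically so that the Gronwall argument degrades gracefully into the Airy window.

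A secondary, smaller remark: the paper's counting identity (Proposition \ref{p_fi} (v)) is stated for an arbitrary meeting index $\ell$ by pairing the left phase $\varphi_{\ell,\lambda}$ with an independent right target phase $\tph_{\ell,\lambda}$, rather than following the left solution all the way to $\ell=n$ as you propose. This two-sided formulation is what makes the uniformization of $\varphi_{m_2,0}$ mod $2\pi$ available and allows the Airy-type analysis of $\tph$ to be done with an explicit limiting initial condition $f(0)/f'(0)=\zeta$ (Remark \ref{r_RRV}). A purely left-sided approach is not wrong in principle, but you would still need to control the left phase increment on $[m_2,n]$, and that again forces you into the Airy regime; the two-sided split is the paper's device for making that step tractable.
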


The convergence here is in law with respect to vague topology for
the counting measure of the point process. The limit and
convergence for the special values $\beta=1,2,4$ under  more
restrictive scaling conditions has been well-studied, see
\cite{mehta} or \cite{ForBook}. The Brownian carousel description is novel even in
these special cases.
We note that the ensemble (\ref{betadens}) may be
generalized by replacing the $\sum_k \gl_k^2$ in the
exponent by a similar sum involving a fixed function
$V$ of the eigenvalues. Assuming certain growth conditions
on $V$ the corresponding problem in the $\beta=2$
case can be treated using orthogonal polynomials and
Riemann-Hilbert methods, see e.g.~\cite{Deift}, \cite{DKM}.

Together with the following theorem, Theorem
\ref{t_main} gives a complete characterization of the possible
limiting processes for the ensembles \eqref{betadens}.
\begin{theorem}[\RRVlong]\label{t_rrv}
For $\beta>0$, let $\Lambda_n$ denote the point process given by
\eqref{betadens}, and  let $\mu_n$ be a sequence so that
$n^{\nicefrac{1}{6}}(2\sqrt{n}-\mu_n)\to a\in \RR$. Then
\begin{equation}\nonumber
n^{\nicefrac{1}{6}}(\Lambda_n-\mu_n) \Rightarrow \Airyb+\,a
\end{equation}
\end{theorem}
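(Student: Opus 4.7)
The plan is to mirror the dynamical strategy used for Theorem~\ref{t_main} but replace the hyperbolic rotation picture with an operator-theoretic one. Translate the eigenvalue problem for the $\beta$-ensemble into a random three-term recursion via the Dumitriu--Edelman tridiagonal model, rescale in the edge window, and prove that the rescaled discrete operator converges to the \emph{stochastic Airy operator} whose spectrum, up to a sign convention, defines $\Airyb$.

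Concretely, I would start from the representation: $\Lambda_n$ has the same law as the spectrum of a symmetric tridiagonal matrix $T_n$ with independent $\mathcal{N}(0, 2/\beta)$ diagonal entries and subdiagonal entries distributed as $\chi_{\beta(n-k)}/\sqrt{\beta}$. Writing $\lambda = \mu_n + x n^{-1/6}$ with $\mu_n = 2\sqrt{n} - a n^{-1/6} + o(n^{-1/6})$, and introducing a spatial coordinate $k = \lfloor y n^{1/3}\rfloor$ near the top-left corner, the deterministic mean of the subdiagonal produces (via Taylor expansion) a linearly growing potential $y$, while the centered Gaussian and chi fluctuations, summed across $n^{1/3}$-sized windows, contribute a Brownian noise of variance $4/\beta$. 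The formal continuum limit of $n^{1/3}(2\sqrt{n}\,I - T_n)$ in the window $y \in [0, \infty)$ is the stochastic Airy operator
\begin{equation}\nonumber
\mathcal{H}_\beta = -\partial_y^2 + y + \tfrac{2}{\sqrt{\beta}}\, B'(y), \qquad \text{Dirichlet at } y = 0,
\end{equation}
with $B$ a standard Brownian motion. The additive constant $a$ arises exactly from the difference between $n^{1/6}(\lambda - \mu_n)$ and $n^{1/6}(\lambda - 2\sqrt{n})$, producing the translation $\Airyb + a$.

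To make this rigorous I would define $\mathcal{H}_\beta$ through its bilinear form on a weighted Sobolev space, treating the white-noise term $\int f^2\, dB$ by integration by parts so that only $f^2$ (a function of bounded variation in $y$) is integrated against $B$. The confinement by the drift $y$ yields compact resolvent and a pure-point spectrum bounded below, unambiguously defining $\Airyb$. Convergence of eigenvalues then follows by comparing the min-max variational characterizations: smooth compactly supported trial functions sampled at scale $n^{-1/3}$ produce discrete trial vectors whose Rayleigh quotients converge pathwise to the continuum ones, giving upper bounds on the discrete eigenvalues. Matching lower bounds, and, from them, joint distributional convergence of the lowest $k$ eigenvalues of $n^{1/3}(2\sqrt{n}\, I - T_n)$ for every fixed $k$, are equivalent to the vague convergence of counting measures asserted.

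The main obstacle is producing uniform a priori control showing that near-edge eigenvectors of $T_n$ concentrate within the physical window $y = O(1)$, and that no spurious small eigenvalues escape from the bulk of $T_n$ or from $y = +\infty$. This requires high-probability pathwise bounds on the oscillation of the random potential and on the lower tails of the chi variables, combined with a Riccati / transfer-matrix oscillation argument that exploits the linear confinement $y$ to damp solutions at large $y$. Translating these estimates through the min-max machinery uniformly in the shift parameter $a$ is where most of the technical work lies; the rest is essentially interpolation between discrete and continuous Rayleigh quotients.
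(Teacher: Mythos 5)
This theorem is not proved in the present paper at all; it is stated as an imported result due to Ram{\'\i}rez, Rider, and Vir\'ag, and the text here only records the statement and sends the reader to that reference. So there is no ``paper's own proof'' to compare against; what can be said is how your sketch compares to the actual RRV argument.

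Your sketch does accurately reflect the RRV strategy. Their proof proceeds exactly along the lines you describe: start from the Dumitriu--Edelman tridiagonal $\beta$-model, rescale space by $n^{1/3}$ near the top-left corner and the spectral parameter by $n^{-1/6}$ around $2\sqrt n$, observe that the deterministic drift of the off-diagonal produces the linear potential $y$ while the centered chi and Gaussian fluctuations, aggregated over windows of size $n^{1/3}$, converge to a Brownian term with the variance $4/\beta$ you wrote. The stochastic Airy operator $\mathcal H_\beta = -\partial_y^2 + y + (2/\sqrt{\beta})\,B'(y)$ is defined through its bilinear form on a weighted function space (precisely so that the $B'$ term makes sense after integration by parts), the confining drift $y$ gives compact resolvent and a simple discrete spectrum bounded below, and convergence of the bottom finitely many eigenvalues is obtained via Courant--Fischer min--max by matching discrete and continuous Rayleigh quotients. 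The a priori tightness bounds you flag as the ``main obstacle''---confinement of near-edge eigenvectors to the $y=O(1)$ window, uniform control of the random potential, absence of spurious low eigenvalues escaping from the bulk or from large $y$---are indeed where the bulk of the technical work in RRV sits. You also correctly locate the sign convention for $\Airyb$ and the source of the additive shift $a$ from the discrepancy between $\mu_n$ and $2\sqrt n$. In short, the proposal is a faithful description of the RRV proof; what it is not, and could not be, is a proof internal to the paper under review, which simply cites the result.
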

Here $\Airyb$ is defined as $-1$ times the point process of
eigenvalues of the stochastic Airy operator, see \RRVlong\,
for more details. A straightforward diagonalization
argument gives the following corollary, which is proved in
Section \ref{s_proof}.

\begin{corollary}\label{maincor}
As $a\to\infty$ we have
$2\sqrt{a}(\operatorname{Airy}_\beta+a)\Rightarrow \Sineb$.
\end{corollary}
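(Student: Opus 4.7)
The approach is a diagonalization that matches the two limit theorems for the $\beta$-ensemble at their common boundary of applicability. Theorem \ref{t_main} applies when the scaling center sits inside the spectrum with $n^{1/6}(2\sqrt n-|\mu_n|)\to\infty$, while Theorem \ref{t_rrv} applies when that same quantity converges to a finite $a\in\RR$. By choosing $\mu_n$ of the form $2\sqrt n-a\,n^{-1/6}$ and then letting $a\to\infty$ along a slow enough diagonal subsequence, both theorems can be brought to bear on the same family of processes, which forces the claimed asymptotic identification of $\Sineb$ with rescaled shifts of $\Airyb$.

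Fix $a>0$ and set $\mu_n(a)=2\sqrt n-a\,n^{-1/6}$. Since $n^{1/6}(2\sqrt n-\mu_n(a))=a$, Theorem \ref{t_rrv} gives
\[
n^{1/6}\bigl(\Lambda_n-\mu_n(a)\bigr) \;\Rightarrow\; \Airyb+a \qquad (n\to\infty).
\]
A direct calculation yields $4n-\mu_n(a)^2=4a\,n^{1/3}-a^2 n^{-1/3}$, hence $\sqrt{4n-\mu_n(a)^2}/n^{1/6}\to 2\sqrt a$. Because rescaling a counting measure by a deterministic factor converging to a positive limit is continuous in the vague topology, the display above upgrades to
\[
\sqrt{4n-\mu_n(a)^2}\,\bigl(\Lambda_n-\mu_n(a)\bigr) \;\Rightarrow\; 2\sqrt a\,(\Airyb+a) \qquad (n\to\infty).
\]

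Now fix any sequence $a_k\to\infty$. Applying the previous display with $a=a_k$ yields, for each $k$, a strictly increasing $n_k\to\infty$ such that the L\'evy--Prokhorov distance (metrizing vague convergence of counting measures) between the law of $\sqrt{4n_k-\mu_{n_k}(a_k)^2}\,(\Lambda_{n_k}-\mu_{n_k}(a_k))$ and the law of $2\sqrt{a_k}\,(\Airyb+a_k)$ is at most $1/k$. Along this diagonal, the centers satisfy $n_k^{1/6}(2\sqrt{n_k}-\mu_{n_k}(a_k))=a_k\to\infty$, so Theorem \ref{t_main} applies to the sequence $(n_k,\mu_{n_k}(a_k))$ and produces $\sqrt{4n_k-\mu_{n_k}(a_k)^2}\,(\Lambda_{n_k}-\mu_{n_k}(a_k))\Rightarrow\Sineb$. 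The triangle inequality then forces $2\sqrt{a_k}(\Airyb+a_k)\Rightarrow\Sineb$, and since $a_k\to\infty$ was arbitrary, the corollary follows.

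No serious analytic obstacle is expected: the argument is a purely formal interpolation. The only points requiring care are the asymptotic $\sqrt{4n-\mu_n(a)^2}/n^{1/6}\to 2\sqrt a$ and the observation that vague convergence of point processes is preserved under multiplication by a convergent positive constant. All the probabilistic content has already been packaged into Theorems \ref{t_main} and \ref{t_rrv}; the corollary merely expresses their compatibility in the overlap of their scaling windows.
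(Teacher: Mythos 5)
Your proof is correct and takes essentially the same diagonalization route as the paper: apply Theorem \ref{t_rrv} with $\mu_n=2\sqrt n-a\,n^{-1/6}$, pick $n_k$ along a slow diagonal so the rescaled eigenvalue process is $1/k$-close to $2\sqrt{a_k}(\Airyb+a_k)$, and invoke Theorem \ref{t_main} along the same diagonal. You merely spell out explicitly the step (left implicit in the paper) that $\sqrt{4n-\mu_n^2}/n^{1/6}\to 2\sqrt a$, so the scaling factors of the two theorems are asymptotically interchangeable.
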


The proof of Theorem 1 is based on the tridiagonal matrix
models introduced by \cite{Trotter} and \cite{DE}.
\cite{Sutton} and \cite{ES} present heuristics that the
operators given by the tridiagonal matrices have a limit
whose eigenvalues give the Sine and Airy processes. Theorem
\ref{t_rrv} shows that this is indeed the case at the
spectral edge.
%
The bulk case, however, is fundamentally different: there seems to
be no natural limiting operator with the spectrum given by the
Sine point process.
Rather than taking a limit of the operator itself, we
consider limits of discrete variants of the phase functions
in the Sturm-Liouville theory.  This connection is explored
further in Section \ref{s_sch}, where we describe how the
Sine point process appears as a universal limit for a large
class of one-dimensional Schr\"odinger operators.

The eigenvalue equation of a real tridiagonal matrix gives a
three-term linear recursion for the eigenvectors. This becomes a
two-term recursion for the ratios of consecutive entries, which
then evolves by linear fractional transformations fixing the real
line. So in our case, the evolution operators perform a
time-inhomogeneous random process in $\PSL$, the group of
orientation-preserving isometries of the hyperbolic plane. To get
the Brownian carousel, we regularize this evolution and take
limits. An important tool is Proposition \ref{p_turboEK} (based on
the results of \cite{SV}), which yields stochastic differential
equation limits of Markov processes with heavy local oscillations.

The Brownian carousel description gives a simple way to
analyze the limiting point process. The hyperbolic angle of
the rotating boundary point as measured from $b(t)$ follows
the \textbf{stochastic sine equation}, a coupled one-parameter
family of stochastic differential equations
\begin{equation}\label{e_ssefenetudja}
d\alpha_\gl=  \gl  f\,dt + \Re ((e^{-i\alpha_\gl}-1)dZ), \qquad
\alpha_\gl(0)=0,
\end{equation}
driven by a two-dimensional standard Brownian motion. For a
single $\lambda$, this reduces to the one-dimensional
stochastic differential equation
\begin{equation}\label{e sse3a}
d\alpha_\gl=   \gl f\,dt + 2 \sin(\alpha_\gl/2)dW,\qquad
\alpha_\gl(0)=0,
\end{equation}
which converges as $t\to\infty$ to an integer multiple
$\alpha_\lambda(\infty)$ of $2\pi$. A direct consequence of
the definition of $\Sineb$ is the following.
\begin{proposition}\label{sseprop}
The number of points $N(\gl)$ of the point process $\Sineb$ in
$[0,\lambda]$ has the same distribution as
$\alpha_\lambda(\infty)/(2\pi)$.
\end{proposition}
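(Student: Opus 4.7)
The plan is to unwind the definition of the Brownian carousel and identify $\alpha_\lambda(t)$ with the cumulative hyperbolic angle subtended at the moving center $b(t)$ by the rotating boundary point, measured from a fixed reference direction in the tangent frame at $b(t)$ and normalized so that $\alpha_\lambda(0)=0$. Under this identification, the total winding number of the carousel at parameter $\lambda$ is, by the very definition given in the introduction, the integer $\alpha_\lambda(\infty)/(2\pi)$ (provided the limit exists). Since $\Sineb$ is defined through the counting function $N(\lambda)$ of the carousel, the statement $N(\lambda) \stackrel{d}{=} \alpha_\lambda(\infty)/(2\pi)$ amounts to (i) deriving the SDE satisfied by $\alpha_\lambda$, (ii) checking that the limit exists and is a multiple of $2\pi$, and (iii) reducing the 2D-driven system to the 1D equation \eqref{e sse3a} for a single $\lambda$.

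For (i), I would split the infinitesimal motion of the boundary point as seen from the center into two independent contributions. The prescribed rotation at angular speed $\lambda f(t)$ directly contributes the drift $\lambda f\,dt$. The motion of the center $b(t)$ as a hyperbolic Brownian motion induces a compensating change in the hyperbolic angle: applying the infinitesimal $\PSL$ action to the boundary direction $e^{i\alpha_\lambda}$ from the current center, a short calculation (with the Poincaré disk normalization fixed in Section~\ref{s_carousel}) shows that the response to a standard 2D driving Brownian increment $dZ$ is $\Re((e^{-i\alpha_\lambda}-1)\,dZ)$, with Itô corrections vanishing. This produces exactly \eqref{e_ssefenetudja}.

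For (ii), integrability of $f(t)=(\beta/4)e^{-\beta t/4}$ bounds the total deterministic drift, while the diffusion coefficient in \eqref{e sse3a}, namely $2\sin(\alpha_\lambda/2)$, vanishes at every point of $2\pi\Z$. A standard argument (already used in \cite{RRV} for related equations) shows that these are absorbing for the long-time limit in the sense that $\alpha_\lambda(t)$ converges almost surely as $t\to\infty$ to an element of $2\pi\Z$. For (iii), for any fixed $\lambda$ the quadratic variation of the noise in \eqref{e_ssefenetudja} is $|e^{-i\alpha_\lambda}-1|^2\,dt = 4\sin^2(\alpha_\lambda/2)\,dt$, so the law of the one-parameter marginal of \eqref{e_ssefenetudja} coincides with that of the solution of \eqref{e sse3a}. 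Combining these identifies $N(\lambda) = \alpha_\lambda(\infty)/(2\pi)$ pathwise in the 2D model, and gives the distributional equality with the 1D model, as claimed.

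The main obstacle is (i): computing the Itô expansion of the hyperbolic-angle function under the Möbius action by a hyperbolic Brownian increment of the center, and showing that the pushforward of planar Brownian motion in the right chart produces the complex-valued coefficient $e^{-i\alpha_\lambda}-1$. Everything else is bookkeeping on top of that change-of-coordinates computation, which is the technical content set up in Section~\ref{s_carousel}.
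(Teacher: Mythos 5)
Your proposal is correct and follows essentially the same route as the paper, which presents this proposition as a direct consequence of the definitions: $N(\lambda)$ is \emph{defined} in Section~\ref{s_carousel} as $\frac{1}{2\pi}\lim_{t\to\infty}\alpha_\lambda(t)$ for $\alpha_\lambda$ solving \eqref{e sse2}, the a.s.\ existence of that limit in $2\pi\Z$ is Proposition~\ref{l_sdeprop}(iv), and the one-dimensional reduction is recorded via the pathwise identity $\Re\big((e^{-i\alpha_\lambda}-1)dZ\big)=2\sin(\alpha_\lambda/2)\,\Im\big(e^{-i\alpha_\lambda/2}dZ\big)$ with $dW=\Im(e^{-i\alpha_\lambda/2}dZ)$. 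The only cosmetic difference is in your step (iii), where you match quadratic variations and invoke a martingale-representation argument rather than use this direct rewriting of the noise term; both are valid, the paper's being slightly more economical.
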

%

\begin{figure}\nonumber
\begin{center}
\includegraphics*[width=125pt]{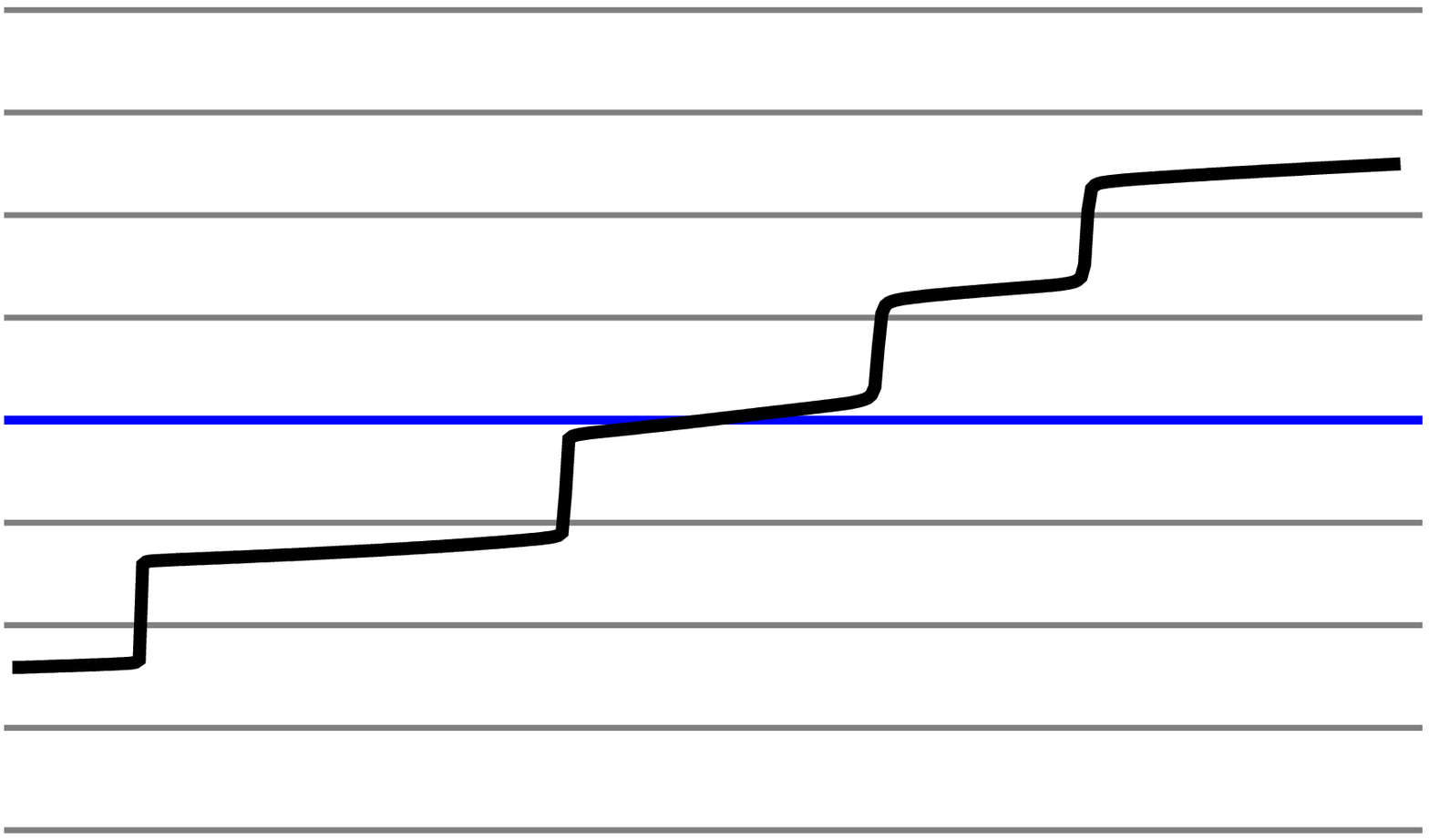} \phantom{MM}
\includegraphics*[width=125pt]{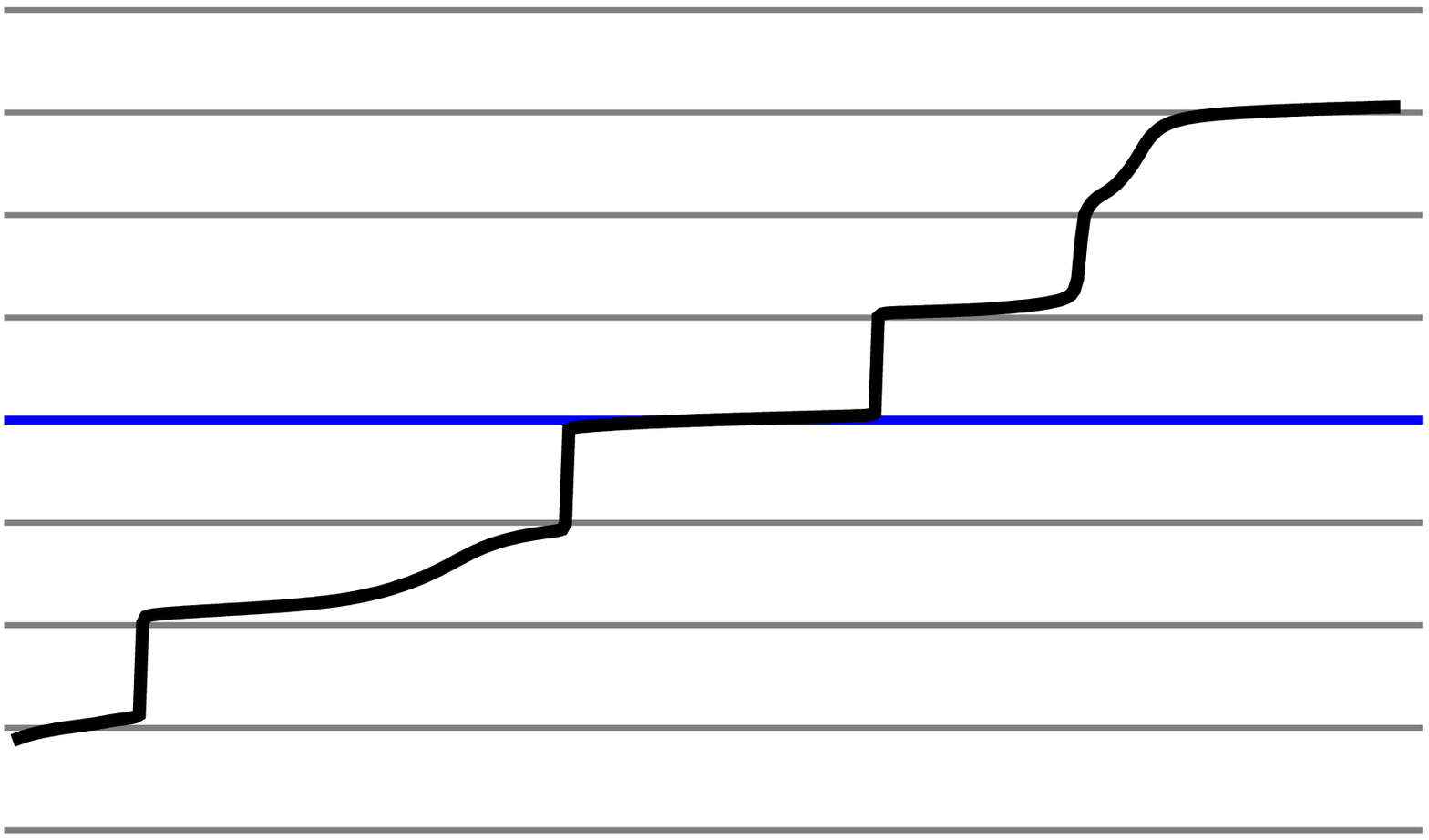} \phantom{MM}
\includegraphics*[width=125pt]{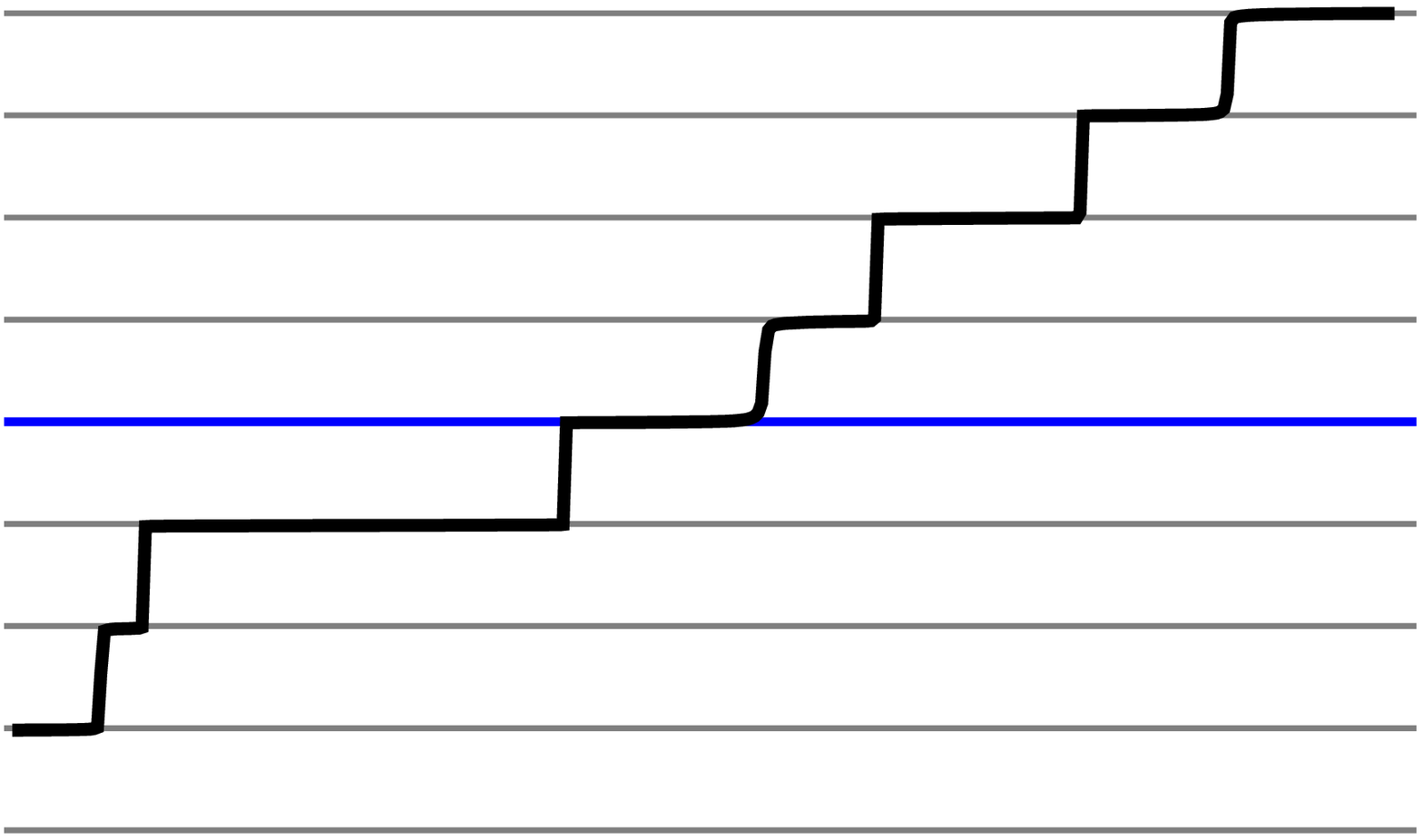}
\end{center}
\caption{The $\beta=1$ stochastic sine equation as a function of
$\lambda$ at three  times}
\end{figure}

Convergence to the solution of the coupled SDEs is the result
formally announced in the lecture by \cite{announce}. In
independent work, \cite{ks} present a related but different
description of the limit processes in the setting of circular
ensembles (see, e.g.\ \cite{ForBook}, Chapter 2 for discussion of
these models and \cite{k2} for further related results).

Proposition \ref{sseprop} allows us to analyze the point process
$\Sineb$,  for example to determine the asymptotics of large gap
probabilities. This has been predicted by \cite{Dy62} and proved
for the cases $\beta=2$ by \cite{Wi96} and for $\beta=1, 4$ by
\cite{JMMS80}; there, more refined asymptotics are presented; see
also \cite{DIZ96}.
\begin{theorem}
For $k\ge 0$ fixed and $\lambda \to \infty$, we have
$$
\pr(\mbox{\# of points in }[0,\lambda]\le k) =
\exp\big(-\lambda^2( \beta/64 +o(1))\big).
$$
\end{theorem}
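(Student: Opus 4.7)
By Proposition \ref{sseprop}, it suffices to show $-\log p_k(\lambda) = \beta\lambda^2/64 + o(\lambda^2)$, where $p_k(\lambda) := \pr(\alpha_\lambda(\infty) \le 2\pi k)$ and $\alpha_\lambda$ solves the one-dimensional SDE \eqref{e sse3a} with $f(t) = (\beta/4)e^{-\beta t/4}$. Since $\int_0^\infty f\,dt = 1$, integration gives $\alpha_\lambda(\infty) = \lambda + 2 M_\infty$ with $M_t := \int_0^t \sin(\alpha_\lambda/2)\,dW_s$, so the event $E_k := \{\alpha_\lambda(\infty)\le 2\pi k\}$ is equivalent to the atypical negative deviation $M_\infty \le (2\pi k - \lambda)/2$.

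The sharp rate $\beta\lambda^2/64$ comes from a variational principle. Under a Girsanov change of measure $dW = g\,dt + d\tilde W$, the SDE becomes
\[
  d\alpha_\lambda = \bigl(\lambda f + 2g \sin(\alpha_\lambda/2)\bigr)\,dt + 2\sin(\alpha_\lambda/2)\,d\tilde W,
\]
at entropy cost $\tfrac12 \int g^2\,dt$. To force $\alpha_\lambda$ to track a deterministic target $\alpha^*$, take $g = (\dot\alpha^* - \lambda f)/(2\sin(\alpha^*/2))$, with instantaneous cost $(\dot\alpha^* - \lambda f)^2/(8\sin^2(\alpha^*/2))$. The infimum of this cost over paths with $\alpha^*(0) = 0$ and $\alpha^*(\infty) \in 2\pi\{0, 1, \ldots, k\}$ is, to leading order in $\lambda$, achieved by paths holding $\alpha^*$ at $\pi$ (where $\sin^2(\alpha^*/2)$ is maximal) on a long middle interval, giving the rate
\[
  \tfrac{\lambda^2}{8}\int_0^\infty f^2\,dt = \tfrac{\lambda^2}{8}\cdot\tfrac{\beta}{8} = \tfrac{\beta\lambda^2}{64}.
\]

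For the \textbf{lower bound}, I would realise this heuristic by fixing a deterministic $\alpha^*$ that on $[0,\delta]$ follows the drift $\lambda f$ (so $\alpha^*(\delta)\approx \pi$ at zero cost), stays at $\pi$ on $[\delta, T]$, and smoothly descends to $0$ on $[T, 2T]$. Girsanov with drift $g(t) = (\dot\alpha^*(t) - \lambda f(t))/(2\sin(\alpha^*(t)/2))$ makes $\alpha_\lambda$ attracted to $\alpha^*$ under the new measure $Q$; a Gronwall-type concentration estimate yields $Q(E_k) \to 1$. Jensen's inequality applied to $\log p_k(\lambda) = \log\ev_Q[(dP/dQ)\mathbf{1}_{E_k}]$ gives
\[
  -\log p_k(\lambda) \le \tfrac12 \int g^2\,dt + o(\lambda^2),
\]
and calibrating $\delta\to 0$, $T\to\infty$ at an appropriate rate reduces the right side to $\beta\lambda^2/64 + o(\lambda^2)$, the ramps on $[0,\delta]$ and $[T,2T]$ contributing only subleading cost.

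For the \textbf{upper bound}, consider the exponential supermartingale $Y_t = \exp(-\theta M_t - \tfrac{\theta^2}{2}\langle M\rangle_t)$, $\langle M\rangle_t = \int_0^t \sin^2(\alpha_\lambda/2)\,ds$. Optimised Chernoff yields
\[
  \pr(M_\infty \le c,\ \langle M\rangle_\infty \le A) \le \exp(-c^2/(2A))
\]
for $c<0$, matching the target rate when $c=(2\pi k - \lambda)/2$ and $A\to 8/\beta$. \textbf{The main obstacle} is that $\langle M\rangle_\infty$ is a priori unbounded (paths staying near $\pi$ make it divergent). The missing input is the bound $\pr(E_k,\ \langle M\rangle_\infty > 8/\beta + \eta) \le \exp(-\beta\lambda^2/64 - c_\eta \lambda^2)$, obtained by a layered decomposition over dyadic slices of the quadratic variation, each slice showing that excess over the variational optimum carries extra exponential cost. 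This quantitative matching of upper and lower bounds at the level of the exponent is the technical heart of the proof.
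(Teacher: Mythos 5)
Your variational heuristic is the right one, and you correctly compute the rate: with $f(t)=(\beta/4)e^{-\beta t/4}$ you have $\|f\|_2^2 = \beta/8$, so $\frac{1}{8}\|f\|_2^2 = \beta/64$, matching Theorem~\ref{t_ggap}. The lower bound you propose is in the same spirit as the paper's: both use a Girsanov change of measure to force the phase to sit near $\pi$ (the point of maximal diffusion coefficient, hence cheapest place to absorb the drift). The paper implements this differently---it passes to $R=\log\tan(\tilde\alpha/4)$, which maps the interval $(0,2\pi)$ to $\R$ and tames the degeneracy of $\sin(\alpha/2)$ near $0$; it then dominates $R$ by a process $R^*$ with deterministic drift $q(t)$ and applies Girsanov to $R^*$ directly, finishing the descent with a simple Markov inequality rather than a Gronwall estimate. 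Your version would need care exactly where $\alpha^*$ descends toward $0$ and your control drift $g=(\dot\alpha^*-\lambda f)/(2\sin(\alpha^*/2))$ blows up; the paper's coordinate change and its weaker requirement (keep $\tilde\alpha \le \pi+\eps$, then Markov) sidestep this.

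The upper bound, however, has a genuine gap that you have identified but not closed, and I do not think the ``layered decomposition over dyadic slices of the quadratic variation'' fills it as stated. The problem is that in your Chernoff bound $\pr(M_\infty\le c,\ \langle M\rangle_\infty\le A)\le \exp(-c^2/(2A))$, a \emph{larger} value of $A$ gives a \emph{weaker} estimate. So decomposing $\{E_k\}$ over $\langle M\rangle_\infty\in[A2^j,A2^{j+1}]$ makes the per-slice Chernoff bound progressively useless; to make the union bound close you must separately prove that the event $\{E_k,\ \langle M\rangle_\infty\gg 8/\beta\}$ carries an extra exponential cost, and that is a nontrivial dynamical statement, not a corollary of your set-up. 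The paper avoids the issue entirely by never controlling $\langle M\rangle_\infty$ globally. It conditions on short time slices $[k\eps,(k+1)\eps]$; on each slice the quadratic variation is deterministically bounded by $4\eps$ (since $\sin^2\le 1$), so the crude Gaussian tail of Lemma~\ref{gtail} applies conditionally and gives $\pr(\alpha(t)<x\mid\mathcal F_s)\le\exp(-\lambda^2 r(s,t))$ with $r(s,t)=(\int_s^t f - x/\lambda)^2/(8(t-s))$. Multiplying over slices and passing to the Riemann sum $\sum_k r(\eps k,\eps k+\eps)\to\tfrac18\int f^2$ then yields the rate \emph{without} ever needing the missing input you describe. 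In short: the local Markovian slicing is the mechanism that turns the variational heuristic into a complete upper-bound proof, and your proposal does not yet contain an adequate substitute for it.
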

This is shown  in Section \ref{s_gap} for the case of a
more general parameter  $f$. Several similar asymptotic
identities can be computed this way, and continuity
properties can be studied. For the $\Sineb$ processes we
have
\begin{proposition}\label{p_continuity}
The probability distribution of $N(\lambda)$ is a continuous
function of $\lambda$ and $\beta$.
\end{proposition}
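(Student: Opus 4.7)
By Proposition \ref{sseprop} it suffices to show that the law of $\alpha_\gl(\infty)$ depends continuously on $(\gl,\bb)$. Since $\alpha_\gl(\infty)\in 2\pi\ZZ$ almost surely, its distribution function has jumps only at integer multiples of $2\pi$, and every other real is a continuity point; weak convergence therefore automatically yields convergence of the atoms $P(N(\gl)=k)$. To disentangle the $\bb$-dependence sitting inside $f(t)=(\bb/4)e^{-\bb t/4}$, apply the time change $s=\bb t/4$ together with the Brownian rescaling $\widetilde W(s)=\sqrt{\bb/4}\,W(4s/\bb)$. Setting $\tilde\alpha(s)=\alpha_\gl(4s/\bb)$ recasts \eqref{e sse3a} as
\begin{equation}\label{e_cont_sde}
d\tilde\alpha = \gl\, e^{-s}\, ds + \tfrac{4}{\sqrt{\bb}}\,\sin(\tilde\alpha/2)\,d\widetilde W,\qquad \tilde\alpha(0)=0,
\end{equation}
with $\alpha_\gl(\infty)=\tilde\alpha(\infty)$; the parameters now enter only as constants in front of smooth, globally Lipschitz coefficients.

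For the finite-time half, couple solutions of \eqref{e_cont_sde} for different parameters via a common $\widetilde W$. Because $\sin(x/2)$ is $\nicefrac12$-Lipschitz and the map $(\gl,\bb)\mapsto(\gl,4/\sqrt{\bb})$ is smooth, an $L^2$-Gronwall estimate for the difference $\tilde\alpha^{(\gl',\bb')}-\tilde\alpha^{(\gl,\bb)}$ gives, for every fixed $S<\infty$,
\[
\sup_{s\le S}\bigl|\tilde\alpha^{(\gl',\bb')}(s)-\tilde\alpha^{(\gl,\bb)}(s)\bigr|\to 0 \quad\text{in probability as $(\gl',\bb')\to(\gl,\bb)$.}
\]
Combined with a uniform tail bound on $|\tilde\alpha^{(\gl,\bb)}(\infty)-\tilde\alpha^{(\gl,\bb)}(S)|$, the standard three-term split for bounded continuous test functions $g$,
\[
\bigl|Eg(\tilde\alpha_n(\infty))-Eg(\tilde\alpha(\infty))\bigr|\le 2\|g\|_\infty P(\text{tail})+\bigl|Eg(\tilde\alpha_n(S))-Eg(\tilde\alpha(S))\bigr|,
\]
delivers weak convergence $\tilde\alpha^{(\gl_n,\bb_n)}(\infty)\Rightarrow\tilde\alpha^{(\gl,\bb)}(\infty)$ whenever $(\gl_n,\bb_n)\to(\gl,\bb)$.

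The main obstacle is the uniform tail estimate
\[
\lim_{S\to\infty}\sup_{(\gl,\bb)\in K}P\!\bigl(|\tilde\alpha^{(\gl,\bb)}(\infty)-\tilde\alpha^{(\gl,\bb)}(S)|>\pi/2\bigr)=0
\]
on any compact $K\subset(0,\infty)^2$. This cannot be proved by controlling the post-$S$ martingale increment directly, since its quadratic variation $\int_S^\infty (16/\bb)\sin^2(\tilde\alpha/2)\,ds$ is not small a priori; the estimate must come from the attractor structure of \eqref{e_cont_sde} at $2\pi\ZZ$, where both drift and diffusion vanish, so that for large $s$ the process is pinned near a single point of $2\pi\ZZ$. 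Quantitatively, one can either use a Lyapunov analysis for the projection of $\tilde\alpha$ onto $\RR/2\pi\ZZ$, or revisit the Brownian-carousel argument that already establishes $\alpha_\gl(\infty)\in 2\pi\ZZ$ a.s.\ and track the dependence of its rate on the parameters. Granted this tail estimate, Proposition \ref{p_continuity} follows from the weak convergence above together with the $2\pi\ZZ$-valued nature of $\alpha_\gl(\infty)$.
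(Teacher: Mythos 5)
Your outline stalls exactly where you flag it: the uniform tail estimate $\lim_{S\to\infty}\sup_{(\gl,\bb)\in K}\pr\bigl(|\tilde\alpha(\infty)-\tilde\alpha(S)|>\pi/2\bigr)=0$ is the whole ballgame, and ``use a Lyapunov analysis'' or ``revisit the carousel argument'' is not a proof. With your coupling (same driving $\widetilde W$, parameters only in coefficients) the post-$S$ difference of the two solutions still sees full-size noise until \emph{both} have settled near $2\pi\ZZ$, and nothing you have written controls how long that takes uniformly in $(\gl,\bb)$. As it stands this is a genuine gap, not a deferred technicality.

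The paper sidesteps this entirely with a sharper coupling that you are in a position to use. Proposition~\ref{l_sdeprop}\ref{qqi} says $\alpha_f-\alpha_g$ has the \emph{same law} as $\alpha_{f-g}$, because the difference solves the stochastic sine equation with drift parameter $f-g$ driven by the rotated Brownian motion $dZ^*=e^{-i\alpha_g}dZ$, which is again standard. Combined with the tail/moment bound $\ev|N_h|\le\|h\|_1/(2\pi)$ from \ref{qiii}, this gives a quantitative Lipschitz estimate in Wasserstein distance,
\[
d\bigl(\mathcal L(N_f),\mathcal L(N_g)\bigr)\ \le\ \ev|N_g-N_f|\ =\ \ev|N_{g-f}|\ \le\ \|f-g\|_1,
\]
which is Proposition~\ref{l_12}; continuity of the law of $N(\gl)$ in $(\gl,\bb)$ is then just continuity of $(\gl,\bb)\mapsto \gl\,(\bb/4)e^{-\bb t/4}$ in $L^1(\RR_+)$. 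Your finite-horizon Gronwall step and the $2\pi\ZZ$-support observation are both correct but unnecessary once you use the exact-in-law coupling; what you got right is the reduction via Proposition~\ref{sseprop}. Replace the ``same-$\widetilde W$'' coupling by the rotation coupling and the missing tail estimate becomes the already-proved $\ev|N_h|\le\|h\|_1/(2\pi)$.
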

In contrast, the stochastic sine equation exhibits a phase
transition at $\beta=2$.
\begin{theorem}\label{t_pt}
For any $\lambda>0$ we have a.s.
\begin{equation}\label{pt1} \mbox{for all }t
\mbox{ large enough }\alpha_\lambda(t)\ge \alpha_\lambda(\infty)
\end{equation}
 if and only if
$ \beta \le 2$. In particular, the probability of the event
\eqref{pt1} is not analytic at $\beta=2$ as a function of
$\beta$.
\end{theorem}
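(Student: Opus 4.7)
The plan is to analyze the process near its limit. Set $\alpha_\lambda(\infty)=2\pi N$ with $N\in\ZZ$, and let $\eta(t)=\alpha_\lambda(t)-2\pi N$, so that $\eta(t)\to 0$ almost surely. Substituting into \eqref{e sse3a} and using $\sin(\pi N+\eta/2)=(-1)^N\sin(\eta/2)$ gives
\begin{equation}\nonumber
d\eta=\lambda f(t)\,dt+2(-1)^N\sin(\eta/2)\,dW.
\end{equation}
Because $\eta$ is small for large $t$, the expansion $2\sin(\eta/2)=\eta+O(\eta^3)$ motivates comparison with the linearized SDE $d\tilde\eta=\lambda f(t)\,dt+\tilde\eta\,dW'$, where, on the event $\{N(\lambda)=k\}$, $W'=(-1)^k W$ is a Brownian motion. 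A Gronwall-type comparison, exploiting the smallness of $\eta$ together with the exponential decay of $f$, should show that the large-$t$ signs of $\eta$ and $\tilde\eta$ coincide.

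The linearized equation is explicitly solvable via the integrating factor $M(t)=e^{-W'(t)+t/2}$: one checks $d(M\tilde\eta)=M\lambda f\,dt$, yielding, for any reference time $t_0$,
\begin{equation}\nonumber
\tilde\eta(t)=M(t)^{-1}\Bigl[M(t_0)\tilde\eta(t_0)+\tfrac{\beta\lambda}{4}\int_{t_0}^t e^{-W'(s)+(2-\beta)s/4}\,ds\Bigr].
\end{equation}
Since $M(t)^{-1}>0$, the large-$t$ sign of $\tilde\eta$ equals the sign of the bracket, whose behavior is controlled by an exponential Brownian functional with drift coefficient $(2-\beta)/4$. This drift changes sign exactly at $\beta=2$.

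When $\beta\le 2$, the drift is nonnegative, and classical results on exponential functionals of Brownian motion (including Dufresne's identity at the critical case) give divergence of the integral to $+\infty$ almost surely. The bracket is therefore eventually positive and stays positive, so $\tilde\eta(t)>0$ for all large $t$; by the comparison step, $\eta(t)>0$ eventually, verifying \eqref{pt1}.

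When $\beta>2$, the negative drift makes the integral converge almost surely to a finite $I_\infty>0$, so the bracket converges to $B_\infty=M(t_0)\tilde\eta(t_0)+\lambda I_\infty$. Because $I_\infty$ is independent of $\mathcal F_{t_0}$ with strictly positive density on $(0,\infty)$, and by the Stroock--Varadhan support theorem there is positive probability that $\eta(t_0)$ takes arbitrarily negative values (construct a smooth control driving $\alpha_\lambda$ strictly below $2\pi N$ at time $t_0$), one obtains $P(B_\infty<0)>0$. On this event $\tilde\eta$, hence $\eta$, is eventually strictly negative, violating \eqref{pt1}; non-analyticity at $\beta=2$ then follows from the jump of $P(\eqref{pt1})$ off the value $1$. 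The main obstacle is the rigorous linearization: showing that the $O(\eta^3)\,dW$ correction cannot reverse the sign of $\eta$ for large $t$ requires a Gronwall estimate tuned to the smallness of $\eta$ and the decay of $f$; a secondary issue is combining the support-theorem input in the $\beta>2$ case with the conditioning $\{N(\lambda)=N\}$ so that the prescribed negative value of $\eta(t_0)$ is consistent with convergence to the given limit.
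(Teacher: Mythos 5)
Your high-level idea---that the transition at $\beta=2$ is controlled by the exponential Brownian functional $\int e^{-W'(s)+(2-\beta)s/4}\,ds$, which switches between a.s.\ divergence and a.s.\ finiteness exactly at $\beta=2$---does identify the correct mechanism, and this is the same functional the paper arrives at (the exponent $s(1/2-\beta/4)$ in its integrals). But the paper reaches it by a different and far more robust route: it sets $R=\log\tan(\alpha_\lambda/4)$, which sends the level $2\pi k$ (approached from below) to $+\infty$ and yields an \emph{exact} SDE $dR=\bigl(\tfrac{\lambda}{2}\cosh R\,e^{-\beta t/4}+\tfrac12\tanh R\bigr)dt+dB$ that is nondegenerate in the region of interest. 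Because this change of variables is exact, there is no linearization error to control; one then bounds $R$ above and below by explicit Brownian functionals and reads off divergence (for $\beta\le 2$) or a positive-probability finite bound (for $\beta>2$).

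Your proposal has two gaps, both of which you flag but neither of which is minor. (1) The linearization $2\sin(\eta/2)\approx\eta$ produces an $O(\eta^3)\,dW$ correction, and the quantity you need to preserve is precisely the \emph{sign} of $\eta$ near the degenerate point $\eta=0$. A Gronwall estimate bounds $|\eta-\tilde\eta|$, not the sign, and both processes spend all large time arbitrarily close to $0$, where the noise coefficient vanishes; there is no reason a small additive error cannot flip the sign. The $\log\tan$ substitution avoids this by moving the degenerate point to $+\infty$. (2) In the $\beta>2$ branch your linearization is around the \emph{random} level $2\pi N=\alpha_\lambda(\infty)$, which depends on the whole path. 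The support-theorem step prescribes $\eta(t_0)$ very negative, but that statement is only meaningful \emph{conditionally} on $N$, and conditioning on $N$ changes the law of the driving noise--so $I_\infty$ is no longer independent of $\mathcal{F}_{t_0}$ with its unconditional Dufresne density, and the event $\{B_\infty<0\}$ cannot be bounded below as you suggest. The paper sidesteps this entirely by phrasing both cases in terms of deterministic windows $(2\pi k-y,2\pi k)$ visited at stopping times, never conditioning on the limit value $N$. Until you close these two gaps (a rigorous two-sided comparison for the nonlinear SDE that preserves sign, and a conditioning-free formulation of the $\beta>2$ event), the argument is an outline rather than a proof.
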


Deift (personal communication, 2007) asked whether this
phase transition also appears on the level of gap probabilities.
This question remains open.


\section{The Brownian carousel and the stochastic sine equation}\label{s_carousel}

\subsection{Definitions}

In the Poincar\'e disk model of the hyperbolic plane a boundary point can be described by an angle.
The {\bf Brownian carousel ODE} with parameters $f(t)$ and $z_0$
describes the evolution of the lifted angle $\gamma_\gl(t)$ with $e^{i
\gamma_\gl(0)}=z_0$ as it is rotated about the center $B(t)$ at angular
speed $\gl f(t)$. Here $B(t)$ is hyperbolic Brownian motion, that is
the strong solution of the SDE
\begin{equation}\nonumber
\qquad dB = \frac{(1-|B|^2)}{2}\,d\tilde Z
\end{equation}
driven by complex Brownian motion $\tilde Z$ with standard real and imaginary parts. The speed of $\gamma_\gl$, as measured in units of
boundary harmonic measure from $B$, is $\gl f/(2\pi)$. To change to an
angle measured from $0$, we need to divide by the Poisson kernel
$$
\Poi(e^{i\gamma_\gl},w)
  =\frac{1}{2\pi} \Re\,\frac{e^{i\gamma_\gl}+w}{e^{i\gamma}-w}
  =\frac{1}{2\pi} \frac{1-|w|^2}{|e^{i\gamma_\gl}-w|^2},
$$
which yields the ODE
\begin{equation}\label{e bcode}
\partial_t \gamma_\gl = \frac{\gl f}{2\pi\,\Poi(e^{i\gamma_\gl},B)}=\gl f \frac{|e^{i\gamma_\gl}-B|^2}{1-
|B|^2}.
\end{equation}
The most convenient way to define the winding number $N(\lambda)$
of $e^{i\gamma_\lambda}$ about the moving center of rotation
$B(t)$ is to follow the corresponding angle. Let  $\alpha_\gl(t)$
denote the hyperbolic angle determined by the points $z_0, B(t)$
and $e^{i \gamma_\gl(t)}$. As we will check at the end of this
section, It\^o's formula shows that $\alpha$ satisfies the {\bf
stochastic sine equation} \eqref{e_ssefenetudja}, i.e.\
\begin{equation}\label{e sse2}
d\alpha_\gl=   \gl f\,dt + \Re ((e^{-i\alpha_\gl}-1)dZ),
\qquad \alpha_\gl(0)=0,
\end{equation}
where $dZ$ is simply complex white noise with standard real
and imaginary parts. The name of the SDE comes from the
fact that the last term equals $2 \sin(\alpha_\gl/2) \Im
(e^{-i\alpha_\gl/2}dZ)$. Since $dW=\Im (
e^{-i\alpha_\gl/2}dZ)$ is 1-dimensional white noise, we get
the SDE \eqref{e sse3a} for the single $\lambda$ marginals.

Propostion \ref{l_sdeprop} of the next section shows that
\begin{equation}\label{Ndef}
\frac{1}{2\pi}\lim_{t\to \infty} \alpha_\lambda (t)
\end{equation}
exists for every $\lambda$ a.s.\ and for every
$\lambda_1<\lambda_2$ a.s.\ $N({\lambda_1})\le
N({\lambda_2})$. Thus  $N(\lambda)$ can be defined as the
unique random right-continuous function which agrees with
\eqref{Ndef} for every $\lambda$ a.s.
%
%


To deduce \eqref{e sse2}, let ${\mathcal T}(w,z)$ denote the
M\"obius automorphism of the unit disk taking $z_0$ to $1$ and
taking $w$ to $0$. It is given by the formula
\begin{equation}\label{e_mobi}
{\mathcal T}(w,z)=\frac{S(w,z)}{S(w,z_0)}, \qquad S(w,z) =
\frac{z-w}{1-\overline w z}.
\end{equation}
Then $\alpha$ is defined as the continuous solution of
\begin{equation}\label{e_car_sse}
\alpha(0)=0, \qquad e^{i\alpha(t)}={\mathcal
T}(B_t,e^{i\gamma(t)}).
\end{equation}
The stochastic sine equation \eqref{e sse2} follows from
taking logarithms and applying It\^o's formula. For the
driving Brownian motion we get the explicit expression
\begin{equation}\nonumber 
dZ = {2\partial_2{\mathcal T}(B,B)}dB = \frac{2}{1-|B|^2}\;
\frac{1-\overline B}{1-B}\ dB.
\end{equation}

\begin{remark} By It\^o's formula applied to the logarithm of \eqref{e_car_sse}, the noise
term in  \eqref{e sse2} can be interpreted as the infinitesimal
movement of the angle $\alpha$ under the difference of
transformations $d{\mathcal T} ={\mathcal T}(B+dB,\gamma){\mathcal T}(B, \gamma)^{-1}$. This
infinitesimal M\"obius transformation $d{\mathcal T}$ moves 0 to
${\mathcal T}(B,B+dB)=\partial_2 {\mathcal T}(B,B)dB$, a standard complex Brownian
motion increment. Such a transformation $d{\mathcal T}$ changes the angle of
any two points on the boundary by a Brownian increment with
standard deviation proportional to their distance.
 This gives a
more conceptual explanation of the noise term in \eqref{e sse2}.
\end{remark}

\subsection{Properties of the Brownian carousel}

Let  $L^1_*$ denote the set of absolutely integrable
functions of $\RR^+$ which tend to $0$  at  $+\infty$.
Given a hyperbolic Brownian motion and a boundary point
$z_0$, the Brownian carousel associates a random counting
function $N(\gl)$ to each $f\in L^1_*$. More generally, it
is fruitful to study how $N(\lambda)$ changes when the
parameter $f$ varies but the Brownian path remains fixed.
In this case $\lambda$ can be absorbed in the parameter $f$
so we will use the notation $N_f=N_f(1)$, and $\alpha_f$
for the case $\lambda=1$.

%
\renewcommand{\theenumi}{(\roman{enumi})}
\renewcommand{\labelenumi}{\theenumi}
\begin{proposition}[Properties of the Brownian carousel]\label{l_sdeprop}
 We have
\begin{enumerate}
\item \label{qqi}  $\alpha_f-\alpha_g$ has the same distribution
as $\alpha_{f-g}$,

\item \label{qi} $\alpha_f(t)$ is increasing in $f$,

\item \label{szelep} $\ftp{\alpha_f(t)}$ is nondecreasing in $t$ when  $f\ge 0$.
Here  $\ftp{x}=\max\left (2\pi \Z \cap (-\infty,x]\right)$.
 \item
\label{qii} $N_f=\frac{1}{2\pi}\lim_{t\to \infty}
\alpha_f(t)$ exists and is an integer a.s.,
 \item \label{qiib} $N_0=0$ and $N_f$
is increasing in $f$, \item \label{qiii} $\ev |N_f| \le
\frac{1}{2\pi}\|f\|_1, $
 \item \label{qiv}  $\ev N_f=\frac{1}{2\pi}\int_0^\infty f(x)\,dx$, and
\item \label{qv}  $N_f$ has exponential tails. For integers
$a,k>0$ we have
$$\pr(|N_f|\ge a k )\le 2 \left[\frac{\|f\|_1}{2\pi a}\right]^k.$$
\end{enumerate}
\end{proposition}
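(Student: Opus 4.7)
My plan is to derive all eight items from the stochastic sine equation~\eqref{e sse2}, taking~(i) as the linchpin. For~(i), I would verify that both $\alpha_f-\alpha_g$ and $\alpha_{f-g}$ are continuous semimartingales with the same bounded-variation part $\int(f-g)\,ds$ and local-martingale parts whose quadratic variations equal $4\sin^2((\alpha_f-\alpha_g)/2)\,dt$ and $4\sin^2(\alpha_{f-g}/2)\,dt$ respectively; the former uses the identity $|e^{ia}-e^{ib}|^2=4\sin^2((a-b)/2)$ applied to the noise term $\Re((e^{-i\alpha_f}-e^{-i\alpha_g})\,dZ)$. By Dambis--Dubins--Schwarz each process is a weak solution of the same scalar SDE $d\alpha=(f-g)\,dt+2\sin(\alpha/2)\,dW$ starting from $0$, and weak uniqueness (the diffusion is globally Lipschitz and bounded) gives equality in law. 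Part~(ii) is then the standard comparison theorem for scalar SDEs driven by the same noise, applied with drifts $f$ versus $g$.

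For~(iii), the diffusion $2\sin(\alpha/2)$ vanishes on $2\pi\ZZ$, so an occupation-time/Tanaka argument for $(\alpha_f-2\pi k)^-$ shows the nonnegative drift prevents downcrossings of $2\pi k$. For~(iv) with $f\ge 0$, Ito yields $\ev\alpha_f(t)=\int_0^t f\,ds\le\|f\|_1$, so~(iii) and Fatou force $\ftp{\alpha_f(t)}$ to stabilize a.s.\ at a finite multiple of $2\pi$; once it stabilizes at $2\pi k$ the fractional part lives in $[0,2\pi)$ with integrable drift and with diffusion vanishing at $0$, and a recurrence argument drives it to $0$. For general $f\in L^1_*$, the increments $\alpha_f(t_2)-\alpha_f(t_1)$ can be controlled in $L^1$ by combining~(i) with~(vi) applied to $f\cdot\mathbf 1_{[t_1,t_2]}$, giving a Cauchy criterion and hence an a.s.\ limit. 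Part~(v) is then immediate from~(ii) and monotonicity of limits.

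For~(vii) I would pass to $t\to\infty$ in $\ev\alpha_f(t)=\int_0^t f\,ds$, justified by monotone convergence when $f\ge 0$ and by dominated convergence in general using~(vi). For~(vi), the coupling~(i) applied in the limit $t\to\infty$ gives $N_f\ed N_{f^+}-N_{f^-}$, so the triangle inequality together with~(vii) on $f^\pm$ yields $\ev|N_f|\le\|f\|_1/(2\pi)$. For~(viii) I would iterate on hitting times: let $\tau$ be the first time $|\alpha_f|$ reaches $2\pi a$; by~(vi) and Markov's inequality, $\pr(\tau<\infty)\le\|f\|_1/(\pi a)$. Since $\alpha_f(\tau)\in 2\pi\ZZ$, the shifted process $\alpha_f(\tau+\cdot)-\alpha_f(\tau)$ is again a carousel angle driven by the tail $f(\tau+\cdot)$ (the diffusion is $2\pi$-periodic up to sign), so the strong Markov property gives the recursion $\pr(|N_f|\ge a(k+1))\le\pr(\tau<\infty)\cdot\sup_h\pr(|N_h|\ge ak)$ with $\|h\|_1\le\|f\|_1$; induction closes the estimate, and the factor $2$ absorbs the $\pm$ ambiguity in the base case.

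The main obstacle I anticipate is~(iv) in the signed case, since~(iii) no longer forces monotone stabilization of $\ftp{\alpha_f}$; turning the distributional identity~(i) into genuine control of late-time oscillations depends circularly on~(vi), so these two items must be proved in parallel — for instance by truncating to a finite horizon $[0,T]$, applying~(vi) on the truncation, and sending $T\to\infty$ to obtain both the a.s.\ limit and its $L^1$ bound simultaneously.
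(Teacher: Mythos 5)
Your outline follows the same skeleton as the paper — item (i) is the linchpin, (ii) is the drift-comparison theorem, (iii) comes from non-negativity plus restarting at $2\pi$-levels, (vi)/(vii) proceed by splitting $f={f^+}-f^-$, and (viii) iterates the Markov bound at hitting times of $2\pi a \Z$. Two remarks, one cosmetic and one substantive.

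On (i), your Dambis--Dubins--Schwarz route (match quadratic variations, invoke weak uniqueness) is correct but indirect. The paper rewrites the noise term directly: since
$e^{-i\alpha_f}-e^{-i\alpha_g}=e^{-i\alpha_g}\bigl(e^{-i(\alpha_f-\alpha_g)}-1\bigr)$,
the difference $\alpha_f-\alpha_g$ is a \emph{strong} solution of the stochastic sine equation with drift $f-g$ and driving noise $dZ^*=e^{-i\alpha_g}\,dZ$, which is again a standard complex Brownian motion; no reduction to a scalar SDE or appeal to weak uniqueness is needed.

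The substantive gap is in (iv) and the way you plan to extend to signed $f$. For $f\ge 0$, the paper's key observation — which your argument lacks — is that $\alpha_f - F$ (with $F=\int_0^t f$) is a continuous local martingale bounded \emph{below} by $-\|f\|_1$, hence a supermartingale, hence a.s.\ convergent; $\alpha_f$ then converges, and must converge to a zero of the diffusion coefficient, i.e.\ to $2\pi\Z$. Your replacement — Fatou gives $\liminf\alpha_f<\infty$ a.s., forcing $\ftp{\alpha_f}$ to stabilize, then ``a recurrence argument drives the fractional part to $0$'' — leaves the final step unproved: you need to rule out persistent oscillation inside a $2\pi$-cell once the integer part has locked, and the drift is decaying while the noise is not, so one genuinely needs the martingale convergence. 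For signed $f$, a circle of dependencies you worried about does not in fact arise: the paper uses claim (ii) to sandwich
$\alpha_{-f^-}\le\alpha_f\le\alpha_{f^+}$,
noting $\alpha_{-f^-}\ed-\alpha_{f^-}$, and since $\alpha_{f^\pm}(t)$ are uniformly integrable (from the $f\ge 0$ case: $\ftp{\alpha_{f^\pm}}$ is nondecreasing with integrable limit), both $\alpha_f(t)$ and $\alpha_f-F$ are UI; hence $\alpha_f-F$ is a true UI martingale, which gives both the a.s.\ limit (iv) and $\ev\alpha_f(\infty)=F(\infty)$ (vii) in one stroke. Your ``truncate to $[0,T]$ and send $T\to\infty$'' plan would need to be made precise; the sandwich is both simpler and complete. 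Finally, in (vii) you invoke monotone convergence on $\ev\alpha_f(t)=F(t)$, but $\alpha_f(t)$ is not monotone in $t$ — only $\ftp{\alpha_f(t)}$ is — so what is actually used is MCT on $\ftp{\alpha_f}$ together with the bounded (by $2\pi$) fractional part, i.e.\ exactly the UI argument above.
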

\begin{proof}
Claim \ref{qqi} holds because $\alpha_f-\alpha_g$ solves
\begin{equation}\nonumber
d\alpha_\gl=   \gl (f-g)\,dt + \Re ((e^{-i\alpha_\gl}-1)dZ^*),
\qquad \alpha_\gl(0)=0,
\end{equation}
with $dZ^*=e^{-i \alpha_g} dZ$. The standard coupling argument
shows that the solution of the stochastic sine equation is
monotone in the drift term, so we get \ref{qi}.

Now  assume that $f\ge g\equiv 0$. Then by the above
$\alpha_f(t)\ge \alpha_{g}(t)=0$. Claim \ref{szelep}
follows by repeating this argument for the process after
the hitting time of $2k\pi$.

Assume $f\ge 0$, and let $F(t)=\int_0^tf(s) \, ds$. Then
$\alpha_f-F$ is a continuous local martingale which is uniformly
bounded below by $-\|f\|_1$. Thus it a.s.\ converges to a random
limit. So $\alpha$ also converges, but it can only converge to a
location where the noise term vanishes; we get \ref{qii}, and
\ref{qiib} also follows from \ref{qi}. Now \ref{qiii} follows from
 \begin{eqnarray*}
 2\pi \ev N_f-\|f\|_1
 =\ev \alpha_f(\infty)-F(\infty)
 &\le& \lim_{t\to\infty} (\ev \alpha_f(t)-F(t))
 \\ &=& \alpha_f(0)-F(0)=0,
 \end{eqnarray*}
where the inequality is by Fatou's Lemma.
By \ref{szelep}  the function
$t\mapsto \ftp{\alpha_f(t)}$ is nondecreasing, hence the above
inequality implies that $\ftp{\alpha_f(t)}$ is uniformly
integrable, and so is $\alpha_f(t)$. Thus $\alpha_f-F$ is a
uniformly integrable martingale and so $\ev
\alpha_f(\infty)=F(\infty)$, as required for \ref{qiv}.

 For general $f\in L^1_*$, monotonicity \ref{qi} gives
\begin{equation}\label{e alphafpm}
\alpha_{-f^-}\le \alpha_{f} \le \alpha_{f^+},
\end{equation}
where $x^+=\max(x,0)$ and $x^-=(-x)^+$. Now $\alpha_{-f^-}$ has
the same distribution as $-\alpha_{f^-}$. By the previous argument
$\alpha_{f^+}$ and $\alpha_{f^-}$ are uniformly integrable. Hence
$\alpha_f-F$ is a uniformly integrable martingale, and \ref{qii},
\ref{qiv} follow. Claim  \ref{qiib} also follows via \ref{qi}. We
take positive  and negative part of \eqref{e alphafpm} to get $
\alpha_f^+ \le \alpha_{f^+} $, and $ \alpha_f^-\le
-\alpha_{-f^-}$, where the latter has the same distribution as
$\alpha_{f^-}$. Taking limits and expectations gives
$$
\ev |\alpha_f(\infty)| = \ev \alpha_f(\infty)^+ + \ev
\alpha_f(\infty)^- \le \ev \alpha_{f^+}(\infty) + \ev
\alpha_{f^-}(\infty) = \|f\|_1
$$
which gives \ref{qiii}.

Returning to $f\ge 0$, Markov's inequality implies that
$\pr(N_f\ge a)\le\frac{1}{2\pi}\|f\|_1/a$. Stopping the process at
time $\tau$ when and if $\alpha$ hits $2\pi ka$ we note that
$$
\pr \left(N_f\ge (k+1) a\,\big|\,N_f\ge k a,\mathcal
F_\tau\right)\le \frac{1}{2\pi} \|f_\tau\|_1/a \le
\frac{1}{2\pi}\|f\|_1/a,$$ where $f_\tau$ is $f$ shifted to the
left by $\tau$. It follows that for integer $k$ we have
$$
\pr(N_f\ge ka)\le \big[\frac{1}{2\pi}\|f\|_1/a\big]^k
$$
For general $f\in L^1_*$, we consider the positive and negative
parts separately and use monotonicity \ref{qiib} to get \ref{qv}.
\end{proof}

\begin{remark}\label{r_proc}
The previous lemma shows that for a fixed $f\in L^1_*$ the random
function $N(\gl)$ is a.s.~finite, integer valued, monotone
increasing with stationary increments. Thus $N(\gl)$ is the
counting function of a translation invariant point process. Since
$f\mapsto \alpha_f$ and $f\mapsto -\alpha_{-f}$ have the same
distribution, the distribution of the point process is symmetric
with respect to reflections.
\end{remark}

\begin{corollary}
For any $f\in L^1_*$ the point process defined by $N(\gl)$ is a.s.\ simple.
\end{corollary}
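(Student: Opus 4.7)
The plan is to reduce simplicity to a quantitative two-point estimate on the counting function $N(\gl)$ using the increments property and the exponential tail bound that were already established.

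First I would observe that simplicity is equivalent to $N(\gl)-N(\gl^-)\le 1$ for every $\gl$ a.s., and that by monotonicity, if a multiple point lies in a compact interval $[A,B]$, then for any partition $A=\gl_0<\gl_1<\cdots<\gl_n=B$ some subinterval $(\gl_{i-1},\gl_i]$ must satisfy $N(\gl_i)-N(\gl_{i-1})\ge 2$. Thus it suffices to show that for each fixed $k\in\N$ one has $\pr(\exists\,\gl\in[-k,k]:N(\gl)-N(\gl^-)\ge 2)=0$, and by countable additivity this yields the result.

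Next I would apply property \ref{qqi} with the rescaled drifts $\gl_2 f$ and $\gl_1 f$ and pass to $t=\infty$ via \ref{qii}, obtaining that $N(\gl_2)-N(\gl_1)\stackrel{d}{=}N_{(\gl_2-\gl_1)f}$ whenever $\gl_1<\gl_2$. The exponential tail bound \ref{qv} applied with $a=1$ and $k=2$ then gives
\begin{equation}\nonumber
\pr\bigl(N(\gl_2)-N(\gl_1)\ge 2\bigr)\le 2\Bigl[\frac{(\gl_2-\gl_1)\|f\|_1}{2\pi}\Bigr]^{2}.
\end{equation}
This is the crucial quadratic-in-$(\gl_2-\gl_1)$ estimate.

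Finally I would partition $[-k,k]$ into $n$ equal subintervals of length $2k/n$, denote by $M_n$ the number of subintervals whose $N$-increment is at least $2$, and estimate
\begin{equation}\nonumber
\ev M_n\le n\cdot 2\Bigl[\frac{2k\|f\|_1}{2\pi n}\Bigr]^{2}=\frac{2k^{2}\|f\|_1^{2}}{\pi^{2}\,n}\longrightarrow 0.
\end{equation}
By Markov, $\pr(M_n\ge 1)\to 0$; since the event $\{\text{a multiple point lies in }[-k,k]\}$ is contained in $\{M_n\ge 1\}$ for every $n$, it has probability zero. No step should present a substantive obstacle; the one point needing care is making sure the quadratic (not merely linear) bound is extracted from \ref{qv}, which is exactly what allows the partition argument to close.
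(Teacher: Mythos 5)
Your proof is correct and follows essentially the same route as the paper's: both extract a quadratic-in-length bound from the exponential tail estimate (property \ref{qv}), partition a fixed interval into small pieces, and let the mesh go to zero via a union bound. The paper phrases the transfer between subintervals in terms of translation invariance, while you derive the same thing directly from property \ref{qqi} as the distributional identity $N(\gl_2)-N(\gl_1)\ed N_{(\gl_2-\gl_1)f}$; these are the same fact.
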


\begin{proof}
The tail estimate of the lemma implies that the probability that
there are two points or more in a fixed interval of length $\eps$ is at
most $c\eps^2$. Breaking the interval $[0,1]$ into pieces of
length $\eps$, and using translation invariance, we see that the
chance that there is a double point in $[0,1]$ is at most
$2c\eps$. Letting $\eps \to 0$ shows that a.s.~there are no double
points in $[0,1]$. The claim now follows from translation
invariance.
\end{proof}

Let $\mm$ denote the space of probability distributions on
$\ZZ$ with expectation. For  $\mu_1,\mu_2\in \mm$ let
$d(\mu_1,\mu_2)$ be the first Wasserstein distance,
i.e.~the infimum of $\ev |X_1-X_2|$ over all realizations
where the joint distribution of $(X_1,X_2)$ has marginals
$\mu_1$ and $\mu_2$. The topology induced by $d$ is
stronger than weak convergence of probability measures. Let
$\mathcal L(N_f)$ denote the distribution of $N_f$. The
following proposition is a stronger version of Proposition
\ref{p_continuity} in the introduction.

\begin{proposition}\label{l_12}
The map  $f\mapsto \mathcal L(N_f)$ is  Lipschitz-1 continuous in
$f$: for $f,g\in L^1_*$ we have $d(\mathcal L(N_f),\mathcal
L(N_g))\le \|f-g\|_1$.
\end{proposition}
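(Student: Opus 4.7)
The plan is to bound the Wasserstein distance by producing an explicit coupling of $N_f$ and $N_g$ and then invoking the $L^1$ estimate from Proposition \ref{l_sdeprop}. Since $d(\mathcal L(N_f),\mathcal L(N_g))\le \ev|N_f-N_g|$ for any joint realization of the two random variables, it suffices to exhibit a single good coupling.

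The natural choice is to drive the stochastic sine equations \eqref{e sse2} for $\alpha_f$ and $\alpha_g$ by the \emph{same} complex Brownian motion $Z$. Claim \ref{qqi} of Proposition \ref{l_sdeprop}, established via the identification $dZ^{*}=e^{-i\alpha_g}dZ$ (which is again a standard complex Brownian motion), shows that the process $\alpha_f-\alpha_g$ has the same law as the process $\alpha_{f-g}$. Since both processes converge a.s.\ as $t\to\infty$ by claim \ref{qii}, passing to the limit and dividing by $2\pi$ gives
\[
N_f-N_g \ed N_{f-g}.
\]
Applying the $L^1$ bound of claim \ref{qiii} to $f-g$ then yields
\[
\ev|N_f-N_g| \;=\; \ev|N_{f-g}| \;\le\; \frac{1}{2\pi}\|f-g\|_1 \;\le\; \|f-g\|_1,
\]
and infimizing over couplings finishes the argument.

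I do not anticipate any real obstacle; the proposition is essentially a corollary of parts \ref{qqi} and \ref{qiii} of Proposition \ref{l_sdeprop} via the synchronous coupling of the driving noises. The only non-cosmetic point is that the distributional identity between the processes $\alpha_f-\alpha_g$ and $\alpha_{f-g}$ must be pushed through the $t\to\infty$ limit, but this is immediate once one knows from claim \ref{qii} that both limits exist almost surely. It is worth noting in passing that this route actually delivers the sharper constant $1/(2\pi)$ in place of $1$, so the stated inequality has room to spare.
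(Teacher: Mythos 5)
Your proof is correct and follows essentially the same route as the paper's: both deduce the distributional identity $N_f-N_g\ed N_{f-g}$ from Proposition \ref{l_sdeprop}\ref{qqi} and then apply the $L^1$ bound to $f-g$. The only difference is cosmetic — you spell out the synchronous coupling and the passage to the $t\to\infty$ limit, steps the paper leaves implicit — and your observation that the argument actually yields the sharper constant $1/(2\pi)$ is accurate.
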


\begin{proof}

Proposition \ref{l_sdeprop} gives that $N_g-N_f$ has the
same distribution as  $N_{g-f}$ which implies
\begin{equation*}d(\mathcal L(N_f),\mathcal
L(N_g))\le \ev |N_g-N_f|=\ev |N_{g-f}|\le \|g-f\|_1.\qedhere\end{equation*}
\end{proof}
%
%
%

\subsection{Large gap probabilities} \label{s_gap}

\begin{theorem}\label{t_ggap}
Let $f:\R^+\to \R^+$  satisfy $f(t)\le c/(1+t^2)$ for all $t$ and
$\int_0^\infty |df|<\infty$. Let $k\ge 0$. As $\lambda \to
\infty$, for the point process given by the Brownian carousel with
parameter $f$ we have
\begin{eqnarray} \label{e_ggap}
\pr(\mbox{\# of points in }[0,\lambda]\le k) =
\exp\big(-\lambda^2( \|f\|_2^2/8+o(1))\big).
\end{eqnarray}
\end{theorem}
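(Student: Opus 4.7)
My plan is to carry out a large-deviation estimate for the stochastic sine equation via a Cameron--Martin/Girsanov shift whose $L^2$ cost equals the claimed rate $\lambda^2\|f\|_2^2/8$. First I would reduce to an SDE statement: by the general-$f$ version of Proposition~\ref{sseprop}, $N(\lambda)$ is distributed as $\alpha_\lambda(\infty)/(2\pi)$, where $\alpha_\lambda$ solves $d\alpha=\lambda f\,dt+2\sin(\alpha/2)\,dW$ with $\alpha_\lambda(0)=0$. Thus $\{\#\text{ points in }[0,\lambda]\le k\}=\bigcup_{j=0}^{k}\{\alpha_\lambda(\infty)=2\pi j\}$; since $k$ is fixed while $\lambda\to\infty$, a union bound reduces both bounds of~\eqref{e_ggap} to the case of the event $A=\{\alpha_\lambda(\infty)=0\}$.

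The heuristic driving the proof is: to prevent $\alpha$ from winding under the large drift $\lambda f$, the Brownian noise must cancel the drift. Since the noise coefficient $2\sin(\alpha/2)$ is maximal at $\alpha=\pi$, the cheapest Cameron--Martin shift pins $\alpha$ near $\pi$ throughout the interval where $f$ is significant and requires $dW\approx-(\lambda f/2)\,dt$; the $L^2$ cost of this shift is exactly $\tfrac12\int(\lambda f/2)^2\,dt=\lambda^2\|f\|_2^2/8$. Formally, introduce the shifted measure $Q$ via $dQ/dP=\exp(-\tfrac\lambda2\int f\,dW-\tfrac{\lambda^2}8\|f\|_2^2)$. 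Under $Q$, $\tilde W_t=W_t+\tfrac\lambda2\int_0^t f\,ds$ is Brownian motion and
\begin{equation*}
d\alpha=\lambda f\bigl(1-\sin(\alpha/2)\bigr)\,dt+2\sin(\alpha/2)\,d\tilde W,
\end{equation*}
a diffusion whose drift vanishes at $\alpha=\pi$.

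For the lower bound, I would exhibit a $Q$-event $E$ with $Q(E)\ge e^{-o(\lambda^2)}$ on which (i) $\alpha$ stays in a strip around $\pi$ during a bulk interval $[0,T_\lambda]$, (ii) $\alpha$ relaxes to $0$ on the tail $(T_\lambda,\infty)$ where $f$ is small and the $Q$-SDE is essentially a martingale with absorbing points $\{0,2\pi\}$, and (iii) $\int f\,d\tilde W=o(\lambda)$; on $E$, $dP/dQ\ge\exp(-\lambda^2\|f\|_2^2/8-o(\lambda^2))$, yielding the lower bound. For the upper bound, rewrite
\begin{equation*}
\pr(A)=e^{-\lambda^2\|f\|_2^2/8}\,\ev_Q\!\bigl[\one_A\,e^{(\lambda/2)\int f\,d\tilde W}\bigr]
\end{equation*}
and show the $Q$-expectation is $e^{o(\lambda^2)}$. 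The key input is the It\^o identity $2\pi N(\lambda)=\lambda\|f\|_1+2\int\sin(\alpha/2)\,dW$ (obtained by integrating the stochastic sine equation), which on $A$ forces $\int\sin(\alpha/2)\,dW=-\lambda\|f\|_1/2+O(1)$; one then converts this into a bound on $\int f\,d\tilde W$ via $Q$-concentration of $\alpha$ near $\pi$ together with the integration-by-parts bound $|\int_0^T f\,d\tilde W|\le\sup_{[0,T]}|\tilde W|\cdot\int_0^T|df|$ and the tail estimate $\int_T^\infty f^2\,dt\le c/T^3$ coming from $f(t)\le c/(1+t^2)$.

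The main obstacle is the quantitative control of $\alpha$ under $Q$ needed for both bounds: one must prove concentration of $\alpha$ near $\pi$ on the bulk interval $[0,T_\lambda]$ strong enough that $\sin(\alpha/2)$ is close to $1$ uniformly with $Q$-probability $1-o(1)$, together with the correct relaxation to the target absorbing point in the tail. The two hypotheses on $f$ are used precisely here: the decay $f(t)\le c/(1+t^2)$ controls the tail dynamics under both measures and the size of tail stochastic integrals, while the bounded variation $\int|df|<\infty$ enables the pathwise integration-by-parts comparison that translates $Q$-concentration of $\alpha$ into concentration of $\int f\,d\tilde W$.
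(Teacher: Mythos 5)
Your lower-bound strategy is close in spirit to the paper's, but your upper bound is a different approach and, as sketched, has a genuine gap.

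\paragraph{On the lower bound.} Both you and the paper use a Girsanov shift of cost $\frac12\int(\lambda f/2)^2 = \lambda^2\|f\|_2^2/8$. The paper does not shift the noise of $\alpha$ directly; instead it transforms to $R=\log\tan(\tilde\alpha/4)$ (whose noise coefficient is constant), dominates $R$ on a small interval $I^*$ by a Brownian motion with \emph{deterministic} drift $q(t)=(\tfrac12+c\eps)\lambda f(t)+c\eps$, and applies Girsanov to that simpler process, so the event ``$R^*$ stays in $I^*$'' becomes a Brownian-confinement event with a clean probability bound. Your version shifts $W$ directly; under your $Q$ the drift $\lambda f\,(1-\sin(\alpha/2))$ is everywhere $\ge0$, vanishing only at $\pi$, so $\pi$ is \emph{semi}-stable (attracting from below, repelling from above). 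Making ``$\alpha$ stays in a strip around $\pi$'' precise therefore needs the same kind of confinement estimate the paper gets for $R^*$; it is doable, but you lose the trick of reducing to a Brownian motion in a fixed interval, so the estimate is somewhat more delicate. The $\eps$ vs.\ $\lambda$ balance would come out the same.

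\paragraph{On the upper bound: a gap.} The paper's upper bound avoids Girsanov entirely: conditioning on $\FF_{s}$ and using a Gaussian tail bound for the stopped martingale (Lemma~\ref{gtail}), it bounds $\pr(\alpha(t)<x\,|\,\FF_s)\le \exp(-\lambda^2 r(s,t))$ and multiplies these over a mesh, getting the Riemann sum $\frac18\int f^2$. Your proposed upper bound rests on showing
\[
\ev_Q\bigl[\one_A\,e^{(\lambda/2)\int f\,d\tilde W}\bigr]=e^{o(\lambda^2)}.
\]
As written this step does not close. The identity $\pr(A)=e^{-\lambda^2\|f\|_2^2/8}\ev_Q[\one_A e^{(\lambda/2)\int f\,d\tilde W}]$ is a tautology of the change of measure, so \emph{all} the work of the upper bound is in this expectation, and the ingredients you list do not suffice. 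The constraint you extract from $A$, namely $\int\sin(\alpha/2)\,dW=-\lambda\|f\|_1/2+O(1)$, constrains one functional of the path, while $\int f\,d\tilde W$ is a different functional; ``$Q$-concentration of $\alpha$ near $\pi$'' is a probabilistic statement and cannot simply be substituted inside the expectation to identify the two. The pathwise integration-by-parts bound $|\int_0^T f\,d\tilde W|\le \sup_{[0,T]}|\tilde W|\int_0^T |df|+O(1)$ is true but useless here: $\sup_{[0,T_\lambda]}|\tilde W|$ has Gaussian tails of scale $\sqrt{T_\lambda}\sim\sqrt{\lambda}$, and $\ev_Q\exp\bigl((\lambda/2)c\sup|\tilde W|\bigr)\sim e^{c'\lambda^3}$, which is far larger than $e^{o(\lambda^2)}$. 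A Cauchy--Schwarz split is also too lossy: $\sqrt{Q(A)}\cdot e^{\lambda^2\|f\|_2^2/4}$ yields a bound of $e^{+\lambda^2\|f\|_2^2/8}\sqrt{Q(A)}$, which would require an a priori estimate $Q(A)\le e^{-\lambda^2\|f\|_2^2/2+o(\lambda^2)}$ that is itself a nontrivial large-deviation upper bound under $Q$. In short, your upper bound replaces one large-deviation upper bound with another of equal difficulty; a new idea is needed. The paper's elementary discretization sidesteps all of this.
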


\begin{lemma}\label{gtail} Let $Y$ be an adapted stochastic process with $|Y_t|<m$, and let
$X$ satisfy the SDE $dX=YdB$ where $B_t$ is a Brownian motion.
Then for each $a,t>0$ we have
$$
\pr(X(t)-X(0)\ge a) \le \exp\left(- a^2/(2 tm^2) \right).
$$
\end{lemma}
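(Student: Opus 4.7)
The plan is to use the standard exponential-martingale (Chernoff) approach for sub-Gaussian stochastic integrals. The single crucial observation is that the quadratic variation of $X$ is deterministically bounded: since $|Y_s| \le m$ we have $\langle X\rangle_t = \int_0^t Y_s^2\,ds \le tm^2$ for every $t\ge 0$. Everything then reduces to an optimization in one variable.

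Concretely, for each $\theta>0$ I would introduce the Dol\'eans exponential
$$
M_t^\theta \;=\; \exp\!\left(\theta(X_t-X_0) - \tfrac{\theta^2}{2}\int_0^t Y_s^2\,ds\right).
$$
By It\^o's formula $dM^\theta = \theta Y M^\theta\,dB$, so $M^\theta$ is a nonnegative local martingale. Because $\int_0^t Y_s^2\,ds$ is uniformly bounded by $tm^2$, Novikov's criterion holds trivially and $M^\theta$ is a genuine martingale, so $\ev M_t^\theta = M_0^\theta = 1$. Using the deterministic bound on the quadratic variation to move the exponential correction out of the expectation, this gives the sub-Gaussian moment bound
$$
\ev\exp(\theta(X_t-X_0)) \;\le\; \exp(\theta^2 tm^2/2).
$$

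From here the lemma is an immediate consequence of Markov's inequality:
$$
\pr(X_t-X_0\ge a) \;\le\; e^{-\theta a}\,\ev\exp(\theta(X_t-X_0)) \;\le\; \exp(-\theta a+\theta^2 tm^2/2).
$$
Optimizing by choosing $\theta = a/(tm^2)$ yields exactly $\exp(-a^2/(2tm^2))$. There is no real obstacle: the uniform bound $|Y|<m$ trivializes the integrability of the exponential martingale, and the rest is a one-line Chernoff calculation.
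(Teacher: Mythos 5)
Your proof is correct, but it takes a genuinely different (and equally standard) route from the one in the paper. The paper assumes $X(0)=0$ and appeals to the Dambis--Dubins--Schwarz time-change representation: since $X$ is a continuous local martingale, $X_t = B_\tau$ for the random clock $\tau = \int_0^t Y^2(s)\,ds$, and since $\tau < m^2 t$ deterministically, $X_t$ is dominated by $\sup_{r\le m^2 t} B_r$, whose tail is controlled by the reflection principle together with the one-sided Gaussian bound $\pr(B_r>a)\le \tfrac12\exp(-a^2/(2r))$. You instead build the Dol\'eans exponential $M^\theta_t$, verify it is a true martingale (Novikov is trivial here since $\langle X\rangle_t\le tm^2$ is bounded), extract the sub-Gaussian moment generating function bound $\ev\exp(\theta(X_t-X_0))\le\exp(\theta^2 tm^2/2)$, and finish with a Chernoff optimization over $\theta$. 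The two proofs are comparable in length and ultimately exploit the same fact, namely that the quadratic variation is bounded by $tm^2$. Your version is somewhat more self-contained and sidesteps the minor subtlety in the time-change argument of replacing the Brownian motion at the random time $\tau$ by its running supremum before applying the deterministic-time Gaussian tail; the paper's version is arguably more geometric and more in keeping with the time-change viewpoint used elsewhere in that section.
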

\begin{proof} We may assume $X(0)=0$.
Then $X_t=B_\tau$ where $\tau$ is the random time change
$\tau=\int_0^t Y^2(s)ds$. Since $\tau<m^2 t$ the inequality
now follows from
\begin{equation*}
\pr(B_r>a)\le \exp\left(- a^2/(2 r) \right).\qedhere
\end{equation*}
%
%
%
%
\end{proof}

\begin{proof}[Proof of Theorem \ref{t_ggap}] The event in
\eqref{e_ggap} is given in terms of the stochastic sine equation
as $\lim_{t\to\infty} \alpha_{\gl}(t)\le 2k\pi$. We will give
upper and lower bounds on its probability.

\bigskip

\noindent{\bf Upper bound.} By Proposition \ref{l_sdeprop}
\ref{szelep} it is enough to give an upper bound on the
probability that $\alpha$ stays less than $x=2(k+1)\pi$.
 For $0<s<t$ we have
\begin{eqnarray*}
\pr(\alpha(t)<x\,|\,\mathcal F_{s})  &=& \pr\left(-\int_s^t
2\sin(\alpha/2) dB \,> \,\lambda \int_s^t f dt - x
+\alpha(s)\,\Big|\,\mathcal F_{s}\right).
\end{eqnarray*}
We may drop the $\alpha(s)$ from the right hand side and use Lemma
\ref{gtail} with $Y=-2\sin(\alpha/2)$, $m=2$, $a=\lambda (\int_s^t f dt- x /\lambda)$ to get the upper bound
$$
\pr(\alpha(t)<x\,|\,\mathcal F_{s})\le \exp(-\lambda^2r(s,t)),\qquad
 r(s,t)=\frac{(\int_s^t f dt- x /\lambda)^2}{8(t-s)}.
$$
Then, by just requiring $\alpha(t)<x$ for times $\eps,
2\eps,\ldots \in[0,K]$ the probability that $\alpha$ stays less
than $x=2(k+1)\pi$ is bounded above by
$$
 \ev
\prod_{k=0}^{K/\eps} \pr(\alpha((k+1)\eps)<x\,\big|\,\mathcal
F_{k\eps})
 \le \exp\Big\{-\gl^2 \sum_{k=0}^{K/\eps} r(\eps k, \eps
k+\eps) \Big\}.
$$
A choice of $\eps$ so that  $ x /\lambda=o(\eps)$ as
$\lambda\to\infty$ yields
 the asymptotic Riemann sum 
$$
\sum_{k=0}^{K/\eps} r(\eps k, \eps k+\eps) =\frac{1}{8}
\int_0^K f^2(t)dt + o(1).
$$
Letting $K\to \infty$ provides the desired upper bound.

\smallskip

\noindent{\bf Lower bound.}
Consider the solution $\tilde \alpha(t)$ of \eqref{e sse3a} with
the same driving Brownian motion, but with initial condition
$\tilde \alpha(0)=\pi$. Then $\tilde \alpha\ge \alpha$. For
$\eps<\pi/4$, let $A_{s}$ be the event  that $\tilde\alpha(t)\in
(0,\pi + \eps]$ for  $t\in[0,s]$. Then
 $$
\pr(\alpha(\infty)<2\pi) \ge \pr(A_s)\sup_{y\in (0,\pi+\eps)}\pr
(\tilde \alpha(\infty)<2 \pi \,|\,\tilde \alpha(s)=y) .
 $$
 The sup
is bounded below via Markov's inequality by
$$
1- \frac{ \pi+\eps+ \gl \int_s^\infty
 f(t)dt}{2 \pi} \ge 1/4,
$$
where  the last inequality holds if  $s$ is set to be a large
constant multiple of $\lambda$. The event $A_s$ is equivalent to
$R=\log \tan(\tilde \alpha/4)$ staying in the interval
$I=(-\infty,\log\tan((\pi + \eps)/4)]$ where the evolution of $R$
is given by It\^o's formula as
 \begin{equation}\nonumber
 dR = \frac{\lambda}{2}\, f \cosh R\, dt + \frac{1}{2}\tanh R\, dt + dB, \quad R(0)=0.
 \end{equation}
Let $I^*=[-\eps,\log\tan((\pi + \eps)/4)]$, and consider a process
$R^*$ so that (i) the noise terms of $R$ and $R^*$ are the same
and (ii)
 the drift term
of $R^*$ at every time is greater than the spatial maximum over
$I^*$ of the drift term of $R$.  Let  $A^*_{s}$ denote the event
that for $t\in [0,s]$  we have  $R^*_t \in I^*$. On this event
$R^*\ge R$, and therefore $A_{s}$ also holds. With an appropriate
choice of $c$ we may set
$$q(t)=(1/2+c\eps) \lambda f(t)+c\eps, \qquad \qquad
dR^*=q(t) dt +dB.
$$
Let $A^*_{s}$ also denote the corresponding set of paths.
Girsanov's theorem gives
\begin{equation}\label{girs}
\pr(R^*\in A^*_{s})=\ev \left[\one( B\in
A^*_{s})\exp\left(-\frac{1}{2}\int_0^s q(t)^2 dt +\int_0^s
q(t)dB(t)\right)\right].
\end{equation}
Integration by parts transforms the second integral:
$$
q(s) B(s)- \int_0^s B(t) dq(t)\ge -c\eps \lambda
\big(f(s)+\int_0^\infty |df|\big)-c\eps\ge -c'\eps(1+\lambda)
$$
on the event $B\in A^*_s$. Here we also used that $f$ is bounded.
The probability of this event, i.e.\ that Brownian motion stays in
an interval of width $c\eps$, is  at least $\exp(-c' s/\eps^2)$.
In summary, \eqref{girs} is bounded below by
$$
\exp\left(-c\eps (1+\lambda) -c\,
s/\eps^2-(1/8+c\eps)\lambda^2\|f\|_2^2\right).
$$
The choice $s=c\lambda$, $\eps=\lambda^{-\nicefrac{1}{3}}$ gives
the desired lower bound.
\end{proof}

\subsection{A phase transition at $\beta=2$}

The goal of this section is to prove Theorem \ref{t_pt} that at
$\beta=2$ there is a phase transition in the behavior of the
stochastic sine equation.

As $\alpha$ converges to an integer multiple of $2\pi$ and
it can never go below an integer multiple of $2\pi$ that it
has passed (Proposition \ref{l_sdeprop} \ref{szelep}),
eventually  it must converge either from above or from
below. Theorem \ref{t_pt} says that $\alpha$ converges from
above with probability $1$ if and only if $\beta \le 2$.
Otherwise, it converges from below with positive
probability.
\begin{proof}
\emph{Case $\beta\le 2$.} It suffices to prove that if
$\alpha_\gl(t_0)\in (2\pi k-y,2\pi k)$ with $0<y<\pi$ then $\alpha_\gl(t)$ leaves
this interval a.s.~in finite time. As $\alpha_\gl(t+t_0)$ also
evolves according to the stochastic sine equation with $\gl'=\gl
e^{-\beta t_0/4}$, we may set $t_0=0$ and we are also free
to set $k=1$.
Let $\mathcal B$ denote the event that the process $\alpha_\gl$
started at $\alpha_\gl(0)=x$ in $(2\pi-y,2\pi)$ will stay in this
interval forever. It suffices to show that $\mathcal B$ has zero
probability.

Consider $R=\log \tan(\alpha_\gl/4)$ and set $y$ so
that $\log \tan((2\pi-y)/4)=1$. While
$\alpha_\gl\in(0,2\pi)$,  It\^o's formula gives the evolution of the process $R$:
 \begin{equation}\label{dR} R(0)=r_0>1, \qquad dR = q(R,t)dt+ dB, \qquad q(r,t)=\frac{ \lambda}{2}
\,\cosh r \,e^{-\beta t/4} + \frac{1}{2}\tanh r
 \end{equation}
Then $\mathcal{B}$ is the event that $R(t)\in(1,\infty)$ for all
$t$. On $\mathcal{B}$ we have
\begin{equation}\nonumber
q(R,t)>\frac12 \tanh R\ge \frac12-e^{-2R}\ge 1/4.
\end{equation}
which gives  $R(t)\ge t/4-B(t)+r_0$ from \eqref{dR}. Set
\begin{equation}\label{e_QQQ}
Q(t)=\int_0^t \frac{\tanh(R)-1}{2}ds,
\end{equation}
by the previous inequality, on the event $\mathcal B$ we have
$$
Q(t)\ge -\int_0^t e^{-t/2+2B(s)-2r_0}ds>-\int_0^\infty
e^{-t/2+2B(s)-2r_0}ds=-M.
$$
where the  integral $M$ is  a.s.~finite. Let
\begin{equation}\label{e_LLL}
L(t)=R(t)-t/2-B(t)-Q(t).
\end{equation}
Then on $\mathcal B$ we have
\begin{eqnarray}\label{e_LLL1}
L'(t)&=&\frac{ \lambda}{2}
\cosh(L(t)+t/2+B(t)+Q(t))e^{-\beta t/4}\\
&\ge& \frac{ \lambda}{4}
\exp\left[L(t)+B(t)+t(1/2-\beta/4)-M\right].\label{23}
\end{eqnarray}
The equation follows from It\^o's formula and the inequality uses
$\cosh r\ge e^r/2$. Multiplying \eqref{23}  by $e^{-L}$ and
integrating we get that on the event $\mathcal B$
\begin{equation}\nonumber
e^{-L(0)}-e^{-L(t)}\ge C \int_0^{t}
\exp\left[B(s)+s(1/2-\beta/4)\right] ds.
\end{equation}
with a random  $0<C<\infty$. As the exponent is
a Brownian motion with nonnegative drift,  the limit of the
integral on the right is a.s.~infinite, thus the probability of
$\mathcal B$ is 0.

\smallskip

\noindent \emph{Case $\beta< 2$.} It suffices to prove that for a large $t_0$ if  $\alpha_\gl(t_0)\in (2\pi -\eps,2\pi)$ then $\alpha_\gl(t)$ stays in the slightly larger interval $(2\pi -\delta,2\pi)$ with positive probability. Choosing the values of $\eps$ and $\delta$ appropriately it suffices to show that if $R(0)>2$ and $\gl$ is small enough then the event $\mathcal B$ that  $R\in(1,\infty)$ for $t\ge 1$ has positive probability.

Recall the definition of $Q$ and $L$ from (\ref{e_QQQ}) and (\ref{e_LLL}). On the event $\mathcal B$ we have
$$
1/4\le \tanh(R) \le 1/2, \quad\textup{and}\quad
-t/4 \le Q(t)\le 0.
$$
Using this with (\ref{e_LLL1}) and the fact that for $r$
nonnegative $ \cosh r\le e^{r}$ we get
\begin{eqnarray}\label{fst}
L'(t)&\le&  \frac{
\lambda}{2}\exp\left[L(t)+B(t)+t(1/2-\beta/4)\right].
\end{eqnarray}
From (\ref{fst}) we get
\begin{equation*}
e^{-L(0)}-e^{-L(t)}\le \frac{\gl}{2} \int_0^{t} \exp\left[B(s)+s(1/2-\beta/4)\right] ds.
\end{equation*}
Let $M^*$ denote the above integral for  $t=\infty$. Then
$M^*$ is almost surely finite. Moreover, $L(t)$ and thus
$R(t)$ remain finite if
\begin{equation}\label{integral}
M^* < 2 e^{-L(0)}/\gl.
\end{equation}
From (\ref{e_LLL1}) we get $L'(t)>0$ and $L(t)> L(0)=2$ which gives
$$R(t)>2+B(t)+t/2+Q(t)\ge2+B(t)+t/4.$$
So $R(t)$ stays above 1 if
\begin{equation}\label{BM event}
B(t)\ge -t/4-1 \qquad \mbox{ for all }t.
\end{equation}
This has positive probability, so the conditional
distribution of and $M^*$ given \eqref{BM event} is
supported on finite numbers. This means that the
intersection of the events \eqref{BM event} and
\eqref{integral} holds with positive probability for a
sufficiently small choice of $\lambda$, and it implies
$\mathcal B$.
\end{proof}

\section{Breakdown of the proof of Theorem 1}\label{s_proof}

The goal of this section is to divide the proof of the main
theorem into independent pieces, which in turn will be proved in
the later sections. The proof presented here also serves as an
outline of the later sections.

\begin{proof}[Proof of Theorem \ref{t_main}]
Fix $\beta>0$, and consider the $n\times n$ random tridiagonal
matrix
\begin{equation}
M(n)=\frac{1}{\sqrt{\beta}}\left(
  \begin{array}{cccc}
    \mathcal N_0 & \chi_{(n-1)\beta} &   &   \\
    \chi_{(n-1)\beta} & \mathcal N_{1} & \chi_{(n-2)\beta} &  \\
     & \chi_{(n-2)\beta} & \mathcal N_{2} & \ddots \\
     &  & \ddots & \ddots
  \end{array}
\right)\label{e_originalmatrix}.
\end{equation}
where the $\chi_{j\,\beta}$ and $\mathcal N_j$ entries are
independent, $\mathcal N_j$ has normal distribution with
mean 0 and variance 2, and $\chi_{j\beta}$ has chi
distribution with $j\beta $ degrees of freedom. (For
integer values of its parameter, $\chi_d$ is the length of
a $d$-dimensional vector with independent standard normal
entries.) We let $\Lambda_n$ be the multi-set of
eigenvalues of this matrix, which by \cite{DE} has the
desired distribution \eqref{betadens}.

First, we may assume that $\mu_n\ge0$; indeed, for a tridiagonal
matrix, changing the sign of all diagonal elements changes the
spectrum to its negative. In our case the diagonal elements have
symmetric distributions, and by Remark \ref{r_proc} the limiting
$\Sineb$ process is also symmetric.

We set
\begin{equation}
n_0=n_0(n)=n- \nnn-\frac12.\nonumber
\end{equation}
The assumption $n^{\nicefrac{1}{6}}(2\sqrt{n}-|\mu_n|)\to
\infty$ implies
$$
n_0^{-1} \nn^{\nicefrac{2}{3}}\to 0, \qquad
\frac{4n-\mu_n^2}{4 n_0} \to 1.
$$
So it suffices to show that
\begin{equation}
 \textup{if $n_0^{-1}  \nn^{\nicefrac{2}{3}}\to 0$ as
$n\to \infty,$\quad then\quad }2 n_0^{\nicefrac{1}{2}}(\Lambda_n-\nn) \Rightarrow
\Sineb,\label{e_limit}
\end{equation}
an equivalent version of the claim which makes computations nicer. Recall that the
counting function $N(\gl)$ of a set of points in $\RR$ is the
number of points in $(0,\lambda]$ for $\lambda\ge 0$ or negative
the number of points in $(\lambda,0]$ for $\lambda <0$.

Denote the counting function of the random multiset $ 2
n_0^{\nicefrac{1}{2}}(\Lambda_n-\nn)$ by
$N_n(\gl)$, and that of $\Sineb$ by $N(\gl)$. Claim
(\ref{e_limit}) follows if for every  $d\ge 1$ and $(\gl_1,
\gl_2, \ldots, \gl_d)\in \RR^d$ we have
\begin{equation}\nonumber
\big(N_n(\gl_1),N_n(\gl_2),\ldots, N_n(\gl_d)\big)\cd
\big(N(\gl_1),N(\gl_2),\ldots, N(\gl_d)\big).
\end{equation}
The proof of this consists of several steps, these are  verified
in detail in the subsequent sections with the help of the
Appendix.

Consider the one-parameter family of SDEs defining the $\Sineb$
process:
\begin{equation}\label{e_ssesde}
d\tilde \alpha_\gl=   \gl\frac{\beta}{4} e^{-\beta t/4} dt
+ \Re ((e^{-i\tilde \alpha_\gl}-1)dZ),
\end{equation}
where $Z$ is complex Brownian motion on $[0,\infty)$ with
standard real and imaginary parts. The time-change $t\to
-\frac2\beta \log(1-t)$ transforms \eqref{e_ssesde} to
\begin{equation}
 2 \sqrt{\beta(1-t)} \,d\alpha_\lambda = \lambda
\beta^{\nicefrac{1}{2}} dt+ 2\sqrt{2} \Re(
(e^{-i\alpha_\lambda}-1)dW)\label{e_SDE_alpha},
\end{equation}
where $W_t$ is complex Brownian motion for  $t\in[0,1)$
with standard real and imaginary parts. Proposition
\ref{l_sdeprop} of Section \ref{s_carousel} shows that the
counting function $N(\lambda)$ of the process $\Sineb$ can
be represented as the right-continuous version of
$(2\pi)^{-1}\lim_{t\to\infty} \tilde \alpha_{\gl}(t)$, a
limit which exists for every $\lambda\in \RR$ a.s. This
gives
 \begin{step}\label{st_sde} For every $\lambda \in \RR$, a.s.
we have  $2\pi N(\gl)=\lim_{\eps \to 0^{+}} \alpha_{\gl}
(1-\eps)$.
\end{step}
The eigenvalue equation for a tridiagonal matrix gives a
three-term recursion for the eigenvector entries. This can
be solved for any value of $\lambda$, but the boundary
condition given by the last equation is only satisfied for
eigenvalues.

The ratios of consecutive eigenvector entries $r_{\ell,\lambda}$
evolve via transformations of the form $r\mapsto a-b/r$, $b>0$.
These transformations are isometries of the Poincare\'e half plane
model of the hyperbolic plane. The hyperbolic framework is
introduced in Section \ref{s_hyp} for the study of these
recursions. In particular,  $r_{\ell,\lambda}$ moves on the
boundary of the hyperbolic plane which can be represented as a
circle, eigenvalues can be counted by tracking the winding number of $r_{\ell,\gl}$ as a function of $\gl$.
The rough phase function $\hat \varphi_{\ell, \lambda}$
(introduced in Section \ref{s_model}) transforms $r_{\ell,\gl}$
to an angle through
$2\arctan(r_{\ell,\lambda})$. Taking always the appropriate inverse of $\tan$
we get a continuous function of $\lambda$ taking values in
$\RR$, the universal cover of the circle.

Our goal is to take limits of the evolution of $\hat
\varphi_{\ell,\lambda}$. Since it has fast oscillations  first it needs to be regularized.
In order to remove the oscillations, we  follow a shifted
version of the hyperbolic angle of $r_{\ell,\lambda}$ around the
fixed point of a simplified version of the transformation
$r\mapsto a-b/r$.
As we will see later, the important part of the
evolution takes place  in the interval $0\le \ell \le \lfloor n_0
\rfloor$, which is exactly when this transformation is a hyperbolic rotation.

The precise regularization is done in Sections \ref{s_svp};
there we introduce the (regularized) phase function
$\varphi_{\ell,\gl}$ and target phase function
$\tph_{\ell,\gl}$ with parameters $0\le \ell \le \lfloor
n_0 \rfloor$ and $\gl\in \RR$. These correspond to solving
the eigenvalue equations   starting from the two ends, $1$
and $n$. As $n\to\infty$, these two parts will require
completely different treatment, so it is natural to break
the evolution into two parts this way. Proposition
\ref{p_fi}  shows how we can count the eigenvalues using
the zeroes of these phase functions mod $2\pi$; this is a
discrete analogue of the Sturm-Liouville oscillation
theory.

Let $\#A$ denote the number of
elements of $A$.

\begin{step}\label{st_phi}
For $\ell=1,2,\ldots ,\lfloor n_0\rfloor$, the function
$\varphi_{\ell,\gl}$ is monotone increasing, and is independent of
$\tph_{\ell,\gl}$. For any $\lambda<\lambda'$ and $1\le \ell\le \lfloor n_0\rfloor $ almost surely we
have
\begin{equation}\label{e_Ndif}
N_{n}(\lambda')-N_{n}(\lambda)=\# \left(
(\varphi_{\ell,\lambda}-\tph_{\ell,\lambda},\varphi_{\ell,\lambda'}-\tph_{\ell,\lambda'}]
\cap 2\pi \Z    \right).
\end{equation}
\end{step}

Since $\hat \varphi$ counts all eigenvalues below a certain
level, its regularized version $\varphi$ will encode all
the fluctuations in the number of such eigenvalues. So the
continuum limit of $\varphi$ is expected to have large
oscillations as its time-parameter converges to $\infty$.
In order to deal with this problem, we introduce the  {\bf
relative phase function}
$\alpha_{\ell,\gl}=\varphi_{\ell,\gl}-\varphi_{\ell,0}$.
This is related to the number of eigenvalues in an
interval, so it is expected that its scaling limit will
have nice behavior at $+\infty$.  Note that
$\alpha_{\ell,\gl}$ has the same sign as $\gl$  by Step
\ref{st_phi}. Let
\begin{equation}\nonumber
m_1=\lfloor n_0 (1-\eps)  \rfloor, 
\quad m_2=\lfloor n- \nnn- \kappa\, (\nn^{\nicefrac{2}{3}}
\vee 1)  \rfloor,
\end{equation}
where the constants $\eps, \kappa>0$ will be specified later in a
way that the chain of inequalities $0\le m_1\le m_2$ holds.

Next we will
describe the limiting behavior of $\varphi_{\ell,\gl}$ and
$\alpha_{\ell,\gl}$ when $\ell$ is in the intervals $[0,m_1]$ and
$[m_1,m_2]$, respectively. This is the content of the next three
steps.
Section \ref{s_sstep} studies the behavior of the relative phase function on  $[0,m_1]$.
In Corollary \ref{cor_sdelim}  we will
prove that $\alpha_{\ell,\gl}$ converges
to the SDE (\ref{e_SDE_alpha})  in this stretch.
\begin{step}\label{st_elso}
For every $0<\eps \le1$
\begin{equation}
\alpha_{m_1,\gl}\cd \alpha_{\gl} (1-\eps), \qquad \textup{as $n\to
\infty$}
\end{equation}\nonumber
in the sense of finite dimensional distributions for $\gl$.
\end{step}
Proposition \ref{l_middlestretch} of Section \ref{s_middle}
shows that $\alpha_{\ell,\gl}$ does not change much in the
second stretch, if it is already close to 0 mod $2\pi$ at the beginning of the stretch.
\begin{step}\label{st_masodik}
 There exists constants $c_0, c_1$ depending
only on $\bar \gl$ and $\beta$ such that if $\kappa=\kappa_n>c_0$,
$\gl\le
|\bar \gl|$ 
then
\begin{equation}\label{e_masodik} \ev \left[\left|(
\alpha_{m_1,\gl}-\alpha_{m_2,\gl})\right| \wedge 1\right] \le c_1
(\ev \,\dist(\alpha_{m_1,\gl},2\pi
\Z)+\eps^{\nicefrac{1}{2}}+n_0^{\nicefrac{-1}{2}}
( \nn^{\nicefrac{1}{3}}\vee 1)+\kappa^{-1}),
\end{equation}
\end{step}
In Proposition \ref{p_unif} of Section \ref{s_unif} we show that
$\varphi_{m_2, 0}$ becomes uniform mod $2\pi$.
\begin{step}\label{st_harmadik}
If $\kappa\to \infty$ and $n_0^{-1} \kappa
(\nn^{\nicefrac{2}{3}}\vee 1)\to 0$ then
\begin{equation*}
\frtp{\varphi_{m_2,0}}\cd \textup{Uniform}[0,2\pi],
\end{equation*}
where $\frtp{x}=\min_{k\in \ZZ, k\le x} (x-2\pi k)$.
\end{step}
Finally, in Lemma \ref{l_short} of Section \ref{s_end} we show
that nothing interesting happens after $m_2$.
\begin{step}\label{st_end}
For every fixed $\kappa>0$ and  $\gl\in \RR$
\begin{equation}\nonumber
\left| \tph_{m_2,\gl}-\tph_{m_2,0} \right|\cp 0, \qquad \textup{as
$n\to \infty$}.
\end{equation}
\end{step}
In a metric space, if $\lim_{n\to\infty}x_{n,k}=x_k$ for
every $k$ and also $\lim_{k\to\infty} x_k=x$ then we can
find a subsequence $n(k)\to\infty$ for which
$\lim_{n\to\infty} x_{n,n(k)}=x$. This simple fact,
together with the previous steps,  allows us to choose
sequences $\eps=\eps_n\to 0$, $\kappa=\kappa_n\to \infty$
in a way that the following limits hold simultaneously:
\begin{eqnarray}
(\alpha_{m_1,\gl_i},i=1,\ldots,d)&\cd& 2\pi
(N(\lambda_i),i=1,\ldots, d), \label{ittajel}
\\
\frtp{\varphi_{m_2,0}}&\cd& \textup{Uniform}[0,2\pi] \label{e_xyz}
\\ \left| \tph_{m_2,\gl_i}-\tph_{m_2,0} \right|&\cp&  0, \qquad i=1,\ldots, d
\label{nostep6}
\end{eqnarray}
Since $\dist(\cdot,2\pi \Z)$ is a bounded continuous
function, (\ref{ittajel}) implies that the right hand side
of (\ref{e_masodik}) vanishes in the limit and so
\begin{eqnarray}
\alpha_{m_1,\gl_i}-\alpha_{m_2,\gl_i} &\cp& 0 ,\qquad i=1,\ldots,
d\label{ittajel2}.
\end{eqnarray}
By (\ref{ittajel}) and (\ref{ittajel2}) the completion of
the proof only requires the following last step. Let
$\rtp{x}$ denote the element of $2\pi \Z$ in
$[x-\pi,x+\pi)$.
\begin{step} For $i=1,\ldots,d$ and $\lambda=\lambda_i$ we have
$ \lim_{n\to\infty}\pr\left(2\pi N_{n}(\gl)= \rtp{\alpha_{m_2,\gl}}
\right)= 1.$\label{e_vege}
\end{step}
We conclude by the proof of Step \ref{e_vege}. We suggest
skipping it at the first reading, as it is the most
technical part of this outline. We include it here because
it uses too much of the notation and assumptions of the
preceding discussion.

We will assume $\gl>0$, the other case follows similarly.
Then $0\le \rtp{\alpha_{m_2,\lambda}}\in 2\pi \ZZ$, for any
$x\in \RR$ we have
\begin{equation}\nonumber
\rtp{\alpha_{m_2,\lambda}}=2\pi\,\#\big(\left(x,x+\rtp{\alpha_{m_2,\lambda}}\right]\cap
2\pi \Z\big)
\end{equation}
Using this with
$x=\varphi_{m_2,\gl}-\tph_{m_2,\gl}-\rtp{\alpha_{m_2,\lambda}}$ we
get
\begin{equation}
\rtp{\alpha_{m_2,\lambda}}
=2\pi\,\#\Big(\left(\varphi_{m_2,0}-\tph_{m_2,\gl}+\alpha_{m_2,\gl}-\rtp{\alpha_{m_2,\gl}},
\varphi_{m_2,\gl}-\tph_{m_2,\gl} \right]\cap 2\pi
\Z\Big).\label{e_asd}
\end{equation}
The symmetric difference between  the intervals in \eqref{e_asd}
and \eqref{e_Ndif} is an interval $J$ with endpoints
$\varphi_{m_2,0}-\tph_{m_2,0}$ and
$\varphi_{m_2,0}-\tph_{m_2,\gl}+\alpha_{m_2,\gl}-\rtp{\alpha_{m_2,\gl}}$.
So it suffices to show
 \be\label{e_vege1_}
\lim_{n\to\infty}\pr\left((J \cap 2\pi \ZZ)=\emptyset
\right)= 1.\ee We will show that the length of $J$
converges to 0 while one of its endpoints becomes uniformly
distributed mod $2\pi$. First,
$$
|J|\le \left|\alpha_{m_2,\gl}-
\rtp{\alpha_{m_2,\gl}}\right| +\left| \tph_{m_2,\gl}
-\tph_{m_2,0} \right|\cp 0,
$$
where the convergence of the first term follows from
(\ref{ittajel}, \ref{ittajel2}) as $\alpha_{m_2,\lambda}$
converges to an element of $2\pi \ZZ$; the convergence of the
second term is  \eqref{nostep6}. Also, since $\varphi_{m_2,0}$ and
$\tph_{m_2,0}$ are independent, from (\ref{e_xyz}) we have
$\frtp{\varphi_{m_2,0}-\tph_{m_2,0}}\cd
\operatorname{Uniform}[0,2\pi]$. Equation (\ref{e_vege1_}), Step
\ref{e_vege} and the theorem follows.
\end{proof}

\begin{proof}[Proof of Corollary \ref{maincor}]
Note that weak convergence of point processes is metrizable. Let
$a_i \to \infty$. For every $i$, we can find $n_i>i$ so that the
point process
$$\Lambda^*_{i}=2\sqrt{a_i}\,n_i^{\nicefrac{1}{6}}(\Lambda_{n_i}-2\sqrt{n_i}+a_i
{n_i}^{\nicefrac{-1}{6}})$$ is $1/i$-close to $2
\sqrt{a_i}(\Airyb+a_i)$ by Theorem \ref{t_rrv}. By Theorem
\ref{t_main}, $\Lambda^*_{i}$ converges to $\Sineb$.
\end{proof}

\section{The hyperbolic description of the phase evolution}

\subsection{The hyperbolic point of view}\label{s_hyp}

The eigenvector equation for a tridiagonal matrix gives a
three-term recursion in which each step is of the form
$u_{\ell+1}=b u_{\ell}-a u_{\ell-1}$, in our case with $a > 0$.
Let $\PSL$ denote the group of linear fractional transformations
preserving the upper half plane $\HH$ and its orientation. Then
$r_{\ell}=u_{\ell+1}/u_{\ell}$ evolves by elements of $\PSL$ of
the form $r\mapsto b-a/r$.

We will think of $\HH$ as the Poincar\'e half-plane model for the
hyperbolic plane; it is equivalent to the Poincar\'e disk model
$\UU$ via the bijection
$$\varupsl{U}:\bar \HH \to \bar
\UU, \qquad z\mapsto \frac{i-z}{i+z},
$$
which is also a bijection of the boundaries. Thus $\PSL$ acts
naturally on $\bar \UU$, the closed unit disk. As $r$ moves on the boundary $\partial
\HH\equiv \RR\cup \{\infty\}$, its image under $\UU$ will move
along $\partial \UU$.

 In order to follow the number of times this
image circles $\UU$, we would like to extend the action of $\PSL$
from $\partial \UU$ to its universal cover, $\RR'\equiv \RR$,
where we use prime to distinguish this from $\partial \HH$.
This action is uniquely determined up to shifts by $2\pi$, but
here we have a choice. For each choice, we get an element of a
larger group $\UPSL$ defined via its action on $\RR'$. $\UPSL$
still acts on $\bar \HH$ and $\bar \UU$ just like $\PSL$, and for
$\varupsl{T}\in \UPSL$ the three actions are denoted by
$$
 \bar \HH \to \bar \HH: z\mapsto z.\varupsl{T},
 \qquad \bar \UU \to \bar \UU:z\mapsto z \lcirc \varupsl{T},\qquad
 \R'\to \R': z\mapsto z \lstar \varupsl{T}.
$$
We note in passing that the topological group $\UPSL$ is
the universal cover of the hyperbolic motion group $\PSL$,
and $\PSL$ is a quotient of $\UPSL$ by the infinite cyclic
normal subgroup generated by the $2\pi$-shift on $\R'$. For
every $\varupsl{T}\in\UPSL$ the function $x\mapsto x\lstar \varupsl{T}$ is
strictly increasing, analytic and quasiperiodic, i.e.
$(x+2\pi)\lstar \varupsl{T}=x\lstar \varupsl{T}+2\pi$.

Given an element $\varupsl{T}\in \UPSL$, $x,y\in \RR'$, we define the
angular shift
$$
\ash_{\R'}(\varupsl{T},x,y)=(y\lstar \varupsl{T}-x \lstar \varupsl{T})-(y-x)
$$
i.e.\ the amount the signed distance of $x,y$ changed over the
transformation $\varupsl{T}$. This only
depends on the image of $\varupsl{T}$ in $\PSL$ and the images
$v=e^{ix},w=e^{iy}\in
\partial \UU$ of $x,y$ under the covering map.
This allows us to define $\ash(\varupsl{T},v,w)$; more concretely,
$$
 \ash(\varupsl{T},v,w)=
 \ash_{\R'}(\varupsl{T},x,y) =
 \Arg_{[0,2\pi)}(w\lcirc \varupsl{T}/v\lcirc \varupsl{T})
  -\Arg_{[0,2\pi)}(w/v),
 $$
where the last equality has self-evident notation and is
straightforward to check. Note  also that the above formula
defines $\ash(\varupsl{T},v,w)$ for $\varupsl{T}\in  \PSL$, $v,w\in \partial \UU$ as
well. For explicit computations, we will rely on the following
fact, whose proof is given in Appendix \ref{a_ash}.

\begin{fact}[Angular Shift Identity]\label{f_ashid}
Let $\varupsl{T}\in \PSL$ be a M\"obius transformation  and $v,w\in
\partial \UU$; let $\sigma=0\lcirc \varupsl{T}^{-1}$. Then
\begin{equation}\label{f_ashidentity}
\ash(\varupsl{T},v,w)=2\Arg\left(\frac{(w-\sigma)v}{w(v-\sigma)}\right)=2
\Arg\left(\frac{1-\sigma \bar w}{1-\sigma \bar v}   \right).
\end{equation}
\end{fact}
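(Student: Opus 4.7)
The plan is to verify the Angular Shift Identity by direct computation, using the standard parametrization of $\PSL$ acting on the disk. First I would write any element of $\PSL$ acting on $\UU$ in the form
$$z\lcirc \varupsl{T} = e^{i\theta}\,\frac{z-\sigma}{1-\bar\sigma z}, \qquad \sigma \in \UU,\ \theta\in\RR,$$
and observe that setting $z\lcirc\varupsl{T}=0$ gives $z=\sigma$, which identifies $\sigma$ with $0\lcirc\varupsl{T}^{-1}$ as required. Then I would compute the ratio
$$\frac{w\lcirc \varupsl{T}}{v\lcirc \varupsl{T}} = \frac{(w-\sigma)(1-\bar\sigma v)}{(v-\sigma)(1-\bar\sigma w)},$$
where the rotation factor $e^{i\theta}$ cancels out, which is the reason the right-hand side of the identity depends only on $\sigma$.

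Next I would exploit the boundary condition $|v|=|w|=1$, i.e.\ $\bar v = 1/v$ and $\bar w=1/w$, to rewrite
$$1 - \bar\sigma w = w\,(\bar w - \bar\sigma) = w\,\overline{w-\sigma},$$
and similarly for $v$. Substituting into the ratio, the complex numbers $\overline{w-\sigma}$ and $w-\sigma$ combine into unit-modulus factors whose arguments are $\pm 2\Arg(w-\sigma)$. Taking arguments then gives
$$\Arg\!\frac{w\lcirc\varupsl{T}}{v\lcirc\varupsl{T}} \equiv 2\Arg(w-\sigma) - 2\Arg(v-\sigma) + \Arg(v)-\Arg(w) \pmod{2\pi},$$
and subtracting $\Arg(w/v) = \Arg(w)-\Arg(v)$ from both sides produces exactly the first expression $2\Arg\bigl(\tfrac{(w-\sigma)v}{w(v-\sigma)}\bigr)$.

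For the second equality, I would just use $|v|=|w|=1$ once more to note that $1-\sigma\bar w = (w-\sigma)/w$ and $1-\sigma\bar v = (v-\sigma)/v$, which immediately gives
$$\frac{1-\sigma\bar w}{1-\sigma\bar v} = \frac{(w-\sigma)v}{w(v-\sigma)}$$
as complex numbers, so the arguments agree automatically. The only genuine subtlety—and the main thing I would need to be careful about—is the $2\pi$ branch issue: the formulas naturally produce answers modulo $2\pi$, whereas $\ash$ is defined through $\Arg_{[0,2\pi)}$, so a short bookkeeping step is needed to check that the two sides live in the same $2\pi$-window (or, equivalently, that the identity should be interpreted modulo $2\pi$, which is harmless in subsequent applications where $\ash$ enters through angular quantities). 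Beyond this, every step is a routine algebraic manipulation.
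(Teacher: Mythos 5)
Your computation correctly establishes the identity modulo $2\pi$, and up to that point it matches the paper's algebraic manipulation: write the transformation as $z\lcirc \varupsl{T}=e^{i\theta}(z-\sigma)/(1-\bar\sigma z)$, observe the rotation cancels in the ratio $w\lcirc\varupsl{T}/v\lcirc\varupsl{T}$, and use $|v|=|w|=1$ to simplify. The genuine gap is the step you flag but do not close: upgrading the mod-$2\pi$ equality to an exact one. There is no simple ``bookkeeping'' fix here, because as $\varupsl{T}$ ranges over $\PSL$ both sides can wander through several $2\pi$-intervals, and your parenthetical suggestion that a mod-$2\pi$ version would suffice for later applications is not safe: Lemma~\ref{l_ash} starts from the exact form $\ash(\varupsl{T},v,w)=2\Arg(1+x)$ and Taylor-expands it to get bounds of the form $|\ep_d|\le c\,|w-v|\,|z|^d$, which would be false if $\ash$ differed from the right-hand side of \eqref{f_ashidentity} by a nonzero multiple of $2\pi$ for small $|z|$.

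The paper closes the gap with a continuity argument rather than bookkeeping. The quantity $\ash(\varupsl{T},v,w)$ is continuous in $\varupsl{T}$ (hence in $\sigma$) directly from its definition via the lifted action on $\R'$. On the other side, for $|\sigma|<1$ and $|v|=|w|=1$ both $1-\sigma\bar w$ and $1-\sigma\bar v$ have strictly positive real part, so their ratio stays away from the branch cut of $\Arg$ and $2\Arg\bigl(\frac{1-\sigma\bar w}{1-\sigma\bar v}\bigr)$ is a continuous function of $\sigma$. Two continuous functions that agree modulo $2\pi$ for all $|\sigma|<1$ and agree exactly at $\sigma=0$ (where both are $0$) must agree everywhere. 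Adding this one observation would make your proof complete and essentially identical to the paper's.
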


Next, we specify generators for $\UPSL$. Let $\varupsl{Q}(\alpha)$ denote
the rotation by $\alpha$ in $\UU$ about $0$, more precisely, the
shift by $\alpha$ on $\RR'$:
 \begin{equation}\varphi\lstar \varupsl{Q}(\alpha)=\varphi+\alpha \label{d_Q1}
 \end{equation}
For $a,b\in\RR$ let $\varupsl{A}(a,b)$ be the affine map $z\mapsto a(z+b)$
in $\HH$. If $a>0$ then this is in $\PSL$, it fixes the $\infty$
in $\partial \HH$ and $-1$ in $\partial \UU$. We specify the
action of $\varupsl{A}$ on $\RR'$ by making it fix $\pi\in \RR'$. Then we
have
\begin{equation}\label{e_lift}
 \varphi\lstar \varupsl{A}(a,b)=\varphi+\ash(\varupsl{A}(a,b),-1,e^{i \varphi}).
\end{equation}
The following lemma estimates the angular shift. The proof
is given in Appendix \ref{a_ash}.
\begin{lemma}\label{l_ash}
Suppose  that for a $\varupsl{T}\in \UPSL$ we have $(i+z).\varupsl{T}=i$ with
$|z|\le \nicefrac13$. Then
 \begin{equation} \label{e_ash}
 \begin{array}{rcl}
 \ash(\varupsl{T},v,w)&=&\Re\left[(\bar w -\bar v)\left(-z-\frac{i(2+\bar v + \bar w)}{4}\,z^2\right)
 \right]+\ep_3\\[5pt]
  &=&-\Re\left[(\bar w -\bar v) z\right]+\ep_2\\
  &=&\ep_1,
  \end{array}
  \end{equation}
where for $d=1,2,3$ and an absolute constant $c$ we have
\begin{eqnarray}
|\ep_d| \le c |w-v| |z|^d\le 2c |z|^d,\label{e_asherr}
  \end{eqnarray}
If $v=-1$ then the previous bounds hold even in the case
$|z|>\nicefrac13$.
\end{lemma}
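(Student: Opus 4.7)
The key tool is Fact \ref{f_ashid}, which expresses
$$\ash(\varupsl{T},v,w) = 2\Arg\frac{1-\sigma\bar w}{1-\sigma\bar v}, \qquad \sigma := 0\lcirc\varupsl{T}^{-1}.$$
My plan is to determine $\sigma$ explicitly in terms of $z$, Taylor expand in $\sigma$, and substitute to read off \eqref{e_ash}.

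Applying the bijection $\varupsl{U}$ (which sends $i\mapsto 0$) to the identity $(i+z).\varupsl{T}=i$ yields $\sigma=\varupsl{U}(i+z)=-z/(2i+z)$. For $|z|\le 1/3$ this gives $|\sigma|\le 1/5$ and the geometric-series expansion
$$\sigma = \tfrac{iz}{2}-\tfrac{z^2}{4}+O(z^3), \qquad \sigma^2 = -\tfrac{z^2}{4}+O(z^3).$$
Since $|\sigma\bar v|,|\sigma\bar w|<1$, the principal logarithm applies:
$$\log\frac{1-\sigma\bar w}{1-\sigma\bar v} = -\sum_{k\ge 1}\frac{\sigma^k}{k}(\bar w^k-\bar v^k) = -(\bar w-\bar v)\Bigl[\sigma+\tfrac{\sigma^2}{2}(\bar w+\bar v)+O(|\sigma|^3)\Bigr],$$
where the factoring uses $\bar w^k-\bar v^k=(\bar w-\bar v)\sum_{j<k}\bar w^j\bar v^{k-1-j}$. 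Taking $2\Im$ and rewriting $\Im\xi=\Re(-i\xi)$ produces
$$\ash(\varupsl{T},v,w)=\Re\bigl[(\bar w-\bar v)\bigl(2i\sigma+i\sigma^2(\bar w+\bar v)\bigr)\bigr]+\ep_3.$$
Substituting the $\sigma$-expansion and collecting powers of $z$ gives the three claimed expressions, and $|\ep_d|\le c|w-v||z|^d$ follows from the geometric tail $|\bar w-\bar v|\sum_{k\ge d}|\sigma|^k/k$ combined with $|\sigma|\le 3|z|/5$.

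For the case $v=-1$ with possibly $|z|>1/3$, I switch to the alternate form of Fact \ref{f_ashid}, $\ash(\varupsl{T},-1,w)=2\Arg\frac{w-\sigma}{w(1+\sigma)}$. Substituting $\sigma=-z/(2i+z)$ and using $|w|=1$, so that $(w+1)/w=1+\bar w$, direct algebra gives $\frac{w-\sigma}{w(1+\sigma)}=1+u$ with $u=-iz(1+\bar w)/2$. If $|u|\le 1/2$, expanding $\log(1+u)$ in powers of $u$ and computing $2\Im u=-\Re[(\bar w+1)z]$ etc.\ reproduces the three expressions in \eqref{e_ash} with $\bar v=-1$, and the same geometric tail bounds the errors. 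If $|u|>1/2$, then $|w+1||z|>1$, and each of the three bounds is trivial: using $|\ash|\le 2\pi$ and that the polynomial parts have modulus at most $C|w+1|(1+|z|+|z|^2)$, one checks $|\ep_d|\le 2\pi+C|w+1|\sum_{j<d}|z|^j\le c|w+1||z|^d$ by invoking $|z|>1/3$ and $|w+1||z|>1$.

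The main obstacle is the bookkeeping in the central case: the coefficient $-i(2+\bar v+\bar w)/4$ of $z^2$ must be verified as the sum of (a) the contribution $i\sigma^2(\bar w+\bar v)\sim -iz^2(\bar w+\bar v)/4$ from the quadratic term of the log expansion and (b) the contribution $-iz^2/2$ arising from the $-z^2/4$ term of $\sigma$ inside $2i\sigma$. A secondary, essentially routine concern is the absolute-constant tracking in the $v=-1$, $|u|>1/2$ subcase, where $|z|$ may be arbitrarily large but $|\ash|$ stays bounded by $2\pi$.
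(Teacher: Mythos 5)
Your argument is correct and rests on the same key tool as the paper's, namely Fact~\ref{f_ashid} together with $\sigma = -z/(2i+z)$, followed by a Taylor expansion of the complex logarithm. The main difference is in execution: the paper writes $\ash=2\Arg(1+x)$ with the single rational function $x=\frac{z(\bar w-\bar v)}{2i+z(1+\bar v)}$, Taylor-expands $h_{v,w}(z)=\tfrac{2}{i}\log(1+x)$ \emph{directly in $z$}, and bounds the error $\eta_{v,w}$ by computing an explicit $h'''_{v,w}$; you instead perform a composite expansion (first in $\sigma$, extracting the factor $\bar w-\bar v$ from $\bar w^k-\bar v^k$, then substituting the $z$-series of $\sigma$), and bound tails geometrically. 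Both routes produce the coefficient $-\tfrac{i(2+\bar v+\bar w)}{4}$ and the claimed error magnitudes. One phrasing in your error analysis is slightly loose: for $d=3$ the quantity $\ep_3$ is not captured purely by the $\sum_{k\ge 3}$ tail of the log series, because you have also truncated the $z$-expansions of $\sigma$ and $\sigma^2$ inside the $k=1,2$ partial sums; those residuals are likewise $O(|w-v||z|^3)$ (using $|\sigma|\le 3|z|/5$), but they need to be mentioned. For the $v=-1$, $|z|>\nicefrac13$ case you take a more detailed path than the paper: you pass to the other form $2\Arg\frac{w-\sigma}{w(1+\sigma)}=2\Arg(1+u)$ with $u=-iz(1+\bar w)/2$ and split on $|u|\lessgtr\nicefrac12$, redoing the expansion in the small-$|u|$ subcase. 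The paper avoids this split entirely by using $|\Arg(1+x)|\le\pi|x|$ for all $x$, giving $|\ash|\le c|w+1||z|$ at once and combining it with the trivial bound on the polynomial main terms; your two-case treatment is correct but does redundant work, and it is worth noting that $|u|>\nicefrac12$ already forces $|z|>\nicefrac12$, so your invocation of $|z|>\nicefrac13$ there is automatically satisfied.
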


\subsection{Phase evolution equations}\label{s_model}

The eigenvalue equation of a tridiagonal matrix can be
solved recursively. The goal of this section is to analyze
this recursion in terms of phase functions.

We conjugate the matrix $M=M(n)$ in
\eqref{e_originalmatrix} by a diagonal matrix $D$ with
$$
D_{ii}=D(n)_{ii}=\prod_{\ell=1}^{i}\frac{\chi_{(n-\ell)\beta}}{\sqrt{\beta}\,
s_{\ell}},\qquad \mbox{where}\qquad
s_j=\sqrt{n-j-\nicefrac12}. $$ We get the tridiagonal
matrix $M^D=D^{-1}MD$ given by
\begin{equation}
\frac{1}{\sqrt{\beta}}\left(
  \begin{array}{cccc}
    \mathcal N_0& \frac{\chi_{(n-1)\beta}^2}{  s_{1}\sqrt{\beta}} &   &   \\
      s_{1}\sqrt{\beta} & \mathcal N_{1} & \frac{\chi_{(n-2)\beta}^2}{  s_{2}\sqrt{\beta}} &  \\
     &   s_{2}\sqrt{\beta} & \mathcal N_{2} & \ddots \\
     &  & \ddots & \ddots
  \end{array}
\right)=\left(
  \begin{array}{cccc}
      X_0 &   s_0+  Y_0 &   &   \\
      s_{1} &   X_{1} &   s_{1}+  Y_{1} &  \\
     &   s_{2} &   X_{2} & \ddots \\
     &  & \ddots & \ddots
\end{array}
\right).\label{e_matrix}
\end{equation}
Then $M^D$ and $M$ have the same eigenvalues, but $M^D$ has the
property that the eigenvalue equations are independent. (A similar
conjugation appears in \cite{ES}.) The moments of the independent
random variables
\begin{eqnarray}
  X_j=\frac{\mathcal N_{j}}{\sqrt{\beta}}, \qquad
Y_j=\frac{\chi_{(n-j-1)\beta}^2}{\beta s_{j+1}}-s_{j},\quad 0\le j\le n-1 \nonumber
\end{eqnarray}
are explicitly computable via the moment generating functions for
the $\Gamma$ distribution.

  Our proof is valid for any choice of independent real-valued random
variables $X_j,Y_j$ satisfying the following asymptotic
moment conditions. $X_j$ and $Y_j$ may also depend on $n$,
in which case the implicit error terms are assumed to be
uniform in $n$. 
\begin{equation}
\begin{array}{c|c|c|c}
  \mbox{moment} & 1^{\textup{st}} & 2^{\textup{nd}} & 4^{\textup{th}} \\ \hline
   & \O((n-j)^{\nicefrac{-3}{2}}) & 2/\beta+\O((n-j)^{-1}) & \O(1)
\end{array}\label{e_mom}
\end{equation}
Let $u_\ell=u_{\ell,\Lambda}$ ($1\le \ell \le n$) be a
non-trivial solution of the first $n-1$ components of the eigenvalue equation
with a given spectral parameter $\Lambda$, i.e.~
  \begin{equation} \label{e_vrec}
 \begin{array}{c}
    s_{\ell}u_{\ell}+  X_{\ell}\,
  u_{\ell+1}+(  Y_{\ell}+  s_{\ell})
 u_{\ell+2}= \Lambda
  u_{\ell+1},\qquad 0\le \ell \le n-2\\
  u_0=0, \qquad u_1=1.
  \end{array}
  \ee
Then
with $ r_{\ell}=r_{\ell,\Lambda}=u_{\ell+1}/u_{\ell}$ we have
  \be\label{e_rrec}
  r_{\ell+1}
  =\left(-\frac{1}{r_{\ell}}+\frac{\Lambda}{  s_{\ell}}
  -\frac{  X_{\ell}}{  s_{\ell}}\right)\left(1+\frac{  Y_{\ell}}{  s_{\ell}}\right)^{-1}\\[4pt]
  ,\qquad
   0\le \ell \le n-2
    \ee
  This also holds for
$r_\ell=0$ or  $r_\ell=\infty$; in fact, the initial value of
the recursion is $r_0=\infty$. If we set $Y_{n-1}=0$ and define  $r_n$ via the $\ell=n-1$ case of  \eqref{e_rrec}, then
$\Lambda$ is an eigenvalue if and only if $r_n=0$.

%

We will use the point of view and notation introduced in Section
\ref{s_hyp}. Namely, $r$ takes values in $\partial \HH=\RR\cup
\{\infty\}$, the boundary of the hyperbolic plane. Moreover, the
evolution of $r$ can be lifted to the universal cover of $\partial
\HH$. The extra information there allows us to count eigenvalues,
as the following proposition shows. The proposition also
summarizes the evolution of $r$ and its lifting $\hat \varphi \in
\RR'$. We note that
this is just a discrete analogue of
the Sturm-Liouville oscillation theory suitable for our
purposes; such analogues are available in the literature.
Although we state this proposition in our setting, a
trivial modification holds for the eigenvalues of general
tridiagonal matrices with positive off-diagonal terms.

%

%


\begin{proposition}[Wild phase function]\label{p_wild}
There exist functions $\hat \varphi, \tphh: \{0,1,\ldots,
n\}\times \R\to\R$ satisfying the following:
\begin{enumerate}
\item $r_{\ell,\Lambda}.\varupsl{U}=e^{i \hat \varphi_{\ell,\Lambda}}$,

\item $\hat \varphi_{0,\Lambda}=\pi$, $\tphh_{n,\Lambda}=0$.

\item For each $0<\ell\le n$, $\hat \varphi_{\ell,\Lambda}$
is analytic and strictly increasing in $\Lambda$. For $0\le \ell
<n$, $\tphh_{\ell,\Lambda}$ is analytic and strictly decreasing in
$\Lambda$.

\item  For any \, $0\le \ell \le n$, $\Lambda$ is an
eigenvalue of $M$ if and only if $\hat
\varphi_{\ell,\Lambda}-\tphh_{\ell,\Lambda}\in 2\pi \ZZ$.
\end{enumerate}
\end{proposition}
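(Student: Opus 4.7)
I would approach Proposition \ref{p_wild} by recognizing the recursion \eqref{e_rrec} as a sequence of M\"obius transformations acting on $\partial \HH$, and lifting them to $\UPSL$ acting on the universal cover $\R'$ of $\partial \UU$ in a canonical, analytic-in-$\Lambda$ way. Concretely, \eqref{e_rrec} can be written as $r_{\ell+1} = (\Lambda - X_\ell)/(s_\ell+Y_\ell) - a_\ell/r_\ell$ with $a_\ell = s_\ell/(s_\ell+Y_\ell)$, which factors as
\[
\varupsl{T}_{\ell,\Lambda} \;=\; \varupsl{Q}(\pi)\cdot\varupsl{A}\!\left(a_\ell,\,\frac{\Lambda-X_\ell}{s_\ell}\right),
\]
using the generators introduced in Section \ref{s_hyp}: the inversion $r\mapsto -1/r$ is rotation by $\pi$ about $0\in\UU$, and the subsequent step is affine in $\HH$ (in $\PSL$ on the a.s.\ event that $s_\ell+Y_\ell>0$). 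Since $\varupsl{Q}(\pi)$ and $\varupsl{A}$ have the canonical lifts \eqref{d_Q1} and \eqref{e_lift}, this yields a unique $\varupsl{T}_{\ell,\Lambda}\in\UPSL$ depending analytically on $\Lambda$. One extends the recursion to $\ell=n-1$ by the convention $Y_{n-1}=0$.

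I would then set $\hat\varphi_{0,\Lambda}=\pi$ (matching $\UU(r_0)=\UU(\infty)=e^{i\pi}$) and $\hat\varphi_{\ell+1,\Lambda}=\hat\varphi_{\ell,\Lambda}\lstar\varupsl{T}_{\ell,\Lambda}$, and $\tphh_{n,\Lambda}=0$ with the backward definition $\tphh_{\ell,\Lambda}=\tphh_{\ell+1,\Lambda}\lstar\varupsl{T}_{\ell,\Lambda}^{-1}$. Parts (1) and (2) then hold by construction. For part (3) I proceed by induction on $\ell$: every element of $\UPSL$ acts on $\R'$ as a strictly increasing analytic bijection, and for fixed $\phi\in\R'$ the map $\Lambda\mapsto\phi\lstar\varupsl{T}_{\ell,\Lambda}$ is strictly increasing, since only the translation parameter $(\Lambda-X_\ell)/s_\ell$ in $\varupsl{A}$ depends on $\Lambda$ (linearly with positive slope), and increasing this parameter translates the image on $\partial\UU$ counterclockwise and hence upward on $\R'$. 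The chain rule then gives $\partial_\Lambda\hat\varphi_{\ell+1,\Lambda}>0$ from $\partial_\Lambda\hat\varphi_{\ell,\Lambda}\ge 0$; the base case $\ell=1$ is direct since $r_{1,\Lambda}=(\Lambda-X_0)/(s_0+Y_0)$ is strictly increasing in $\Lambda$. For $\tphh$, the analogous backward induction uses that $\Lambda\mapsto\phi\lstar\varupsl{T}_{\ell,\Lambda}^{-1}$ is strictly decreasing at fixed $\phi$ (by implicit differentiation of $\phi\lstar\varupsl{T}_{\ell,\Lambda}=\psi$), which combined with the inductive decrease of $\tphh_{\ell+1,\Lambda}$ yields $\partial_\Lambda\tphh_{\ell,\Lambda}<0$.

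For part (4) I would start at $\ell=n$: with $Y_{n-1}=0$, $\Lambda$ is an eigenvalue of $M$ iff the last boundary equation $u_{n+1}=0$ holds, iff $r_n=0$, iff $e^{i\hat\varphi_{n,\Lambda}}=\UU(0)=1$, iff $\hat\varphi_{n,\Lambda}\in 2\pi\Z$, and since $\tphh_{n,\Lambda}=0$ this is the desired statement at $\ell=n$. To extend to $\ell<n$, note that $\tphh$ also satisfies the forward recursion $\tphh_{\ell+1,\Lambda}=\tphh_{\ell,\Lambda}\lstar\varupsl{T}_{\ell,\Lambda}$ (by inverting the backward definition). Because $\varupsl{T}_{\ell,\Lambda}\in\UPSL$ projects to a well-defined element of $\PSL$, its action on $\R'$ commutes with the deck transformation $\varphi\mapsto\varphi+2\pi$; hence membership of $\hat\varphi_{\ell,\Lambda}-\tphh_{\ell,\Lambda}$ in $2\pi\Z$ is preserved under one step of the forward recursion (and so under all steps), reducing (4) at arbitrary $\ell$ to the case $\ell=n$ handled above.

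The main subtlety I anticipate is the bookkeeping around the choice of lifts to $\UPSL$: one must verify that the decomposition $\varupsl{Q}(\pi)\cdot\varupsl{A}$ picks the right lift, so that it correctly tracks the cumulative winding of $r_{\ell,\Lambda}$ around $\partial\UU$ as $\Lambda$ sweeps across $\R$, and that the forward ($\hat\varphi$) and backward ($\tphh$) constructions remain compatible. Once the canonical lifts from Section \ref{s_hyp} are invoked, analyticity and monotonicity reduce to chain-rule and implicit-function computations within $\UPSL$, and the eigenvalue characterization is purely group-theoretic.
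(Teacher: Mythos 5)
Your proof is correct and follows essentially the same route as the paper's: lift the one-step M\"obius recursion to $\UPSL$ via $\varupsl{Q}(\pi)$ and an affine factor, define $\hat\varphi$ forward and $\tphh$ backward on $\RR'$, then derive monotonicity and the eigenvalue characterization from the fact that every lift acts on $\RR'$ as a strictly increasing $2\pi$-quasiperiodic map. The only cosmetic difference is that you fold the paper's two affine factors $\varupsl{A}(1,\Lambda/s_\ell)\,\varupsl{W}_\ell$ into a single $\varupsl{A}(a_\ell,(\Lambda-X_\ell)/s_\ell)$, whereas the paper keeps them separate because that split (isolating the $\Lambda$-dependent, deterministic, and random parts) is reused repeatedly in later sections.
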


\begin{proof}

We consider the following elements of the universal cover $\UPSL$
of the hyperbolic motion group $\PSL$:
\begin{eqnarray}\label{d_QW}
 \varupsl{Q}(\pi),\qquad
 \varupsl{W}_j&=&
               \varupsl{A}((1+Y_j/s_j)^{-1},-X_j/s_j)  \qquad  0\le j\le n-1
\end{eqnarray}
where $Q$  corresponds to a rotation in the model $\UU$, and
$A$ corresponds to an affine map in the model $\HH$,
as defined in (\ref{d_Q1}-\ref{e_lift}).
With this
notation, the evolution \eqref{e_rrec} of $r$  becomes
\begin{equation}\label{e 1step}
\begin{array}{rcl}
\varupsl{R}_{\ell,\Lambda}&=&  \varupsl{Q}(\pi)\,
\varupsl{A}(1,\Lambda/  s_{\ell})\, \varupsl{W}_{ \ell}, \\
 r_{\ell+1}&=& r_{\ell}. \varupsl{R}_{\ell,\Lambda},\end{array}
\end{equation}
for $0\le \ell \le n-1$, and $\Lambda$ is an eigenvalue if and
only if $\infty.\varupsl{R}_{0,\Lambda}\cdots \varupsl{R}_{n-1,\Lambda}=0$.
Multiplying this by $(\varupsl{R}_{\ell,\Lambda}\cdots \varupsl{R}_{n-1,\Lambda})^{-1}$ for some $0\le \ell\le n$ and then moving
to the universal cover $\RR'$ of $\partial \HH$ gives the
equivalent characterization $\hat
\varphi_{\ell,\Lambda}=\tphh_{\ell,\Lambda}$ mod $2\pi$, where
\begin{equation}\label{e_1stepfi}
\hat \varphi_{\ell,\Lambda}=
 \pi\lstar \varupsl{R}_{0,\Lambda}\cdots \varupsl{R}_{\ell-1,\Lambda}
 ,\qquad
 \tphh_{\ell,\Lambda}=0\lstar
 \varupsl{R}_{n-1,\Lambda}^{-1}\cdots \varupsl{R}_{\ell,\Lambda}^{-1},
\end{equation}
which is exactly { (iv)}. Claims { (i)-(ii)} follow from
the definition.

As $\varphi_{0,\Lambda}=\pi$, one readily checks that
$\varphi_{1,\Lambda}$ is  strictly increasing. Since
$(\varphi,\Lambda)\mapsto \varphi\lstar \varupsl{R}_{\ell,\Lambda}$
are nondecreasing analytic  functions in both parameters
and so are their compositions, the statement of claim {
(iii)}  for $\hat \varphi_{\ell,\Lambda}$ now follows. The
same proof works for $\tphh$.
\end{proof}

Motivated by part (iv) of the proposition, we call $\tphh$
the {\bf target phase function}.

\subsection{Slowly varying phase evolution for a scaling window}

\label{s_svp}

For scaling, we set
$$
s(\tau)=s^{(n)}(\tau)=\sqrt{1-\tau-\nicefrac{1}{2n}}
$$
so that we have $  s_{\ell}=s(\ell/n)\sqrt{n}$. Making $s$
depend on $n$ via the $\nicefrac{1}{2n}$ term helps make
 the upcoming formulas exact rather than only asymptotic.

The phase function $\hat \varphi_\ell$ introduced in the
previous section exhibits fast oscillations in $\ell$. In
this section we will extract a slowly moving component of
the phase evolution whose limiting behavior can be
identified. The oscillations of $\hat \varphi_\ell$ are
caused by the macroscopic  term $\varupsl{Q}(\pi)\,
\varupsl{A}(1,\Lambda/  s_{\ell})$ of the evolution
operator  $\varupsl{R}_{\ell,\Lambda}$. The recursion
(\ref{e 1step}) has different behavior depending on whether
this macroscopic
 part is a rotation or not. As we will see later, the
continuum limit process comes from the stretch
$0\leq \ell <n_0$
where it is a
rotation (this is because the corresponding eigenvectors will be
localized there). The  eigenvalues
$\Lambda$ of interest will be near the scaling window $\mu_n$, so
we define the main part of the evolution operator as the macroscopic part of $\varupsl{R}_{\ell,\mu_n}$, that is
\begin{equation}\label{e_Gamma}
\varupsl{J}_{\ell}=\varupsl{Q}(\pi)\, \varupsl{A}(1,\mu_n/
s_{\ell})=\varupsl{Q}(\pi)\varupsl{A}\Big(1,\frac{\mu_n}{\sqrt{n}s(\ell/n)}\Big).
\end{equation}
This is a rotation if it  has a
fixed point $\rho_\ell$ in the open upper half plane $\HH$, the fixed point equation  $\rho_\ell.\varupsl{J}_{\ell}=\rho_\ell$ turns into
\begin{equation}
\rho_\ell^2-2\,\frac{\nn/\sqrt{4n}}{s(\ell/n) }
\rho_\ell+1=0\label{e_crho}.
\end{equation}
Note that $\nn/\sqrt{ 4n}$ is the relative location of
the scaling window in the Wigner semicircle supported on $[-1,1]$.
Since $s(\tau)$ is decreasing, we have that $\rho_\ell\in \HH\,$ for
$\tau<\idiota$, where $s(\idiota)=\nn/\sqrt{ 4n}$. This explains the
choice of the parameter $n_0$.

Thus $\rho_\ell=\rho(n_0/n,\ell/n)$, where $\rho(\tau_1,\tau_2)$
is the solution in the closed upper half plane of
\begin{equation}
\rho^2-2\,\frac{s(\tau_1)}{s(\tau_2) } \rho+1=0,\qquad  \mbox{
i.e. }\qquad
\rho(\tau_1,\tau_2)=\frac{s(\tau_1)}{s(\tau_2)}+i\sqrt{1-\frac{s(\tau_1)^2}{s(\tau_2)^2}}.
\label{e_crho2}
\end{equation}
More specifically,
\be\label{e_rhorho}
\rho_\ell=\sqrt{\frac{ \nnn}{ \nnn+n_0-\ell}}+i
\sqrt{\frac{n_0-\ell}{ \nnn+n_0-\ell}}.
\ee

Because of our choice of scaling window and the density in the
Wigner semicircle law it is natural to choose the scaling
(\ref{e_limit}) by setting
\begin{equation}
\Lambda=\nn+\frac{\gl}{2\sqrt{n_0}}\label{e_scalingnu}.
\end{equation}
We recycle the notation $u_{\ell,\gl}, r_{\ell,\gl},
\hat\varphi_{\ell,\gl}, \tphh_{\ell,\gl}$ for the quantities
$u_{\ell,\Lambda}, r_{\ell,\Lambda}, \hat\varphi_{\ell,\Lambda},
\tphh_{\ell,\Lambda}$. We separate $\varupsl{J}_\ell$  from the evolution operator $\varupsl{R}$ to get:
\begin{equation}\label{e_evoevo}
\varupsl{R}_{\ell,\lambda}= \varupsl{J}_{\ell}
\varupsl{L}_{ \ell,\gl} \varupsl{W}_{ \ell},\qquad
\varupsl{L}_{ \ell,\gl}\;=\;
\varupsl{A}\Big(1,\frac{\gl}{2s(\ell/n)\sqrt{n_0n}}\Big).
\end{equation}
Note that $\varupsl{L}_{ \ell,\gl}$ and $\varupsl{W}_{
\ell}$ become infinitesimal in the $n\to\infty$ limit while
$\varupsl{J}_{\ell}$ does not.
%
$\varupsl{J}_{\ell}$  is a hyperbolic rotation,
differentiating $z\mapsto z.\varupsl{J}_{\ell}$ at $z=\rho_\ell$\ shows
the angle to be $-2\Arg(\rho_\ell)\in [-\pi,0]$. Let
$$\varupsl{T}_\ell=\varupsl{A}(\Im(\rho_\ell)^{-1},-\Re(\rho_\ell))$$
correspond to the affine  map sending $\rho_\ell\in \HH$ to
$i\in \HH$, then we may write
 \be\nonumber
 \varupsl{J}_{\ell} = \varupsl{Q}(-2\Arg(\rho_\ell))^{\varupsl{T}_\ell^{-1}},
 \ee
 where
$A^B=B^{-1} A B$. Rather than following $\hat \varphi$ itself, it
will be more convenient to follow a version which is shifted so
that the fixed point $\rho_\ell$ of the rough evolution is shifted
to $i$. Moreover, in order to follow a slowly changing angle, we
remove the cumulative effect of the macroscopic rotations
$\varupsl{J}_{\ell}$. Essentially, we study the ``difference'' between
the phase evolution of the random recursion and the version with
the noise and $\lambda$ terms removed. The quantity to follow is
\begin{equation}
\varphi_{\ell,\lambda}=\hat \varphi_{\ell,\lambda} \lstar
\varupsl{T}_\ell \varupsl{Q}_{\ell-1}, \label{d_tilder}
\end{equation}
where
\begin{equation}\nonumber
\varupsl{Q}_{\ell}=\varupsl{Q}(2\Arg(\rho_0))\ldots
\varupsl{Q}(2\Arg(\rho_{\ell})),\qquad -1\le \ell\le n_0.
\end{equation}
Acting on $\UU$,  $\varupsl{Q}_\ell$ is simply a rotation about 0,
more precisely a multiplication by
\begin{equation}\label{d_eta}
\eta_\ell=\rho_0^{2} \rho_1^{2}\ldots\rho_\ell^{2}.
\end{equation}
From (\ref{e_1stepfi}) and (\ref{d_tilder}) we get that $\varphi$
evolves by the one-step operator
\begin{eqnarray}
(\varupsl{T}_\ell \varupsl{Q}_{\ell-1})^{-1}
\varupsl{R}_{\ell,\lambda}
(\varupsl{T}_{\ell+1}\varupsl{Q}_{\ell})&=&
(\varupsl{T}_\ell^{-1} \varupsl{L}_{ \ell} \varupsl{W}_{
\ell}
\varupsl{T}_{\ell+1})^{\varupsl{Q}_{\ell}}:=(\varupsl{S}_{\ell,\gl})^{\varupsl{Q}_{\ell}}.
\nonumber
\end{eqnarray}
We keep this ``conjugated'' notation because
$\varupsl{S}_{\ell,\lambda}$ corresponds to an affine transformation.

For $\ell\le n_0$  we define the corresponding target phase
function
 \be\label{e_target}
\tph_{\ell,\gl}=\tphh_{\ell,\gl}\lstar \varupsl{T}_\ell \varupsl{Q}_{\ell-1}.
\ee The following summarizes our findings and translates
the results of Proposition \ref{p_wild} to this setting.
Here and in the sequel we use the difference notation
$\Delta x_\ell=x_{\ell+1}-x_\ell$.

\begin{proposition}[Slowly varying phase function]\label{p_fi}
The functions $\varphi, \tph:\{0,1,\ldots,\lfloor
n_0\rfloor \}\times \RR\to\RR$ satisfy the following for
every $0<\ell\le n_0$:
\begin{enumerate}
\item $\varphi_{0,\lambda}=\pi$ \item $ \varphi_{\ell,\lambda}$
and $-\tph_{\ell,\gl}$ are analytic and strictly increasing
in $\lambda$, and are also independent. \item With
$\varupsl{S}_{\ell,\gl}=\varupsl{T}_\ell^{-1} \varupsl{L}_{
\ell} \varupsl{W}_{ \ell} \varupsl{T}_{\ell+1}$, we have
$\Delta
\varphi_{\ell,\gl}=\ash(\varupsl{S}_{\ell,\gl},-1,e^{i\varphi_{\ell,\gl}}\bar
\eta_{\ell})$. \label{e_phievo}
\item 
$ \hat \varphi_{\ell,\gl}=\varphi_{\ell,\gl}\lstar
\varupsl{Q}_{\ell-1}^{-1} \varupsl{T}_\ell^{-1}$.
  \item  For any $\lambda<\lambda'$ we have a.s.\ $N_{n,\lambda'}-N_{n,\lambda}=\# \left(
(\varphi_{\ell,\lambda}-\tph_{\ell,\lambda},\varphi_{\ell,\lambda'}-\tph_{\ell,\lambda'}]
\cap 2\pi \Z\right)$.
\end{enumerate}
\end{proposition}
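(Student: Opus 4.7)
The plan is to derive all five claims by pushing the corresponding statements for $\hat\varphi$ and $\tphh$ from Proposition \ref{p_wild} through the deterministic change of variables $\varphi_{\ell,\lambda} = \hat\varphi_{\ell,\lambda}\lstar\varupsl{T}_\ell\varupsl{Q}_{\ell-1}$ and its analogue for $\tph$. Claims (i) and (iv) are immediate bookkeeping: for (i), $\varupsl{Q}_{-1}$ is the empty product, $\varupsl{T}_0$ is an affine map, and by \eqref{e_lift} every $\varupsl{A}(a,b)$ fixes $\pi\in\RR'$, so $\varphi_{0,\lambda} = \pi\lstar\varupsl{T}_0 = \pi$; claim (iv) is just the inversion of the defining relation.

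For (ii), strict monotonicity in $\lambda$ transfers from Proposition \ref{p_wild}(iii) since $x\mapsto x\lstar\varupsl{T}$ is strictly increasing and analytic for any $\varupsl{T}\in\UPSL$, so composition preserves these properties. Independence follows from the dependency structure: from \eqref{e_1stepfi}, $\hat\varphi_{\ell,\lambda}$ is a function of $\varupsl{W}_0,\dots,\varupsl{W}_{\ell-1}$, hence of $X_0,Y_0,\dots,X_{\ell-1},Y_{\ell-1}$, while $\tphh_{\ell,\lambda}$ depends on $\varupsl{W}_\ell,\dots,\varupsl{W}_{n-1}$. The deterministic factors $\varupsl{T}_\ell,\varupsl{Q}_{\ell-1},\varupsl{L}_{\ell,\lambda},\varupsl{Q}(\pi)$ do not introduce new randomness, so the two blocks of variables remain disjoint after the change of variables.

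The main work is (iii). Starting from $\hat\varphi_{\ell+1,\lambda} = \hat\varphi_{\ell,\lambda}\lstar\varupsl{R}_{\ell,\lambda}$, substitute $\hat\varphi_{\ell,\lambda}=\varphi_{\ell,\lambda}\lstar\varupsl{Q}_{\ell-1}^{-1}\varupsl{T}_\ell^{-1}$ and use the decomposition \eqref{e_evoevo} together with $\varupsl{J}_\ell = \varupsl{T}_\ell\varupsl{Q}(-2\Arg\rho_\ell)\varupsl{T}_\ell^{-1}$; the $\varupsl{T}_\ell^{\pm 1}$ factors cancel and $\varupsl{Q}_{\ell-1}^{-1}\varupsl{Q}(-2\Arg\rho_\ell) = \varupsl{Q}_\ell^{-1}$, leaving
\[
\varphi_{\ell+1,\lambda} = \varphi_{\ell,\lambda}\lstar\varupsl{Q}_\ell^{-1}\,\varupsl{S}_{\ell,\lambda}\,\varupsl{Q}_\ell.
\]
A direct check from \eqref{e_rhorho} gives $|\rho_j|=1$, so $\varupsl{Q}_\ell$ acts on $\RR'$ as the shift by $\theta_\ell := 2\Arg\rho_0+\cdots+2\Arg\rho_\ell$ with $e^{i\theta_\ell}=\eta_\ell$. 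Since $\varupsl{S}_{\ell,\lambda}$ is a product of affine maps, each fixing $\pi\in\RR'$, it fixes $\pi$ as well, and then from the definition of $\ash$ we get $\varphi\lstar\varupsl{S}_{\ell,\lambda}-\varphi = \ash(\varupsl{S}_{\ell,\lambda},-1,e^{i\varphi})$. Applying this with $\varphi = \varphi_{\ell,\lambda}-\theta_\ell$ yields the claimed formula $\Delta\varphi_{\ell,\lambda} = \ash(\varupsl{S}_{\ell,\lambda},-1,e^{i\varphi_{\ell,\lambda}}\bar\eta_\ell)$.

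For (v), use the quasi-periodicity property of $\UPSL$: any strictly increasing $\varupsl{T}$ with $(x+2\pi)\lstar\varupsl{T} = x\lstar\varupsl{T}+2\pi$ satisfies $a-b\in 2\pi\ZZ$ iff $a\lstar\varupsl{T}-b\lstar\varupsl{T}\in 2\pi\ZZ$. Applied to $\varupsl{T}=\varupsl{T}_\ell\varupsl{Q}_{\ell-1}$, this converts the eigenvalue criterion of Proposition \ref{p_wild}(iv) into $\varphi_{\ell,\Lambda}-\tph_{\ell,\Lambda}\in 2\pi\ZZ$. By (ii), the map $\lambda\mapsto \varphi_{\ell,\lambda}-\tph_{\ell,\lambda}$ is strictly increasing and analytic, so under the rescaling \eqref{e_scalingnu} the number of eigenvalues in the half-open window translates to the number of crossings of $2\pi\ZZ$, giving the formula. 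The main technical obstacle is the algebraic manipulation in $\UPSL$ in (iii); the remaining claims are routine translations through the change of variables.
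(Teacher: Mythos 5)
Your proof is correct and follows the same route the paper intends: the paper introduces Proposition~\ref{p_fi} with the remark that it ``translates the results of Proposition~\ref{p_wild} to this setting,'' and your argument carries out exactly that translation, including the key $\UPSL$-algebra in (iii) (the conjugation by $\varupsl{T}_\ell$, the telescoping of the $\varupsl{Q}$'s into $\varupsl{Q}_\ell^{-1}$, and the observation that $\varupsl{S}_{\ell,\lambda}$ fixes $\pi$ so the angular shift identity applies with $v=-1$).
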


The form
 \be\nonumber
 \varupsl{S}_{\ell,\lambda} =
 ( \varupsl{L}_{ \ell,\gl})^{\varupsl{T}_\ell} \varupsl{S}_{\ell,0}
 \ee
breaks $S$ into a deterministic $\lambda$-dependent part
and a random part that does not depend on $\lambda$. Let
$\varphi_{\ell,\lambda}^*=\varphi_{\ell,\lambda}\lstar (
\varupsl{L}_{ \ell,\gl})^{\varupsl{T}_\ell
\varupsl{Q}_\ell} $ be the intermediate phase between these
two steps. Note that $\varphi^*_{\ell,0}=\varphi_{\ell,0}$,
and
 \begin{equation}\label{e_Ldef}
( \varupsl{L}_{
\ell,\gl})^{\varupsl{T}_\ell}=\varupsl{A}\Big(1,\frac{\lambda
}{2\sqrt{n_0n}\sqrt{s(\idiota)^2-s(\ell/n)^2}}\Big)=
\varupsl{A}\Big(1,\frac{\lambda
}{2\sqrt{n_0(n_0-\ell)}}\Big)
 \end{equation} The
relative phase functions
 \be
 \alpha_{\ell,\gl}
 =
 \varphi_{\ell,\gl}-\varphi_{\ell,0},\qquad
 \alpha^*_{\ell,\gl}
 =
 \varphi^*_{\ell,\gl}-\varphi^*_{\ell,0}\nonumber
 \ee
are the main tools for counting eigenvalues in intervals.

\begin{proposition}[Relative phase function]\label{p_alpha}
The function $\alpha:\{0,1,\ldots,\lfloor n_0\rfloor \}\times
\RR\to\RR$ satisfies
\begin{enumerate}

\item $\alpha_{0,\gl}=0$, $\alpha_{\ell,0}=0$ and for each
$\ell>0$, $\alpha_{\ell,\gl}$ is analytic and strictly
increasing in $\gl$.

\item \label{alphaevo}
$\Delta
\alpha_{\ell,\gl}\;=\;\ash((\varupsl{L}_{\gl,n-\ell})^{\varupsl{T}_\ell},-1,e^{i \varphi_{\ell,\gl}}
\bar \eta_\ell)+\ash(\varupsl{S}_{\ell,0},e^{i \varphi_{\ell,\gl}^*} \bar
\eta_\ell,e^{i \varphi_{\ell,0}} \bar \eta_\ell)$
$$
\phantom{MMI}=\;\ash(( \varupsl{L}_{
\ell,\gl})^{\varupsl{T}_\ell},-1,e^{i \varphi_{\ell,\gl}}
\bar \eta_\ell)+\ash(\varupsl{S}_{\ell,0},e^{i
\varphi_{\ell,\gl}^*} \bar \eta_\ell,e^{i
\varphi_{\ell,\gl}} \bar
\eta_\ell)+\ash(\varupsl{S}_{\ell,0},e^{i
\varphi_{\ell,\gl}} \bar \eta_\ell,e^{i \varphi_{\ell,0}}
\bar \eta_\ell)
$$

\item \label{szelep1} For each
$\ell$ and $\lambda\ge 0$ we have
$\ftp{\alpha_{\ell,\lambda}}\le
\ftp{\alpha^*_{\ell+1,\lambda}}=
\ftp{\alpha_{\ell+1,\lambda}}$.
\end{enumerate}
\end{proposition}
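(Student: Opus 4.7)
My plan would be to prove the three parts in sequence, each relying on Proposition \ref{p_fi}.

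Part (i) is essentially definitional. Since $\varphi_{0,\lambda}=\pi$ is constant in $\lambda$ by Proposition \ref{p_fi}(i), we get $\alpha_{0,\lambda}=0$, and $\alpha_{\ell,0}=0$ is immediate from the definition $\alpha_{\ell,\lambda}=\varphi_{\ell,\lambda}-\varphi_{\ell,0}$. Analyticity and strict monotonicity in $\lambda$ transfer from $\varphi_{\ell,\lambda}$ to $\alpha_{\ell,\lambda}$ by Proposition \ref{p_fi}(ii), since the reference $\varphi_{\ell,0}$ is a constant in $\lambda$.

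For part (ii) I would combine the one-step evolution $\Delta\varphi_{\ell,\lambda}=\ash(\varupsl{S}_{\ell,\lambda},-1,e^{i\varphi_{\ell,\lambda}}\bar\eta_\ell)$ of Proposition \ref{p_fi}(iii) with the factorization $\varupsl{S}_{\ell,\lambda}=(\varupsl{L}_{\ell,\lambda})^{\varupsl{T}_\ell}\,\varupsl{S}_{\ell,0}$. Two basic additivity identities for the angular shift, both immediate from its definition, are central:
\begin{align*}
\ash(\varupsl{T}_1\varupsl{T}_2,v,w) &= \ash(\varupsl{T}_1,v,w) + \ash(\varupsl{T}_2,v\lcirc\varupsl{T}_1,w\lcirc\varupsl{T}_1), \\
\ash(\varupsl{T},v_1,v_3) &= \ash(\varupsl{T},v_1,v_2) + \ash(\varupsl{T},v_2,v_3).
\end{align*}
Applying the first to the factorization, noting that the affine transformation $(\varupsl{L}_{\ell,\lambda})^{\varupsl{T}_\ell}$ fixes $-1\in\partial\UU$ and sends $e^{i\varphi_{\ell,\lambda}}\bar\eta_\ell$ to $e^{i\varphi_{\ell,\lambda}^*}\bar\eta_\ell$ (the defining property of $\varphi_{\ell,\lambda}^*$), I obtain
\[
\Delta\varphi_{\ell,\lambda} = \ash\bigl((\varupsl{L}_{\ell,\lambda})^{\varupsl{T}_\ell},-1,e^{i\varphi_{\ell,\lambda}}\bar\eta_\ell\bigr) + \ash\bigl(\varupsl{S}_{\ell,0},-1,e^{i\varphi_{\ell,\lambda}^*}\bar\eta_\ell\bigr).
\]
Specializing to $\lambda=0$ (where $\varupsl{L}_{\ell,0}$ is the identity and $\varphi^*_{\ell,0}=\varphi_{\ell,0}$), subtracting and invoking the second additivity identity produces the first form for $\Delta\alpha_{\ell,\lambda}$. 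The three-term second form then follows by inserting $e^{i\varphi_{\ell,\lambda}}\bar\eta_\ell$ as an intermediate point and applying the second identity once more.

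Part (iii), the discrete analogue of Proposition \ref{l_sdeprop}(iii), is what I expect to be the main obstacle. The one-step transition factors as $\varphi_\ell\to\varphi^*_\ell\to\varphi_{\ell+1}$ via $(\varupsl{L}_{\ell,\lambda})^{\varupsl{T}_\ell}$ and $\varupsl{S}_{\ell,0}^{\varupsl{Q}_\ell}$, respectively. The key structural fact is that every $\varupsl{T}\in\UPSL$ acts on $\R'$ as an order-preserving homeomorphism satisfying the quasi-periodicity $(x+2\pi)\lstar\varupsl{T}=x\lstar\varupsl{T}+2\pi$. Because $\varupsl{S}_{\ell,0}^{\varupsl{Q}_\ell}$ is applied simultaneously to $\varphi^*_{\ell,\lambda}$ and $\varphi_{\ell,0}$, order-preservation combined with quasi-periodicity pins the image difference to the same $2\pi$-block as the pre-image difference, yielding equality of floors across this substep. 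The $\varupsl{L}$-substep contributes a positive shift on $\R'$ (for $\lambda\ge 0$, $(\varupsl{L}_{\ell,\lambda})^{\varupsl{T}_\ell}$ is a rightward translation in $\HH$), so it cannot decrease the floor, giving the remaining inequality. The delicate bookkeeping is in tracking how conjugation by $\varupsl{Q}_\ell$ moves the natural base point $-1\in\partial\UU$ (fixed by affine maps) to $-\bar\eta_\ell$; this requires using $|\eta_\ell|=1$ so that $\varupsl{Q}_\ell$ is a genuine rotation, together with Lemma \ref{l_ash}, to confirm that the order-preserving and quasi-periodic structure survives the conjugation.
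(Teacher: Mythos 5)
Your proof follows essentially the same route as the paper: parts (i)--(ii) are direct consequences of Proposition~\ref{p_fi} together with the composition and midpoint-insertion identities for $\ash$, and part (iii) uses the paper's exact substep decomposition (a nonnegative shift for the $\varupsl{L}$-substep, then monotonicity plus $2\pi$-quasiperiodicity for the $\varupsl{S}$-substep). The one thing you overcomplicate is the last paragraph: you do not need Lemma~\ref{l_ash} or any bookkeeping about where the base point $-1$ goes under conjugation --- Lemma~\ref{l_ash} is an approximation estimate, whereas what part (iii) needs is the structural fact, already recorded in Section~\ref{s_hyp}, that every $\varupsl{T}\in\UPSL$ acts on $\RR'$ as a strictly increasing $2\pi$-quasiperiodic map; since $\varupsl{S}_{\ell,0}^{\varupsl{Q}_\ell}$ and $(\varupsl{L}_{\ell,\lambda})^{\varupsl{T}_\ell\varupsl{Q}_\ell}$ are both elements of $\UPSL$, these properties hold for them automatically, and no further argument about the conjugation is required (also note the first substep operator acting on $\varphi$ is conjugated by $\varupsl{T}_\ell\varupsl{Q}_\ell$, not just $\varupsl{T}_\ell$, though this does not affect the inequality since conjugation within $\UPSL$ preserves nonnegativity of the shift).
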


\begin{proof}
(i)-(ii) are direct consequences of Proposition \ref{p_fi}. To
check (iii), we note
\begin{eqnarray*}
\alpha_{\ell,\lambda}&=&\varphi_{\ell,\gl}-\varphi_{\ell,0}\\
\alpha_{\ell,\lambda}^*&=&\varphi_{\ell,\gl}\lstar( \varupsl{L}_{ \ell,\gl})^{\varupsl{T}_\ell \varupsl{Q}_{\ell}}-\varphi_{\ell,0}\\
\alpha_{\ell+1,\lambda}&=&\varphi_{\ell,\gl}\lstar(
\varupsl{L}_{ \ell,\gl})^{\varupsl{T}_\ell
\varupsl{Q}_{\ell}}
(\varupsl{S}_{\ell,0})^{\varupsl{Q}_{\ell}}-\varphi_{\ell,0}\lstar
(\varupsl{S}_{\ell,0})^{\varupsl{Q}_{\ell}}.
\end{eqnarray*} Since
the map $ \varupsl{L}_{ \ell,\lambda}$ and its conjugates
are monotone in $\lambda$, we get
$\alpha_{\ell,\lambda}\le\alpha^*_{\ell,\lambda}$. Since
$(\varupsl{S}_{\ell,0})^{\varupsl{Q}_{\ell}}$ is the
lifting of a M\"obius transformation, it is monotone and
$2\pi$-quasiperiodic, whence
$\ftp{\alpha^*_{\ell,\lambda}}=
\ftp{\alpha_{\ell+1,\lambda}}$.
\end{proof}
\begin{remark}[Translation to the original matrix]
\label{r_oldmatrix}  Let $w_\ell$ denote the solution of the
discrete eigenvalue equation for the {\it original matrix}
(\ref{e_originalmatrix}). It is given in terms of the diagonal
matrix $D$ defined in the beginning of the section and the
solution $u$ of recursion (\ref{e_vrec}) as $w_\ell=(D u)_{\ell}$.
The ratios of the consecutive entries of this vector are
\[
p_\ell:=\frac{w_{\ell+1}}{w_\ell}=\frac{(Du)_{\ell+1}}{(Du)_\ell}=\frac{u_{\ell+1}}{u_\ell}
\, \frac{D_{\ell+1,\ell+1}}{D_{\ell,\ell}}=r_\ell\, \frac{
\chi_{(n-\ell-1)\beta}}{\sqrt{\beta}\,   s_{\ell-1}}.
\]
If $\ell\le n_0$ then we may further rewrite this using
$z_\ell$ as
\[
p_\ell=\frac{ \chi_{(n-\ell-1)\beta}}{\sqrt{\beta}\,
s_{\ell+1}}\, \left((z_\ell.\varupsl{U}^{-1}) \bar
\eta_{\ell-1} \Im(\rho_\ell)+\Re(\rho_\ell)\right).
\]
\end{remark}

\subsection{The discrete carousel}\label{ss_discrete}

Corollary \ref{cor_sdelim} in Section \ref{s_sdelim} shows that
the appropriate limit of the relative phase function
$\alpha_{\ell,\gl}$ is the stochastic sine equation.  In this
section we bring the discrete evolution equations in the form that
it becomes clear that their limit should be the Brownian carousel.

By \eqref{e 1step} the evolution of  $r_\ell$ is governed by a certain discrete process $\hat{
\varupsl{G}}_{\ell,\gl}$ in the hyperbolic automorphism group $\UPSL$:
$$r_\ell=r_0.\hat{\varupsl{G}}_{\ell,\lambda}=r_0.\varupsl{R}_{0,\lambda}\cdots \varupsl{R}_{\ell-1,\lambda}.$$
This process has rough jumps,
but it is a smooth function of the parameter $\lambda$. It is
therefore natural to expect that the evolution of the automorphism
$\hat{\varupsl{G}}_{\ell,\gl}^{\phantom{-1}} \hat{\varupsl{G}}_{\ell,0}^{-1}$ will have a
continuous scaling limit. In the following, we will rewrite this
expression in a form indicating the desired scaling limit.

By \eqref{d_tilder} the evolution operator  $ \varupsl{G}_{\ell,\lambda}$ of
$\varphi$ satisfies $\varupsl{G}_{\ell,\lambda}=\hat{
\varupsl{G}}_{\ell,\lambda}\varupsl{T}_{\ell}\varupsl{Q}_{\ell-1}$, and therefore
$ \varupsl{G}_{\ell,\gl}^{\phantom{-1}}  \varupsl{G}_{\ell,0}^{-1}=\hat{\varupsl{G}}_{\ell,\gl}^{\phantom{-1}} \hat{\varupsl{G}}_{\ell,0}^{-1}$. The evolution of $\varphi_{\ell,\gl}$ is given
by \be\label{e_fifi} \varphi_{\ell,\gl}=\varphi_{0,\gl}\lstar
\varupsl{G}_{\ell,\lambda}=\pi \lstar  \varupsl{G}_{\ell,\lambda}, \ee where with $A^B=B^{-1} A B$ we have
\begin{eqnarray}
\varupsl{G}_{\ell,\gl}&=&\varupsl{Y}_{0,\gl} \varupsl{X}_0\; \varupsl{Y}_{1,\gl} \varupsl{X}_1 \;\cdots
\;\varupsl{Y}_{\ell-1,\gl} \varupsl{X}_{\ell-1} \nonumber
 \\
 &=&
 \varupsl{Y}_{0,\gl} \;\varupsl{Y}_{1,\gl}^{\varupsl{G}_{1}^{-1}} \;\varupsl{Y}_{2,\gl}^{\varupsl{G}_{2}^{-1}
}\cdots \; \varupsl{Y}_{\ell-1,\gl}^{\varupsl{G}_{\ell-1}^{-1} }\; \varupsl{G}_{\ell}
 \nonumber
 \\
\varupsl{G}_\ell\; = \;\varupsl{G}_{\ell,0}&=&\varupsl{X}_0 \varupsl{X}_1 \cdots \varupsl{X}_{\ell-1},
\label{e_fievo_}
\end{eqnarray}
and we used the temporary notation
$\varupsl{Y}_{\ell,\gl}=(( \varupsl{L}_{
\ell,\gl})^{\varupsl{T}_{\ell}})^{\varupsl{Q}_\ell}$,
$\varupsl{X}_\ell=
(\varupsl{S}_{\ell,0})^{\varupsl{Q}_{\ell}}$. By
definition,
\begin{equation}\label{e_alphaevo_} \alpha_{\ell,\gl}=\varphi_{\ell,\gl}\lstar \varupsl{Q}(-\varphi_{\ell,0})
= \pi \lstar \varupsl{G}_{\ell,\lambda}\; \varupsl{Q}(-\varphi_{\ell,0}).
\end{equation}
We introduce the notation
\begin{eqnarray}\label{e_disc_BC}
\gamma_{\ell,\gl}&:=&\pi\lstar \varupsl{G}_{\ell,\lambda} \varupsl{G}_{\ell}^{-1} =
 \pi\lstar
 \varupsl{Y}_{0,\gl} \;\varupsl{Y}_{1,\gl}^{\varupsl{G}_{1}^{-1}} \;\varupsl{Y}_{2,\gl}^{\varupsl{G}_{2}^{-1}
 }\cdots \; \varupsl{Y}_{\ell-1,\gl}^{\varupsl{G}_{\ell-1}^{-1} }
\\
B_{\ell}&:=&0\lcirc \varupsl{G}_{\ell}^{-1} \in \UU.
\label{e_disc_BM}
\end{eqnarray}
With $\mathcal T$ denoting the M\"obius transformation
defined in (\ref{e_mobi}), we claim that
\[
\mathcal T(B_\ell,z)=z\lcirc   \varupsl{G}_{\ell} \, \varupsl{Q}(-\varphi_{\ell,0})
\]
with the choice of $z_0=-1$. This follows from the fact that
$\mathcal T(B_\ell,B_\ell)=0$ by definition and $\mathcal T(B_\ell,-1)=1$ by
(\ref{e_fifi}, \ref{e_fievo_}). Hence (\ref{e_alphaevo_}) becomes
\begin{equation*}
e^{i\alpha_{\ell,\gl}}=\mathcal T(B_\ell,e^{i \gamma_{\ell,\gl}}).
\end{equation*}
which is the same form as equation (\ref{e_car_sse})
relating the stochastic sine equation to the Brownian
carousel ODE.

\begin{remark}[Heuristics]
Note that $\varupsl{X}_\ell$ is approximately an infinitesimal noise element
in $\PSL$. $\varupsl{X}_\ell$ acting on $\UU$ moves $0$  infinitesimally in
a random direction.  This direction is not necessarily isotropic,
but the conjugation by the macroscopic rotation $\varupsl{Q}_\ell$ makes the
composition of consecutive $\varupsl{X}_\ell$'s move $0$ to an approximately
isotropic random direction. Thus $B_\ell$ in \eqref{e_disc_BM}
approximates hyperbolic Brownian motion started at $0$ run at a
time-dependent speed. Similarly, the $\varupsl{Y}_{\ell,\lambda}$ are
infinitesimal parallel translations, but because of the
conjugation by the macroscopic rotations $\varupsl{Q}_\ell$, their
composition approximates rotation about $0$. Thus
$\gamma_{\ell,\lambda}$ in \eqref{e_disc_BC} approximately evolves
by rotations about $B_\ell$. This is exactly how the Brownian
carousel evolves, giving a conceptual explanation of our results.
This suggests an alternative way to prove our results via the
Brownian carousel ODE \eqref{e bcode}.
\end{remark}

\section{The stochastic sine equation as a limit}\label{s_SSE}

This section describes the stochastic differential equation limit
of the phase function on the first stretch $[0,n_0(1-\eps)]$. In
the limit, this stretch completely determines the eigenvalue
behavior; this will be proved in Section \ref{s_uneventful}.

\subsection{Single-step asymptotics}\label{s_sstep}

Let $\mathcal F_\ell$ denote the $\sigma$-field generated
by the random variables $  X_0,$ $  X_{1},$ $\ldots,$ $
X_{\ell-1}$, and $  Y_{0},  Y_{1},\ldots, Y_{\ell-1}$. Let
$\ev_\ell[\,\cdot\,]$ denote conditional expectation with
respect to  $\FF_\ell$. By definition, the random variables
$\hat \varphi_{\ell,\gl}, \varphi_\ell, \alpha_\ell$ are
measurable with respect to $\FF_\ell$. Moreover, for fixed
$\gl$, both $\hat \varphi_{\ell,\gl}$ and
$\varphi_{\ell,\gl}$ are Markov chains adapted to $\mathcal
F_\ell$.

Throughout this and the subsequent sections we assume that $|\gl|$
is bounded by a constant $\bar \gl$. By default the notation  $\O(x)$ will
refer to a deterministic quantity whose absolute value is bounded
by $c |x|$, where  $c$ depends only on $\beta$ and $\bar \gl$. As $\ell$ varies $k$ will denote
$n_0-\ell$.

This section presents the asymptotics for the moments of step
$\Delta \varphi_{\ell,\gl}:=\varphi_{\ell+1,\gl}-\varphi_{\ell,\gl}$. 
Recall from Section \ref{s_svp} that $\ell$ moves on the interval
$[0,n_0]$. 
The continuum limit of $\varphi_{\ell,\gl}$ will live on the time interval $[0,1]$
so we introduce
$$ t=\frac \ell
{n_0 }\in[0,1].$$ We also introduce the rescaling  of $s(t)^2$ on
this stretch:
\begin{equation}\label{e_s_hat}
\hat s(t)^2 = \frac{s(t\,\idiota)^2-s(\idiota)^2}\idiota
\end{equation}
with $\hat s\ge 0$. This actually simplifies to
\begin{equation}
\hat s(t)=\sqrt{1-t}=\sqrt{{k}/{n_0}}
\end{equation}
in our case. As we will see later,
the scaling limit of the evolution of the relative phase
function will depend on $s$ and the scaling parameters
through $\hat s$. The fact that this function only depends
on $t$ explains why the point process
limits do not depend on the choice of the scaling
window in Theorem \ref{t_main}.
In  Section \ref{s_sch} we provide a more detailed
discussion and further implications. We will keep
the notation $\hat s$ (instead of writing $\sqrt{1-t}$) to
facilitate the treatment of a more general model discussed there.

%
%

Proposition \ref{p_fi} \ref{e_phievo} 
expresses the difference $\Delta
\varphi_{\ell,\gl}:=\varphi_{\ell+1,\gl}-\varphi_{\ell,\gl}$
via the angular shift of $\varupsl{S}_{\ell,\gl}$ and $(
\varupsl{L}_{ \ell,\gl})^{\varupsl{T}_{\ell}}$. Lemma
\ref{l_ash}, in turn, writes the angular shift in terms of
the pre-image of $i=\sqrt{-1}$. In the present case
\begin{eqnarray}
Z_{\ell,\gl}=i. \varupsl{S}_{\ell,\gl}^{-1}-i
 \;=\;
 i. \varupsl{T}_{\ell+1}^{-1}( \varupsl{L}_{ \ell,\gl}
 \varupsl{W}_{ \ell})^{-1}\varupsl{T}_\ell-i
 =
 \label{e_Zgl}
 v_{\ell,\lambda}+V_\ell,
\end{eqnarray}
where
\begin{equation}\label{e_vV}
 v_{\ell,\lambda}=
 -\frac{\lambda }{2n_0\hat s(t)}
 +
 \frac{\rho_{\ell+1}-\rho_\ell}{\Im \rho_\ell},\qquad V_\ell=
 \frac
 {  X_{\ell} +{\rho_{\ell+1}}  Y_{\ell}}
 {\sqrt{n_0}\,\hat s(t)}.
\end{equation}
The random variable $V_{\ell}$ is measurable with respect to
$\FF_{\ell+1}$,  but independent of $\FF_{\ell}$.


By Taylor expansion  we have the following estimates for the
deterministic part of $Z_{\ell,\gl}$:
\begin{equation} v_{\ell,\lambda}=
\frac{v_\lambda(t)}{n_0}+\O(k_{\phantom 0}^{-2}),\qquad
v_\lambda(t)= - \frac{\gl}{2\hat s(t)}+\frac{
 \frac{d}{dt}\rho(t)}{\Im \rho(t)},\qquad |v_{\lambda}(t)|\le c \frac{n_0}{k},\label{e_vbecs}
\end{equation}
where we abbreviate  $\rho(t)=\rho(n_0/n,tn_0/n)=\rho_\ell$, see
\eqref{e_crho2}. The behavior of the random term is governed by
\begin{equation}
\begin{array}{rclrcl}
 \ev V_{\ell} &=& \O(n-\ell)^{\nicefrac{-3}{2}} k_{\phantom 0}^{\nicefrac{-1}{2}} \qquad\qquad&
 \ev |V_\ell^2|
 &=&
\frac1{n_0}p(t)+\O(n-\ell)^{-1}k_{\phantom 0}^{-1} \\[.2em]
 \ev V_\ell^2
&=&\frac1{n_0}q(t)+\O(n-\ell)^{\nicefrac{-1}{2}}k_{\phantom
0}^{\nicefrac{-3}{2}}, &\ev |V_{\ell}|^d &=& \O(k_{\phantom
0}^{-d/2}),\quad d=3,4
\end{array}\label{e_Vmom}
\end{equation}
where
\begin{equation}
p(t)=\frac{4 }{\beta \hat s ^2}=\frac{4 n_0}{\beta k} ,\qquad q(t)=\frac{2 (1+\rho^2)}{
\beta \hat s ^2}.
\label{e_pqdef}
\end{equation}
Here the error terms come from the moment asymptotics
(\ref{e_mom}), the size of $\hat s$, and from the bounds
$$
\rho_{\ell+1}-\rho_{\ell}=\O(n-\ell)^{\nicefrac{-1}{2}}k_{\phantom
0}^{\nicefrac{-1}{2}},\qquad
\frac{d}{dt}\rho-\frac{\rho_{\ell+1}-\rho_\ell}{n_0}=\O(n-\ell)^{\nicefrac{-1}{2}}k_{\phantom
0}^{\nicefrac{-3}{2}}.
$$

\begin{proposition}[Single-step asymptotics for $\varphi_{\ell,\gl}$]
\label{p_sstepfi} For $\ell\le n_0$ with $t=\ell/n_0$ and $k=n_0-\ell$ we have
\begin{eqnarray}\label{e_onestepfi}
  \ev\left[\Dfi_{\ell,\gl} \big| \varphi_{\ell,\gl}=x \right]
  &=&\frac{1}{n_0} b_\gl(t)
+\frac1{n_0} \mbox{osc}_1+ \O(k_{\phantom
0}^{\nicefrac{-3}{2}}) = \O(k_{\phantom 0}^{-1}),
 \\[.4em]
 \label{e_onestepfi2}
  \ev\left[ \Dfi_{\ell,\gl} \Dfi_{\ell,\gl'}\big|\varphi_{\ell,\gl}=x, \varphi_{\ell,\gl'}=y \right]
  &=&
\frac1{n_0} a(t,x,y)
  +\frac1{n_0}\mbox{osc}_2+ \O(k_{\phantom 0}^{\nicefrac{-3}{2}}),
 \\[.4em]
\nonumber
 \ev_\ell\left|\Dfi_{\ell,\gl}\right|^d
 &=&\O(k_{\phantom 0}^{-d/2}), \qquad d=2,3,
\end{eqnarray}
where
\begin{eqnarray}
b_\gl= \frac{\lambda}{2\hat s}-\frac{\Re
\frac{d}{dt}\rho}{\Im \rho}+\frac{\Im (\rho^2)}{2\beta \hat s
^2},\qquad
a=\frac{2}{\beta \hat s ^2}  \Re\left[e^{i (y-x)}\right]+
\frac{3+\Re
\rho^2}{\beta \hat s ^2}.
\label{e_drift_zaj}
\end{eqnarray}
The oscillatory terms are
\begin{eqnarray}\label{e_oneerr1}
 \mbox{osc}_1&=&\Re \left((-v_{\lambda}-i q /2)
    e^{-i\,x} \eta_\ell\right)
+ \Re \left(i e^{-2 i\,x}
 \eta_\ell^2\,q\right)/4,
 \\[.4em]
 \mbox{osc}_2&=&
 p\Re \left(
  e^{-i\,x}\eta_\ell+
  e^{-i\,y}\eta_\ell   \right)/2
  +   \Re \left(q (e^{-i\,x} \eta_\ell+e^{-i\,y}\eta_\ell +
e^{-i\,(x+y)}\eta_\ell^2  ) \right)/2.
\nonumber
\end{eqnarray}

\end{proposition}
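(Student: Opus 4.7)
The approach is to combine Proposition \ref{p_fi}(\ref{e_phievo}) with the angular-shift expansion of Lemma \ref{l_ash}. Writing $w=e^{i\varphi_{\ell,\gl}}\bar\eta_\ell$ and $z=Z_{\ell,\gl}$ from \eqref{e_Zgl}, Lemma \ref{l_ash} applied with $v=-1$ (so that $\bar w-\bar v=1+\bar w$ and $2+\bar v+\bar w=1+\bar w$) yields
\begin{equation*}
\Dfi_{\ell,\gl}=-\Re\bigl[(1+\bar w)z\bigr]-\Re\bigl[\tfrac{i}{4}(1+\bar w)^2 z^2\bigr]+\O(|z|^3),
\end{equation*}
and similarly the uniform bound $|\Dfi_{\ell,\gl}|\le c|z|$. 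I would then decompose $z=v_{\ell,\gl}+V_\ell$ via \eqref{e_vV}; the key observation is that $V_\ell$ is independent of $\FF_\ell$ while $\varphi_{\ell,\gl}$, $v_{\ell,\gl}$ and $\eta_\ell$ are $\FF_\ell$-measurable, so conditioning on $\varphi_{\ell,\gl}=x$ simply fixes $w=e^{ix}\bar\eta_\ell$ and reduces everything to the $V_\ell$-expectation controlled by \eqref{e_Vmom}, while $v_{\ell,\gl}$ is controlled by \eqref{e_vbecs}.

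For \eqref{e_onestepfi} I would take $\ev_\ell$ term by term. The first-order expectation is $-\Re[(1+\bar w)(v_{\ell,\gl}+\ev V_\ell)]=n_0^{-1}(-\Re v_\gl(t)-\Re[\bar w\,v_\gl(t)])+\O(k^{-2})$ by \eqref{e_vbecs}; the constant-in-$x$ piece supplies the $\gl/(2\hat s)-\Re((d/dt)\rho)/\Im\rho$ summands of $b_\gl/n_0$, and the $\bar w$-piece contributes the $v_\gl$-part of $\mbox{osc}_1$. The second-order expectation reduces, using $|v_{\ell,\gl}|=\O(k^{-1})$, to $-\tfrac14\Re[i(1+\bar w)^2\ev V_\ell^2]$; substituting $\ev V_\ell^2=q(t)/n_0+\O(\cdot)$ and expanding $(1+\bar w)^2=1+2\bar w+\bar w^2$, the constant piece produces the It\^o-type correction $\Im(\rho^2)/(2\beta\hat s^2 n_0)$ that completes $b_\gl/n_0$, while the $\bar w$ and $\bar w^2$ pieces assemble the remaining oscillatory terms in $\mbox{osc}_1$. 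The third-order remainder is absorbed into $\O(k^{-3/2})$ since $\ev|z|^3\le c(|v|^3+\ev|V_\ell|^3)=\O(k^{-3/2})$.

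For the covariance \eqref{e_onestepfi2} I would apply the same first-order expansion to both $\Dfi_{\ell,\gl}$ and $\Dfi_{\ell,\gl'}$, noting that they share \emph{the same} noise $V_\ell$ (only $v_{\ell,\gl}$ depends on $\gl$). Multiplying the first-order parts and using the polarization identity $\Re(a)\Re(b)=\tfrac12\Re(ab)+\tfrac12\Re(a\bar b)$, one obtains a linear combination of $\ev V_\ell^2=q/n_0$ and $\ev|V_\ell|^2=p/n_0$ with coefficients built from $(1+\bar w)(1+\bar w')$ and $(1+\bar w)(1+w')$, where $w'=e^{iy}\bar\eta_\ell$. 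Since $|\rho_j|=1$ (as the equation \eqref{e_crho2} has complex conjugate roots) we have $|\eta_\ell|=1$ and hence $\bar w w'=e^{i(y-x)}$; the constant-in-$x,y$ pieces together with this cross term assemble $a(t,x,y)/n_0$, and the remaining trigonometric pieces form $\mbox{osc}_2/n_0$. Cross-terms involving the deterministic $v_{\ell,\gl},v_{\ell,\gl'}$ as well as the second-order Lemma \ref{l_ash} corrections are $\O(k^{-3/2})$. Finally, the higher-moment bounds $\ev_\ell|\Dfi|^d=\O(k^{-d/2})$ for $d=2,3$ follow directly from $|\Dfi|\le c|z|$ and $\ev|z|^d=\O(k^{-d/2})$, which are immediate from \eqref{e_vbecs} and the last entry of \eqref{e_Vmom}.

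The main obstacle is pure bookkeeping: many complex-valued cross terms must be tracked and cleanly separated into the non-oscillatory drift $b_\gl$, the oscillatory $\mbox{osc}_1,\mbox{osc}_2$, and the $\O(k^{-3/2})$ remainder. The only conceptually subtle point is that $b_\gl$ includes the It\^o-type correction $\Im(\rho^2)/(2\beta\hat s^2)$ which is visible only through the second-order term in $z$; omitting it would drop a crucial imaginary contribution. Once the expansion is organized correctly, the mixed error scales $k$ and $n-\ell$ absorb uniformly into $\O(k^{-3/2})$ on the range $\ell\le n_0$.
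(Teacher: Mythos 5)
Your proposal follows essentially the same route as the paper's proof: both start from Proposition \ref{p_fi}\ref{e_phievo}, apply the three-stage angular-shift expansion of Lemma \ref{l_ash} with $v=-1$ and $w=e^{i\varphi_{\ell,\gl}}\bar\eta_\ell$, decompose $Z_{\ell,\gl}=v_{\ell,\gl}+V_\ell$ using the independence of $V_\ell$ from $\FF_\ell$, substitute the moment asymptotics \eqref{e_vbecs}--\eqref{e_Vmom} for $\ev Z$, $\ev Z^2$, $\ev|Z|^2$ while absorbing the replacement error into $\O(k^{-3/2})$, and use the first-order (linear-in-$Z$) expansion together with the polarization identity for the covariance \eqref{e_onestepfi2}. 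Your bookkeeping of the non-oscillatory drift $b_\gl$ versus the $\eta_\ell,\eta_\ell^2$ oscillatory pieces, and the tail moments via $|\Dfi|\le c|Z|$, all match the paper's argument, so the proposal is correct and essentially identical in method.
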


\begin{proof}
By Proposition \ref{p_fi} \ref{e_phievo} the difference
 $\Delta \varphi_{\ell,\gl}$ can be written
\begin{eqnarray}
\Delta \varphi_{\ell,\lambda}&=&\ash(\varupsl{S}_{\ell,\gl},-1,z\bar
\eta)\nonumber\\
&=&\Re\left[-(1+\bar z\eta)Z-\frac{i(1 + \bar z\eta)^2}{4}\,Z^2
 \right]+\O(Z^3)\label{e_ashfifi1}\\
 &=& -\Re Z+\frac{\Im
 Z^2}4+\eta\mbox{ terms}+\O(Z^3).\nonumber
\end{eqnarray}
where we used
 $Z=Z_{\ell,\lambda}$, $\eta=\eta_\ell$ and $z=\exp(i \varphi_{\ell,\gl})$.
The estimate (\ref{e_ashfifi1}) is from the quadratic
expansion \eqref{e_ash} of the angular shift in Lemma
\ref{l_ash}. Note that since the second argument of $\ash$
is $-1$, we do not need an upper bound on $|Z|$. We take
expectations, the error term becomes
\[
\O(\ev|Z|^3)=\O(|v_{\ell,\gl}|^3+\ev
|V_\ell|^3)=\O(k_{\phantom 0}^{\nicefrac{-3}{2}}).
\]
By (\ref{e_Zgl}, \ref{e_Vmom}) we may replace the $\ev Z$ ,
$\ev |Z|^2$ and $\ev Z^2$ terms by $v_\lambda(t)$ , $p(t)$
and $q(t)$ while picking up an error term of
$\O(k_{\phantom 0}^{-2})$. Significant contributions come
only from the non-random terms $v_{\ell,\lambda}$ of $Z$
and the expectation of $V_\ell^2$. We are then left with
oscillatory terms with $\eta$, error terms, and the main
term
\begin{eqnarray*}
-\Re v_{\ell,\lambda} +\Im \ev V_\ell^2/4 &=&
\frac{1}{n_0}\left(-\Re v_\lambda+\Im q \right)
+\O(k_{\phantom 0}^{\nicefrac{-3}{2}})\\
&=&\frac{1}{n_0}\left[ \frac{\lambda}{2\hat s}-\frac{\Re
\rho'}{\Im \rho} +\frac{\Im (\rho^2)}{2\beta \hat s ^2}
\right]+\O(k_{\phantom 0}^{\nicefrac{-3}{2}}).
 \end{eqnarray*}
 The error terms
come from the moment bounds (\ref{e_mom}) of $X$, $Y$ and
from the discrete approximation of the derivative $\Re
\rho'$; their exact order is readily computed. This gives
(\ref{e_onestepfi}) with (\ref{e_oneerr1}).  The
$\O(k_{\phantom 0}^{-1})$ bound comes from evaluating
 the continuous functions in the main and oscillatory  terms at $t=\ell/n_0$.

For $\ev_\ell\left[ \Delta\varphi_{\lambda,\ell}
\Delta\varphi_{\lambda',\ell}\right]$ one uses the linear
approximation of the angular shift to get
$$
\Delta \varphi_{\ell,\lambda}=\Re[-(1+\bar z\eta)Z_{\ell, \lambda}]+\O(Z_{\ell, \lambda}^2)
$$
and similarly for $\lambda'$. After multiplying the two estimates and taking expectations,
only the noise terms in $Z_{\ell,\lambda}, Z_{\ell,\lambda'}$ contribute. Namely, with $V=V_\ell$, we have
\begin{eqnarray}
 \ev_\ell \,[\Delta \varphi_{\ell,\gl}\Delta
 \varphi_{\ell,\gl'}]&=&\nonumber
 \frac14 \;\ev  \left[ (1+\eta\bar z) V+ (1+\bar \eta z)
 \bar V \right] \nonumber \left[ (1+\eta\bar z' )V+ (1+\bar
 \eta  z') \bar V \right]+\O(k_{\phantom 0}^{\nicefrac{-3}{2}})
 \\
 &=&\frac12\; \Re(1+\bar zz')\ev |V|^2 +\frac12\;
 \Re \ev V^2 +\eta \textup{ terms }+\O(k_{\phantom 0}^{\nicefrac{-3}{2}})
 \nonumber
\end{eqnarray}
where we used $z=\exp(i \varphi_{\ell,\gl})$ and $z'=\exp(i \varphi_{\ell,\gl'})$.
Formula (\ref{e_onestepfi2}) now follows from the asymptotics of $\ev |V|^2, \ev V^2$. The last
claim follows from the third moment asymptotics of $X,Y$.
\end{proof}

\subsection{Continuum limit of the phase evolution}\label{s_sdelim}
\newcommand{\lip}{\stackrel{P}{\longrightarrow}}
\newcommand{\IO}{\mathcal I\,}

The goal of this section is to show that the first stretch
of the phase evolution converges in law to the solution of the
SDE (\ref{e_SDE_alpha}). Typically, the phase evolves in an oscillatory manner,
so we have to take advantage of  averaging. Our main
tool will be the following proposition, based on \cite{SV}
and \cite{EthierKurtz}, which allows for averaging of the
discrete evolutions.


\begin{proposition}\label{p_turboEK}
Fix $T>0$, and for each $n\ge 1$ consider a Markov chain
$$
(X^n_\ell\in \RR^d,\, \ell =1\ldots  \lfloor nT
\rfloor).
$$
Let $Y^n_\ell(x)$ be distributed as the increment
$X^n_{\ell+1}-x$ given $X^n_\ell=x$. We define
$$
b^n(t,x)= n E[ Y_{\lfloor nt\rfloor}^n(x)],\qquad
a^n(t,x)=nE[ Y_{\lfloor nt\rfloor}^n(x) Y_{\lfloor
nt\rfloor}^n(x)^\textup{T}].
$$
Suppose that as $n\to \infty $ we have
\begin{eqnarray}
|a^n(t,x)-a^n(t,y)|+|b^n(t,x)-b^n(t,y)|&\le&
c|x-y|+o(1)\label{e lip}\\
 \sup_{x,\ell}  E[|Y^n_\ell(x)|^3] &\le& cn^{\nicefrac{-3}{2}} \label{e 3m},
\end{eqnarray}
and that there are functions $a,b$ from $\R\times[0,T]$ to
$\R^{d^2}, \R^d$ respectively with bounded first and second
derivatives so that
\begin{eqnarray}
\sup_{x,t} \Big|\int_0^t a^n(s,x)\,ds-\int_0^t a(s,x)\,ds
\Big|+\sup_{x,t} \Big|\int_0^t b^n(s,x)\,ds-\int_0^t b(s,x)\,ds
\Big| &\to& 0. \label{e
fia}
\end{eqnarray}
Assume also that the initial conditions converge weakly:
$$
X_0^n\cd X_0.
$$
Then $(X^n_{\lfloor n t\rfloor}, 0 \le t \le T)$ converges in law
to the unique solution of the SDE
$$
dX = b \,dt + a\, dB, \qquad X(0)=X_0.$$
\end{proposition}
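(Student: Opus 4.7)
The plan is to follow the Stroock--Varadhan martingale problem route, modified by a short-scale averaging argument that exploits the integrated convergence hypothesis (\ref{e fia}) in lieu of pointwise convergence of the coefficients. Interpolate the chain to a c\`adl\`ag process $\tilde X^n(t)=X^n_{\lfloor nt\rfloor}$. Tightness in Skorokhod space follows from the uniform third-moment bound (\ref{e 3m}) on the jumps of $\tilde X^n$, combined with the local boundedness of $a^n,b^n$ implicit in (\ref{e lip}) and (\ref{e fia}); a standard Aldous/Kolmogorov criterion applies.

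For each $f\in C_c^3(\R^d)$ the natural martingale is
\[
M^{f,n}_\ell = f(X^n_\ell)-f(X^n_0)-\sum_{k=0}^{\ell-1}\ev\!\left[f(X^n_{k+1})-f(X^n_k)\,\big|\,X^n_k\right].
\]
A second-order Taylor expansion with remainder controlled by (\ref{e 3m}) yields
\[
\ev\!\left[f(X^n_{k+1})-f(X^n_k)\,\big|\,X^n_k=x\right] = \tfrac{1}{n}\!\left[b^n(k/n,x)\cdot\nabla f(x)+\tfrac{1}{2}\operatorname{tr}\!\left(a^n(k/n,x)D^2 f(x)\right)\right]+\O(n^{-3/2}),
\]
so up to an $\O(n^{-1/2})$ error the compensator is $C^n_f(t)=\int_0^t\!\left[b^n\cdot\nabla f+\tfrac{1}{2}\operatorname{tr}(a^n D^2 f)\right]\!(s,\tilde X^n(s))\,ds$.

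The hard part is passing to the limit inside $C^n_f(t)$, since $a^n,b^n$ need not converge pointwise. I would partition $[0,t]$ into blocks $[t_j,t_{j+1}]$ of width $\delta$; by tightness the oscillation of $\tilde X^n$ on a block is small uniformly in $n$ once $\delta$ is small. The Lipschitz hypothesis (\ref{e lip}) then lets me freeze $\tilde X^n(s)$ to $\tilde X^n(t_j)$ inside each block with an error that is controlled by this modulus; the uniform-in-$x$ integrated convergence (\ref{e fia}) replaces the frozen $a^n,b^n$ by $a,b$ on each block with total error $\O((T/\delta)\sup_{x,t}|\int_0^t(a^n-a)(s,x)\,ds|)$, which vanishes for fixed $\delta$ as $n\to\infty$; and smoothness of $a,b$ lets me unfreeze back to $\tilde X^n(s)$. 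Picking $\delta$ small first and then sending $n\to\infty$, along any subsequential limit $\tilde X^n\Rightarrow X$ the compensator converges to $\int_0^t L_s f(X(s))\,ds$ with $L_s f = b\cdot\nabla f+\tfrac{1}{2}\operatorname{tr}(aD^2 f)$, identifying $X$ as a solution of the martingale problem for $(L_s)$ with initial law $X_0$.

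Since $a,b$ have bounded first and second derivatives, the martingale problem is well-posed (equivalently, the SDE $dX=b\,dt+\sigma\,dB$ with $\sigma\sigma^T=a$ admits a pathwise unique strong solution). Uniqueness in law forces every subsequential limit to coincide with $X$, hence $\tilde X^n\Rightarrow X$.
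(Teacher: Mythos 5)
Your plan is, at the level of strategy, the same as the paper's: characterize the limit via the time--inhomogeneous martingale problem for $L_s f = b\cdot\nabla f + \tfrac12\operatorname{tr}(aD^2f)$, and turn the integrated-convergence hypothesis \eqref{e fia} into convergence of compensators by freezing the state on short time blocks, using \eqref{e lip} to control the freezing error and then applying \eqref{e fia} at fixed (frozen) $x$. The paper performs the identical freezing step with a piecewise-constant process $X^{n,L}$ and delegates the remaining martingale-problem bookkeeping to Ethier--Kurtz, Theorem 7.4.1, checking its hypotheses; you re-derive that part by hand. So far so good.

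The genuine gap is in your tightness step. You write that tightness follows from \eqref{e 3m} together with ``local boundedness of $a^n,b^n$ implicit in \eqref{e lip} and \eqref{e fia},'' but neither hypothesis gives a pointwise bound on $b^n$: \eqref{e lip} only bounds the $x$-increments of $b^n$ at \emph{fixed} $t$, and \eqref{e fia} only bounds $\int_0^t b^n(s,x)\,ds$. From \eqref{e 3m} the most you can extract is $|b^n| = n|\ev Y| \le n(\ev|Y|^3)^{1/3} = \O(n^{1/2})$, so a priori the conditional drift of $\tilde X^n$ over a window of length $\delta$ is only $\O(\delta n^{1/2})$ --- useless for Aldous/Kolmogorov. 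The drift part therefore needs \emph{the same} freeze-and-integrate device you later apply to the compensator, and the apparent circularity (freeze to prove tightness, need tightness to bound the freezing error) has to be broken by a moment estimate that does not presuppose tightness: the paper gets this from a Burkholder--Davis--Gundy bound on the martingale part, yielding $\ev\|X^n - X^{n,L}\|_\infty \le f(L)$ with $f(L)\to 0$ uniformly in $n$ (the bound \eqref{e_L1}). Concretely: either promote the blocking lemma to a self-contained moment estimate and derive tightness from it as the first step, or invoke a packaged statement such as Ethier--Kurtz 7.4.1 that delivers tightness as part of its conclusion once the integrated convergences of drift and diffusion are verified.
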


We will prove this in Appendix \ref{a_sdetools}. The next
lemma provides the averaging conditions for the above
proposition. Recall that   $\Dfi_{\ell,\gl}=\varphi_{\gl,
\ell+1}-\varphi_{\ell,\gl}$.
%

\begin{lemma}\label{l_porgosdrift}
Fix $\gl, \gl'$ and $\eps>0$. Then for any $\ell_1\le n_0
(1-\eps)$
\begin{eqnarray}\label{bsum}
\frac1{n_0}\sum_{\ell=0}^{\ell_1-1}
\ev\left[\Dfi_{\ell,\gl}\,|\,\varphi_{\ell,\gl}=x
\right]&=&\frac1{n_0}\sum_{\ell=0}^{\ell_1-1} b_\gl(t) +\O( \nn
n_0^{\nicefrac{-3}{2}}+n_0^{\nicefrac{-1}{2}})\\
\frac1{n_0}\sum_{\ell=0}^{\ell_1-1}
\ev\left[\Dfi_{\ell,\gl}
\Dfi_{\ell,\gl'}\,|\,\varphi_{\ell,\gl}=x ,\,
\varphi_{\ell,\gl'}=y \right]
&=&\frac1{n_0}\sum_{\ell=0}^{\ell_1-1}
a(t,x,y)+
\O( \nn
n_0^{\nicefrac{-3}{2}}+n_0^{\nicefrac{-1}{2}}) \nonumber
\end{eqnarray}
where $t=\ell/n_0$, the functions $b_\gl, a$ are defined in
(\ref{e_drift_zaj}), and the implicit constants in $\O$ depend
only on  $\eps,\beta, \bar \lambda$.
\end{lemma}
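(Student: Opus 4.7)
The plan is to plug the single-step estimates of Proposition \ref{p_sstepfi} directly into both sums. Each summand on the left-hand side decomposes as the desired main term $b_\gl(t)/n_0$ (resp.\ $a(t,x,y)/n_0$), an oscillatory correction $\mbox{osc}_j/n_0$, and a pointwise remainder $\O(k^{-3/2})$ with $k=n_0-\ell$. The task reduces to bounding the last two contributions, summed over $\ell\in[0,\ell_1-1]$, uniformly in $x,y$ by $\O(\nn n_0^{\nicefrac{-3}{2}}+n_0^{\nicefrac{-1}{2}})$.

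Since $\ell_1\le n_0(1-\eps)$, one has $k\ge \eps n_0$ throughout the summation range, so the pointwise remainder aggregates as
\begin{equation*}
\sum_{\ell=0}^{\ell_1-1}\O(k^{-3/2})\;\le\;C\int_{\eps n_0-1}^{n_0}u^{-3/2}\,du\;=\;\O(n_0^{\nicefrac{-1}{2}}),
\end{equation*}
which produces the second term in the claimed error.

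The main obstacle is to bound the oscillatory sums $\sum_\ell\mbox{osc}_j$. Inspecting \eqref{e_oneerr1}, each $\mbox{osc}_j$ is a linear combination of terms of the form $c_\ell \eta_\ell^k$ with $k\in\{1,2\}$, where $c_\ell$ is a smooth function of $t=\ell/n_0$ (and the fixed parameters $x,y$) built from $v_\gl(t)$, $p(t)$, $q(t)$. Two geometric facts drive the estimate: first, $|\rho_\ell|=1$ by the fixed-point equation \eqref{e_crho2}, so $|\eta_\ell|=1$; second, combining \eqref{e_rhorho} with $\ell\le n_0(1-\eps)$ gives
\begin{equation*}
\Im\rho_\ell=\sqrt{\frac{n_0-\ell}{n_0-\ell+\nnn}}\;\ge\;c_\eps\,\sqrt{\frac{n_0}{n_0+\nn^2}},
\end{equation*}
hence $|\rho_{\ell+1}^{2k}-1|$ is bounded below by the same quantity (up to a constant) for $k\in\{1,2\}$. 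Telescoping $\eta_\ell^k=(\eta_{\ell+1}^k-\eta_\ell^k)/(\rho_{\ell+1}^{2k}-1)$ and carrying out Abel summation with this lower bound yields the uniform partial-sum estimate
\begin{equation*}
\max_{L\le\ell_1}\Bigl|\sum_{\ell=0}^{L}\eta_\ell^k\Bigr|\;\le\;C_\eps\sqrt{1+\nn^2/n_0}.
\end{equation*}
On the restricted range $k\ge\eps n_0$, the coefficients $c_\ell$ are uniformly bounded by $C_{\eps,\beta,\bar\gl}$ and their discrete total variation is $\O_{\eps,\beta,\bar\gl}(1)$ (since $v_\gl,p,q$ are smooth on $[0,1-\eps]$ with bounded derivatives there). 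A second Abel summation therefore gives
\begin{equation*}
\Bigl|\sum_{\ell=0}^{\ell_1-1}\mbox{osc}_j\Bigr|\;=\;\O\bigl(\sqrt{1+\nn^2/n_0}\bigr),
\end{equation*}
and dividing by $n_0$ contributes $\O(n_0^{-1}+\nn\,n_0^{\nicefrac{-3}{2}})$, which is dominated by the claimed bound. The estimate for the cross-moment sum is identical, applied termwise to the three $\eta_\ell,\eta_\ell^2$ summands of $\mbox{osc}_2$. The hard part is the bookkeeping in the Abel summation — verifying that the partial-sum bound truly scales like $\sqrt{1+\nn^2/n_0}$ and that the slowly-varying coefficients $c_\ell$ have finite total variation on the restricted range without picking up an extra factor of $n_0$.
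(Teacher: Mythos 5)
Your overall structure matches the paper's proof: plug in the single-step asymptotics of Proposition \ref{p_sstepfi}, split into main term, oscillatory term, and pointwise remainder, and control the last two by an Abel-summation / van-der-Corput argument against the unit complex numbers $\eta_\ell$. The treatment of the pointwise $\O(k^{-3/2})$ remainder and the $\eta_\ell$-oscillatory term are both sound and essentially what the paper does (its Lemma \ref{l_blackboxnew}).

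However, there is a genuine gap in your treatment of the $\eta_\ell^2$ sum. You assert that $|\rho_{\ell+1}^{2k}-1|$ is bounded below by (a constant times) $\Im\rho_{\ell+1}$ for $k\in\{1,2\}$, but this is false for $k=2$: writing \(|\rho^4-1| = |\rho^2-1|\,|\rho^2+1|\), the extra factor $|\rho^2+1|$ degenerates. Indeed, from \eqref{e_rhorho}, as $\mu_n\to 0$ one has $\rho_\ell\to i$, hence $\rho_\ell^2\to -1$ and $|\rho_\ell^2+1|\to 0$, so $\eta_\ell^2=\rho_0^4\cdots\rho_\ell^4$ stops oscillating and its partial sums grow like $\ell_1\sim n_0$, not like $\sqrt{1+\mu_n^2/n_0}$. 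Your Abel-summation argument therefore breaks in the regime of small $\mu_n$. The paper's Lemma \ref{l_blackboxnew} gives a different partial-sum bound for $\eta^2$, namely $\O(\mu_n k^{-1/2}+\mu_n^{-1}k_0^{1/2})$, and then the proof of the present lemma supplements this with a second, non-oscillatory estimate: using that $|q(t)|\le C\mu_n k^{-3/2} n_0^{-1}$ (so the $\eta^2$-coefficient itself vanishes as $\mu_n\to 0$), one bounds $\zeta_2$ by absolute values and gets $\O(\mu_n n_0^{-1/2})$, and finally chooses between the two bounds according to whether $\mu_n\le 1$ or $\mu_n>1$. Your proposal omits this case split and would not close in the $\mu_n\to 0$ regime.
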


\begin{proof}
Summing (\ref{e_onestepfi}) we get \eqref{bsum} with a
preliminary error term
\begin{eqnarray*}
\frac1{n_0}\sum_{\ell=0}^{\ell_1-1}  \Re (e_{1,\ell}\,
\eta_{\ell})+\frac1{n_0}\sum_{\ell=0}^{\ell_1-1}  \Re
(e_{2,\ell} \,\eta_{\ell}^2)+\O(k_1^{\nicefrac{-1}{2}}),
\end{eqnarray*}
where the first two terms will be denoted
$\zeta_1,\zeta_2$. Here
$$e_{1,\ell}=(-v_\lambda(t)-i q(t)/2) e^{-i  x},\qquad e_{2,\ell}=i q(t) e^{- 2i
x}/4$$ and $k_1=n_0-\ell_1>cn_0$, where for this proof $c$
denotes varying constants depending on $\eps$. Using the
fact that $v_\gl, q$ and their first derivatives are
continuous on $[0,1-\eps]$
 we get
$$|e_{i,\ell}|<c,\qquad |e_{i,\ell}-e_{i,\ell+1}|<c
n_0^{-1}.$$ Thus by the oscillatory sum Lemma
\ref{l_blackboxnew},
\begin{eqnarray*}
|\zeta_1|\le  c\sum_{\ell=0}^{\ell_1-1} (\nn/\sqrt{k}+1)
n_0^{-2} +c(\nn/\sqrt{k_1}+1) n_0^{-1}\le
 c ( \nn n_0^{\nicefrac{-3}{2}}+n_0^{-1}).
\end{eqnarray*}
Similarly, if we apply the same estimate for the second sum, we get
\begin{equation*}
|\zeta_2|\le c \sum_{\ell=0}^{\ell_1-1}
(\nn/\sqrt{k}+\sqrt{n_0}/\nn) n_0^{-2} +c
(\nn/\sqrt{k_1}+\sqrt{n_0}/\nn) n_0^{-1}\le c
( \nn
n_0^{\nicefrac{-3}{2}}+ \nn^{-1}
n_0^{\nicefrac{-1}{2}}).
\end{equation*}
We could also estimate $\zeta_2$ by taking absolute value
in each term. Using (\ref{e_Vmom}) together with (\ref{e_crho}) we get
\[
|q(t)|=\frac{\mu_n}{\beta \hat s(t)^2 \sqrt{\mu_n^2/4+k-1/2}}\le C \mu_n k^{\nicefrac{-3}{2}}n_0^{-1}
\]
which leads to
\begin{eqnarray*}
|\zeta_2|\le c \sum_{\ell=0}^{\ell_1-1}
 \nn k^{\nicefrac{-3}{2}} \le c  \nn
n_0^{\nicefrac{-1}{2}}.
\end{eqnarray*}
Using this bound for $ \nn\le 1$ and the previous one for
$ \nn>1$ we get the desired estimate \eqref{bsum}. The
asymptotics of the second sum follow similarly.
\end{proof}

We are now ready to state and prove the continuum limit theorem.

\newcommand{\bfi}{\underline{\varphi}}
\newcommand{\bal}{\bm{\alpha}}
\newcommand{\bvf}{\bm{\vf}}

\begin{theorem}[Continuum limit of the phase function]\label{thm_sdelim}
Suppose that $n_0/n\to 1/(1+\nu)$ with $\nu \in [0,\infty]$. Then the continuous function $\rho(t)=\rho(n_0/n,tn_0/n)$ (see (\ref{e_crho})) converges to a limit for which we use the same
notation.
%
Let $B$ and $Z$ be a real and a complex Brownian motion,
and for each $\lambda\in \R$ consider the strong solution of
\begin{eqnarray}\label{e_sdelimfi}
  d \varphi_{\lambda}
  &=&
\left[ \frac{\lambda}{2\hat s}-\frac{\Re \rho'}{\Im \rho}
+\frac{\Im (\rho^2)}{2\beta \hat s ^2} \right]dt +
\frac{\sqrt2\Re(e^{-i\varphi_\lambda}dZ)}{\sqrt{\beta}\,\hat s}
  +\frac{\sqrt{3+\Re \rho^2}}{\sqrt{\beta}\,\hat s}dB,
  \\[.4em]
  \varphi_{\lambda}(0)&=&\pi. \nonumber
\end{eqnarray}
Then we have
$$
 \varphi_{\gl, \lfloor n_0 t\rfloor }\cd \varphi_\lambda(t),\qquad \textup{as  $n\to \infty$},
$$
where the convergence is in the sense of finite dimensional
distributions for $\lambda$ and in path-space $D[0,1)$ for
$t$.
\end{theorem}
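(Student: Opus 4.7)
The plan is to apply Proposition \ref{p_turboEK} to the $\R^d$-valued Markov chain
\[
\Phi^n_\ell = (\varphi_{\ell,\gl_1},\dots,\varphi_{\ell,\gl_d}),
\]
with the time rescaling $t=\ell/n_0$ (so $n$ in the proposition plays the role of $n_0$), restricted to each interval $[0,1-\eps]$ with $\eps>0$. Since convergence in $D[0,1)$ is equivalent to convergence in $D[0,1-\eps]$ for every $\eps>0$, and since Proposition \ref{p_turboEK} handles the joint convergence for an arbitrary finite $d$, both statements of the theorem will follow by letting $\eps\to 0$. The asserted convergence of the deterministic path $\rho(t)$ is an elementary continuity property of \eqref{e_crho2} in the parameter $n_0/n$.

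The four hypotheses of Proposition \ref{p_turboEK} are verified in turn. The initial condition $\Phi_0^n=(\pi,\dots,\pi)$ is deterministic, so its weak convergence is automatic. The third-moment bound (\ref{e 3m}) follows from Proposition \ref{p_sstepfi}: on $\ell\le(1-\eps)n_0$ one has $k=n_0-\ell\ge \eps n_0$, so $\ev_\ell|\Dfi_{\ell,\gl}|^3=\O(k^{-3/2})=\O(n_0^{-3/2})$ uniformly, and the $\R^d$-valued bound follows coordinate-wise. The averaging condition (\ref{e fia}) is precisely the content of Lemma \ref{l_porgosdrift}: applied to the diagonal and off-diagonal entries of $a^n$ and to $b^n$ it gives the integrated error $\O(\mu_n n_0^{-\nicefrac{3}{2}}+n_0^{-\nicefrac{1}{2}})$, which vanishes because the standing assumption $n_0^{-1}\mu_n^{\nicefrac{2}{3}}\to 0$ forces $\mu_n=o(n_0^{\nicefrac{3}{2}})$. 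For the Lipschitz condition (\ref{e lip}) observe that the one-step drift and covariance in Proposition \ref{p_sstepfi} depend on the phase coordinates only through the bounded trigonometric factors $e^{\pm ix_i}$, $e^{\pm i(x_i+x_j)}$ and $\Re[e^{i(x_j-x_i)}]$; their prefactors $v_\gl, p, q, b_\gl, a$ are uniformly bounded on $t\in[0,1-\eps]$ by constants depending only on $\beta$, $\bar\gl$ and $\eps$, so the map $x\mapsto (a^n,b^n)(t,x)$ is globally Lipschitz with a uniform constant.

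To identify the limiting SDE, the drift in \eqref{e_drift_zaj} matches the $dt$-term of \eqref{e_sdelimfi} verbatim. For the covariance,
\[
a(t,x,y)=\frac{2}{\beta\hat s^2}\Re[e^{i(y-x)}]+\frac{3+\Re\rho^2}{\beta\hat s^2}
\]
decomposes into two pieces: the first is the quadratic covariation of $\sqrt{2}\Re(e^{-ix}dZ)/(\sqrt\beta\hat s)$ with $\sqrt{2}\Re(e^{-iy}dZ)/(\sqrt\beta\hat s)$ for a standard complex Brownian motion $Z$, using $\cos(y-x)=2\Re(e^{-ix})\Re(e^{-iy})/2+2\Im(e^{-ix})\Im(e^{-iy})/2$, while the second is the covariation of $\sqrt{3+\Re\rho^2}/(\sqrt\beta\hat s)\,dB$ with itself for an independent real Brownian motion $B$. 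Proposition \ref{p_turboEK} therefore delivers weak convergence of $\Phi^n_{\lfloor n_0 t\rfloor}$ to the unique strong solution of \eqref{e_sdelimfi} on $[0,1-\eps]$.

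The main obstacle is the averaging step. The one-step drift and covariance in Proposition \ref{p_sstepfi} carry oscillatory contributions $\Re(e_{j,\ell}\eta_\ell^m)$ whose dependence on $\ell$ through $\eta_\ell=\rho_0^2\cdots\rho_\ell^2$ does not vanish pointwise; these terms are responsible for the $\O(k^{-1})$ bound in \eqref{e_onestepfi} but become negligible only after summation, since the argument of $\eta_\ell^m$ advances by a uniformly positive amount per step while the non-oscillatory prefactors vary slowly. The cancellations that produce this are captured by the oscillatory-sum estimate used inside Lemma \ref{l_porgosdrift}, and it is precisely the hypothesis $\mu_n=o(n_0^{\nicefrac{3}{2}})$ that keeps the quantitative error in that averaging negligible in the limit.
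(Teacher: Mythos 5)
Your proposal follows the paper's own argument: it invokes Proposition \ref{p_turboEK} for the $\R^d$-valued chain $(\varphi_{\ell,\lambda_1},\dots,\varphi_{\ell,\lambda_d})$ on $[0,1-\eps]$, verifies the third-moment bound from Proposition \ref{p_sstepfi}, gets the averaging hypothesis from Lemma \ref{l_porgosdrift} together with the convergence of the $n$-dependent coefficients forced by $\rho_n\to\rho$, checks the Lipschitz hypothesis via the bounded trigonometric dependence of the one-step coefficients, and identifies the limiting drift and diffusion exactly as in the paper. This is essentially the same proof, with perhaps a bit more explicit attention to the initial condition, the Lipschitz check, and the $\eps\to 0$ step for $D[0,1)$.
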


\begin{remark} From (\ref{e_crho}) we get that the limit of $\rho(n_0/n, tn_0/n)$ as $n_0/n\to 1/(1+\nu)$ is
$$\rho(t)=\frac{\nu }{\nu +1-t}+i {\frac{\sqrt{(1-t) (2 \nu +1-t)}}{\nu +1-t}}$$
with $\rho(t)=1$ if $\nu=\infty$.
Thus equation (\ref{e_sdelimfi}) can be written as
 \begin{equation}\label{sdelim2}
 \sqrt{1-t}\;
 d\varphi_\lambda
 =
 \frac{\lambda }{2}\;
 dt+\sqrt{\frac2 \beta}\,\Re( e^{-i\varphi_\lambda}dZ_t)+
 \left(\frac{1}\beta-\frac12\right)
 \frac{\sqrt{\nu}}{\nu+1-t}
 dt+
 \sqrt{\frac{2(2\nu+1-t)}{{\beta(\nu+1-t)}}}
dB,
 \end{equation}
where the last two terms are 0 and $2
\beta^{\nicefrac{-1}{2}} dB$, respectively when
$\nu=\infty$.
\end{remark}

\begin{proof}[Proof of Theorem \ref{thm_sdelim}]
It suffices to show that for any finite sequence
$(\lambda_1,\ldots, \lambda_d)$ and for any $T<1$ the
following holds on the time interval $[0,T]$,
\begin{equation}\nonumber
(\varphi_{ \lfloor n_0 t\rfloor,\gl_1},\ldots, \varphi_{ \lfloor n_0
t\rfloor,\gl_d}) \cd (\varphi_{\lambda_1}(t),\ldots,
\varphi_{\lambda_d}(t)).
\end{equation}
We will use Proposition \ref{p_turboEK}. For $x\in \R^d$ let
\begin{eqnarray*}
\begin{array}{rclrcl}
\bfi_{\ell}&=&(\varphi_{\ell,\gl_1},\ldots, \varphi_{
\ell,\gl_d}),&\qquad \Delta
\bfi_{\ell}&=&\bfi_{\ell+1}-\bfi_{\ell},
\\[6pt]
b_{\ell}(x)&=&n_0 \ev\left[ \Delta \bfi_\ell \big| \bfi_\ell=x\right],&\qquad
a_\ell (x)&=&n_0 \ev\left[ (\Delta \bfi_\ell)
(\Delta \bfi_\ell )^T \big| \bfi_\ell=x \right].
\end{array}
\end{eqnarray*}
Recall the estimates (\ref{e_onestepfi}) and
(\ref{e_onestepfi2}). Since
$ \nn^2/(4n_0)\to \nu$, the functions $b_\gl, a$
defined in \eqref{e_drift_zaj} converge uniformly on
$[0,T]$ to $\hat b_\gl,\hat a$ which are also
defined by \eqref{e_drift_zaj} but in terms of the limit
of $\rho$ (recall that $\hat s$ is just $\sqrt{1-t}$).

Using this with Lemma \ref{l_porgosdrift} we get that
\begin{eqnarray}
\sup_{x\in R^d,t\le T} |\int_0^t n_0 b_{\lfloor n_0
s\rfloor }(x)ds-\int_0^t \tilde b(x,s) ds| &\to& 0,
\nonumber
 \\
\sup_{x\in R^d,t\le T} |\int_0^t n_0 a_{\lfloor n_0
s\rfloor }(x) ds-\int_0^t \tilde a(x,s) ds| &\to& 0,
 \label{e_fitty2}
\end{eqnarray}
where
\begin{eqnarray}\nonumber
\tilde b(x,t)&=& \left(\hat b_{\gl_1}(t),\ldots,\hat
b_{\gl_d}(t) \right) , \qquad \Big(\tilde a(x,t) \Big)_{j,k} =\hat
a(t,x_j,x_k)
.\nonumber
\end{eqnarray}
This means that condition (\ref{e fia})
in Proposition \ref{p_turboEK}  is satisfied. Because of
(\ref{e_drift_zaj}) and the moment
bounds we can see that (\ref{e lip}) and (\ref{e 3m}) are
also satisfied, thus $(\varphi_{\gl, \lfloor n_0
t\rfloor},\ldots, \varphi_{\gl_d, \lfloor n_0 t\rfloor})$
converges weakly to the SDE corresponding to $\tilde b(x,t),
\tilde a(x,t)$. The only thing left is to identify the
limiting SDE from the functions $\tilde b(x,t), \tilde a(x,t)$.
This follows easily, by observing that if $Z$ is a complex
Gaussian with independent standard real and imaginary parts
and $\omega_1,\omega_2\in \CC$ then
\begin{equation*}
    \ev \Re(\omega_1 Z)\Re(\omega_2 Z)=\ev(\omega_1
Z+\overline{ \omega_1 Z})(\omega_2 Z+\overline{ \omega_2
Z})/4 =({\omega_1 \overline \omega_2}+{\omega_2 \overline
\omega_1} )/2= \Re(\omega_1\overline{\omega}_2). \qedhere
\end{equation*}
\end{proof}
Theorem \ref{thm_sdelim} leads to the following corollary.
\begin{corollary}\label{cor_sdelim}
Let $W_t$ be complex Brownian motion with standard real and
imaginary parts and consider the strong solution of the  following one-parameter
family of SDEs
\begin{equation}\nonumber
\sqrt{1-t}\; d\alpha_\lambda  = \frac{\lambda }{2}\;
 dt+ \sqrt{2/\beta}\;\Re( (e^{-i\alpha_\lambda}-1)d W_t).
 \end{equation}
Then
\be
 \alpha_{ \lfloor n_0 t\rfloor, \gl}\cd \alpha_\lambda(t),\qquad \textup{as  $n\to \infty$}\label{e_alphalim}
\ee
where the convergence is in the sense of finite dimensional
distributions for $\lambda$ and in path-space $D[0,1)$ for
$t$.
\end{corollary}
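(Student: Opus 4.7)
The plan is to deduce this from Theorem \ref{thm_sdelim} by taking differences and applying L\'evy's characterization to identify the resulting driving noise as a standard complex Brownian motion.

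First, given any $\lambda_1,\dots,\lambda_d\in\R$, I would apply Theorem \ref{thm_sdelim} to the enlarged family $(0,\lambda_1,\dots,\lambda_d)$, which gives the joint convergence
\[
\big(\varphi_{\lfloor n_0 t\rfloor,0},\varphi_{\lfloor n_0 t\rfloor,\lambda_1},\dots,\varphi_{\lfloor n_0 t\rfloor,\lambda_d}\big)\cd\big(\varphi_0(t),\varphi_{\lambda_1}(t),\dots,\varphi_{\lambda_d}(t)\big)
\]
in $D[0,1)^{d+1}$. Since subtraction is continuous on $D[0,1)$, the continuous mapping theorem turns this into joint convergence of $\alpha_{\lfloor n_0 t\rfloor,\lambda_i}=\varphi_{\lfloor n_0 t\rfloor,\lambda_i}-\varphi_{\lfloor n_0 t\rfloor,0}$ to $\tilde\alpha_{\lambda_i}(t):=\varphi_{\lambda_i}(t)-\varphi_0(t)$. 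It remains to identify the distribution of $\tilde\alpha_\lambda$ as the solution of the SDE in the statement.

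Next I would subtract the SDE \eqref{e_sdelimfi} for $\varphi_0$ from that for $\varphi_\lambda$. The drift terms involving $\rho$ and the $dB$ term do not depend on $\lambda$ and therefore cancel, leaving
\[
d\tilde\alpha_\lambda=\frac{\lambda}{2\hat s}\,dt+\frac{\sqrt 2}{\sqrt\beta\,\hat s}\,\Re\!\left((e^{-i\varphi_\lambda}-e^{-i\varphi_0})\,dZ\right).
\]
Using $e^{-i\varphi_\lambda}-e^{-i\varphi_0}=e^{-i\varphi_0}(e^{-i\tilde\alpha_\lambda}-1)$, this becomes
\[
d\tilde\alpha_\lambda=\frac{\lambda}{2\hat s}\,dt+\frac{\sqrt 2}{\sqrt\beta\,\hat s}\,\Re\!\left((e^{-i\tilde\alpha_\lambda}-1)e^{-i\varphi_0}\,dZ\right).
\]
Define $W$ by $dW=e^{-i\varphi_0}\,dZ$. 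Because $e^{-i\varphi_0}$ is adapted and has modulus one, a direct computation with $\theta=e^{-i\varphi_0}=\cos\varphi_0-i\sin\varphi_0$ shows $d\langle\Re W\rangle=d\langle\Im W\rangle=dt$ and $d\langle\Re W,\Im W\rangle=0$, so L\'evy's characterization identifies $W$ as a complex Brownian motion with standard independent real and imaginary parts. Substituting $dW$ and using $\hat s=\sqrt{1-t}$ yields exactly
\[
\sqrt{1-t}\;d\tilde\alpha_\lambda=\frac{\lambda}{2}\,dt+\sqrt{2/\beta}\,\Re\!\left((e^{-i\tilde\alpha_\lambda}-1)\,dW_t\right),
\]
with $\tilde\alpha_\lambda(0)=0$ since $\varphi_\lambda(0)=\varphi_0(0)=\pi$.

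Finally, on any compact subinterval $[0,T]\subset[0,1)$ the coefficients $\lambda/(2\sqrt{1-t})$ and $\sqrt{2/\beta}\,(e^{-i\alpha}-1)/\sqrt{1-t}$ are uniformly Lipschitz in $\alpha$ and bounded in $t$, so strong existence and pathwise (hence weak) uniqueness hold for the system indexed by $(\lambda_1,\dots,\lambda_d)$ on $[0,T]$. Therefore $\tilde\alpha_\lambda$ coincides in law with the strong solution $\alpha_\lambda$ of the stated SDE, yielding \eqref{e_alphalim}. The only nontrivial step is the L\'evy reduction, and that is a textbook computation; the rest is mechanical subtraction of SDEs together with the continuous mapping argument applied to the output of Theorem \ref{thm_sdelim}.
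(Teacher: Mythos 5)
Your approach is essentially the paper's: the authors also subtract the SDE \eqref{e_sdelimfi} at $\lambda=0$ from the one at general $\lambda$, observe that the $\rho$-dependent drift and the $dB$ term cancel, factor $e^{-i\varphi_\lambda}-e^{-i\varphi_0}=e^{-i\varphi_0}(e^{-i\alpha_\lambda}-1)$, and absorb the unimodular adapted factor $e^{-i\varphi_0}$ into the complex Brownian motion to produce $W$. The paper compresses all of this to a single line, and the L\'evy identification and the pathwise uniqueness on $[0,T]\subset[0,1)$ that you spell out are exactly what make that compression legitimate.

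There is, however, one concrete gap. Theorem \ref{thm_sdelim} carries the standing hypothesis that $n_0/n\to 1/(1+\nu)$ for some $\nu\in[0,\infty]$, whereas Corollary \ref{cor_sdelim} is stated without any such assumption --- under the standing scaling $n_0^{-1}\nn^{2/3}\to 0$, the ratio $n_0/n$ (equivalently $\nn^2/(4n_0)$) need not converge. You invoke Theorem \ref{thm_sdelim} directly, which is not justified for a general sequence. The paper's proof closes this with a subsequence argument: for any subsequence one can extract a further subsequence along which $\nn^2/(4n_0)$ converges to some $\nu\in[0,\infty]$, apply Theorem \ref{thm_sdelim} there, and observe that the limit law after subtraction is the same SDE for every $\nu$ (precisely because the $\nu$-dependent drift and the $dB$ term cancel). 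Since convergence in distribution is characterized by every subsequence having a further subsequence with the common limit, \eqref{e_alphalim} follows. Your argument is correct once this preliminary reduction is added; without it the very first step of the proof has an unverified hypothesis.
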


\begin{proof}
If $ \nn^2/(4n_0)$ converges to a finite or infinite value as
$n\to\infty$ then the statement follows immediately with $W_t=e^{i
\varphi_{0}(t)}Z_t$. This implies that  for any subsequence of $n$
we can choose a further subsequence along which (\ref{e_alphalim})
holds, a characterization of convergence.
\end{proof}

\subsection{Why are the limits in different windows the
same? Universality and non-universality}\label{s_sch}

This subsection is meant to explain why the continuum limit
of the relative phase function does not depend on the
choice of the scaling window $\mu_n$. In order to do that,
we will discuss a more general model where this is not
necessarily true.

The discussion of this section is not an integral part of
the proof of the main theorem; the goal is to provide some
additional insight for the results.

\paragraph{A more general  model.} The following is a generalization of the model
\eqref{e_matrix}. Consider random tridiagonal $n\times n$
matrices with diagonal elements $X_0,X_1,X_2,\dots$ and
off-diagonal elements $s_1, s_2, s_3,\dots$ and $s_0+Y_0,
s_1+Y_1, s_2+Y_2\dots$, see (\ref{e_matrix}). The random
variables $X_i, Y_i$ are independent  with mean
approximately zero, variance approximately $2/\beta$ and a
bounded fourth moment. The deterministic numbers $s_\ell$
depend on $n$ and are approximately $\sqrt{n} s(\ell/n),$
where $s(t)$ is a nonnegative, sufficiently smooth
decreasing function on [0,1].  In the case of
$\beta$-ensembles we have $s(t)=\sqrt{1-t}$.
%
%
%

We will try to understand the  point process limit of the eigenvalues of these tridiagonal matrices near $\mu_n$ where the scaling parameter $\mu_n$ will be in the interval  $[s(1) \sqrt{n},s(0) \sqrt{n}]$. It turns out that in this more general setup the arguments of the previous two sections follow through essentially without change. This is the main reason why we expressed everything  in terms of $\rho, s, \hat s$, instead of using the sometimes more simple explicit values.

We first have to identify the  scaling around $\mu_n$ so
that we have a nontrivial limit, for this we consider
equation (\ref{e_crho}). We define $n_0\in[0,n]$ as the
unique value for which
\begin{equation}
s(n_0/n)={\mu_n}/{\sqrt{4n}}.\label{e_n0_gen}
\end{equation}
Then for $\ell\in [0,n_0]$ the complex number $\rho_\ell\in \HH$ defined through (\ref{e_crho}) (see also (\ref{e_crho2})) is of unit length
and our scaling around $\mu_n$ will be given by (\ref{e_scalingnu}):
\[
\Lambda=\mu_n+\lambda/(2\sqrt{n_0}).
\]
\begin{figure}
\begin{center}
\includegraphics*[width=300pt]{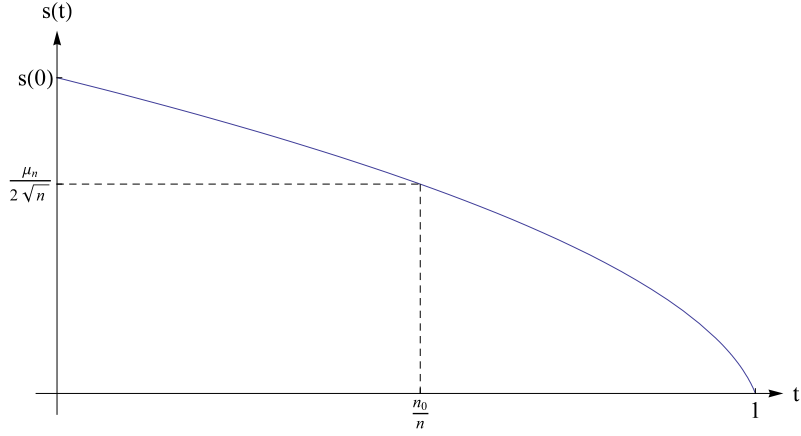}\phantom{MMMM}\end{center}
\caption{The definition of the scaling parameter $n_0$}
\end{figure}
The subsequent computations, i.e.~the introduction of the
slowly varying phase function, the single-step asymptotics
and the continuum limit of the phase evolution can be
carried out the same way as we have done in Subsections
\ref{s_model} and \ref{s_svp} and Section \ref{s_SSE}. The
assumption $n_0^{-1} \mu_n^{2/3}\to 0$ as $n\to \infty$
will ensure that the arising error terms are negligible.

Thus, according to Theorem \ref{thm_sdelim} if $n_0/n$ converges to a constant as $n\to \infty$ then $\varphi_{\lfloor n_0 t\rfloor,\lambda}\cd \varphi_\lambda (t)$ where  $\varphi_\gl(t)$ is the solution of (\ref{e_sdelimfi}). This gives the following limit for the relative phase function $\alpha_{\lfloor n_0 t\rfloor,\lambda}$:
\begin{equation}
\hat s \,d\alpha_\gl=\frac{\gl}{2} dt+\sqrt{2/\beta} \Re\left((e^{-i\alpha_\gl}-1) dW_t  \right), \qquad \alpha_\gl(0)=0.\label{e_alphagen}
\end{equation}
Here $\hat s$ is given by the $n\to \infty$ limit of
 \begin{equation}\label{trafo}
 \hat
s_n(t)=\sqrt{\frac{s(n_0/n \, t)^2-s(n_0/n)^2}{n_0/n}}.
 \end{equation}
There are various ways of interpreting equation (\ref{e_alphagen}),
perhaps the most intuitive is via the Brownian carousel
which is already apparent in the discrete evolution, see
Section \ref{ss_discrete}.

The fact that the limiting equation depends on $s$ through
the function $\hat s$ explains two phenomena. First,  in
the $\beta$-ensemble case $s(t)=\sqrt{1-t}$ and thus
(\ref{trafo}) gives that $\hat s(t)=\sqrt{1-t}$
\emph{regardless} of the value of the limit of $n_0/n$.
This shows why all limits will be governed by the same
stochastic differential equation, regardless on the choice
of the scaling parameter. However, in the more general
model, non-universality holds; the limiting stochastic
differential equation \eqref{e_alphagen} depends not only
on the limit of $s$ but also on the scaling window.

Second, consider a general $s$, with $s'(0)<0$, and choose $\mu_n$
so that
\begin{equation}
     (2\sqrt{n} s(0)-\mu_n)n^{1/6}\to \infty, \quad \textup{and} \quad
(2\sqrt{n} s(0)-\mu_n)n^{-1/2}\to 0 .\label{e_scaling_gen}
\end{equation}
This means that scaling parameter $\mu_n$ is close, but not too close to the edge of the spectrum $2s(0)\sqrt{n}$.
Since $\mu_n/\sqrt{4n}\to s(0)$, by (\ref{e_n0_gen}) we have $n_0/n\to 0$ and
\[
n_0=s'(0)^{-1} \sqrt{n} (\sqrt{n} s(0)-\mu_n/2)+o(n_0).
\]
Thus $n_0^{-1} \mu_n^{2/3}\to 0$, so we can apply our previous results.  From (\ref{trafo}) we get
\[
\hat s(t)=\lim \hat s_n(t)=c \sqrt{1-t},\qquad c=|(s^2)'(0)|^{1/2}
\]
which means that  the limiting sde (\ref{e_alphagen}) is the same as in the $\beta$-ensemble case, after a linear rescaling with a new parameter $\tilde \beta=\beta c^2$.
This means that even for a general choice of $s$ the point process limit of the eigenvalues in the scaling regime (\ref{e_scaling_gen}) is given universally by  the $\Sineb$ process.

A similar statement of universality holds for a class of 1-dimensional discrete random
Schr\"odinger operators with tridiagonal matrix representation. Consider the
a symmetric tridiagonal matrix with diagonal
and off-diagonal terms
$$
\tilde X_0,\,\tilde X_1,\,\ldots \qquad \tilde Z_0/2+s_0,\,\tilde
Z_1/2+s_1,\,\ldots\qquad
$$
where $\tilde X_i,\tilde Z_i$ are independent random variables with mean
approximately zero, variance approximately $\sigma^2$ and a bounded eighth moment. The deterministic numbers $s_\ell$ depend on $n$ and are approximately $\sqrt{n} s(\ell/n)$, where $s(t)$ is again a nonnegative, sufficiently smooth
decreasing function on [0,1].
 This gives the matrix representation of   a 1-dimensional discrete random
Schr\"odinger operator.

The analyze its spectrum, we first conjugate it with a diagonal matrix to transform it into a form similar to (\ref{e_matrix}). Choosing an appropriate diagonal matrix we can transform any off-diagonal pair $(\tilde Z_\ell/2+s_\ell,\tilde Z_\ell/2+s_\ell)$ into $(a_\ell,(\tilde Y_\ell/2+s_\ell)^2/a_\ell)$ with any nonzero $a_\ell$, while  the diagonal elements stay the same.
a simple computation shows that if we set
$a_\ell=\sqrt{n} \,s(\ell/n+1/(2n))\simeq \sqrt{n} \,s((\ell+1)/n)$ then the off-diagonal entries above the diagonal will have mean approximately equal to $\sqrt{n} s(\ell/n)$, variance approximately equal to $\sigma^2$ and a bounded fourth moment.
 Thus the previous results may be applied with $\beta=2/\sigma^2$. In particular near the edge of the spectrum (but not very near: see (\ref{e_scaling_gen})) the point process limit of the eigenvalues will be given universally by the $\Sineb$ process.

We would like to note that the point process limit near the edge of the spectrum (i.e.~when $(\mu_n-2\sqrt{n} s(0))n^{1/6}$ converges to a finite constant) one gets the $\Airyb$ process (see  \RRVlong,  Section 5). This allows us to complete the proof in the general case with arguments analogous to the following section.
To avoid excessive technicalities, we
chose to focus on the beta ensemble case in this paper. We plan to treat the more general
case in detail in a future work.

\section{Asymptotics in the uneventful stretch}
\label{s_uneventful}

Section \ref{s_SSE} describes the stochastic differential equation
limit of the phase function on the first stretch
$[0,n_0(1-\eps)]$. Here we show that in the limit, this stretch
completely determines the eigenvalue behavior.

\subsection{The uneventful middle stretch}\label{s_middle}

 The middle stretch is the discrete
time interval  $[m_1,m_2]$ with
\begin{equation}
m_1=\lfloor n_0 (1-\eps)
\rfloor,\qquad m_2=\lfloor n- \nnn- \kappa\,
(\nn^{\nicefrac{2}{3}}\vee 1)\rfloor\label{e_middle}
\end{equation}
  for
$\eps\in(0,1)$ and $\kappa>0$. The goal of this section is to
prove that if $\alpha_{\ell,\gl}$ is close to an integer
multiple of $2\pi$ after time $m_1$ then it changes little
up to time $m_2$. More precisely, we have
\begin{proposition}\label{l_middlestretch}
There exists a constant $c=c(\bar \gl,\beta)$ so that with
$y=n_0^{\nicefrac{-1}{2}} ( \nn^{\nicefrac{1}{3}}\vee 1)$ we have
\begin{equation}\label{e_masodik_}
\ev \big[\left|
\alpha_{\ell_2,\gl}-\alpha_{\ell_1,\gl}\right|\wedge 1 \big|
\mathcal F_{\ell_1} \big] \le c (\dist(\alpha_{\ell_1,\gl},2\pi
\Z)
 +\sqrt{\eps}+y+\kappa^{-1}),
\end{equation}
for all $\kappa>0, \eps\in(0,1)$, $\gl\le |\bar \gl|$ and $m_1\le
\ell_1\le \ell_2\le m_2$.
\end{proposition}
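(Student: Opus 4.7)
The plan is to control the increment $\alpha_{\ell_2,\lambda}-\alpha_{\ell_1,\lambda}$ by the standard semimartingale decomposition, exploiting the fact that near $2\pi\Z$ the noise in $\alpha$ vanishes like $\sin(\alpha/2)$ while the deterministic drift coming from the $\lambda$-translation $\varupsl L_{\ell,\lambda}$ integrates to an order-$\sqrt\eps$ contribution over the middle stretch. First I would write down the single-step asymptotics for $\alpha$ following the pattern of Proposition~\ref{p_sstepfi}: using Proposition~\ref{p_alpha}\,\ref{alphaevo} together with the quadratic expansion of $\ash$ in Lemma~\ref{l_ash}, one obtains, for $t=\ell/n_0$ and $k=n_0-\ell$,
\begin{eqnarray*}
\ev_\ell[\Delta\alpha_{\ell,\lambda}] &=& \tfrac{1}{n_0}\bigl(\tfrac{\lambda}{2\hat s(t)}+\mbox{osc}\bigr)+\O(k^{-3/2}),\\[-2pt]
\ev_\ell[(\Delta\alpha_{\ell,\lambda})^2] &=& \tfrac{c}{n_0\hat s^2}\sin^2(\alpha_{\ell,\lambda}/2)+\mbox{osc}+\O(k^{-3/2}),
\end{eqnarray*}
where the quadratic-variation factor $\sin^2(\alpha/2)$ arises because the second angular-shift term in Proposition~\ref{p_alpha}\,\ref{alphaevo} is driven by $e^{i\varphi_\lambda}-e^{i\varphi_0}$. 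With this in hand, split $\alpha_{\ell_2,\lambda}-\alpha_{\ell_1,\lambda}=D+M$ into its predictable drift $D$ and its martingale part $M$.

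\paragraph{Drift part.} The dominant contribution to $D$ is the Riemann sum $\sum_{\ell=\ell_1}^{\ell_2-1}\lambda/(2n_0\hat s(\ell/n_0))$, which evaluates to $\lambda(\sqrt{1-\ell_1/n_0}-\sqrt{1-\ell_2/n_0})+O(n_0^{-1/2})$ and is bounded by a constant multiple of $\sqrt\eps$ on the stretch $[m_1,m_2]$. The oscillatory terms of the form $\Re(e^{\pm ix}\eta_\ell^j)$ are handled exactly as in the proof of Lemma~\ref{l_porgosdrift} via the oscillatory-sum estimate (Lemma~\ref{l_blackboxnew}) and produce contributions of order $y+\kappa^{-1}$; the cubic error $O(k^{-3/2})$ summed over $k\in[\kappa(\nn^{2/3}\vee 1),\,\eps n_0]$ is also absorbed into $y+\kappa^{-1}$.

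\paragraph{Martingale part and the main obstacle.} For $M$ I would use a stopping-time argument: let $\tau=\inf\{\ell\ge \ell_1:|\alpha_{\ell,\lambda}-\alpha_{\ell_1,\lambda}|>\delta\}\wedge\ell_2$ for a small $\delta$. Up to $\tau$ one has $|\sin(\alpha_\ell/2)|\le c(\delta+\dist(\alpha_{\ell_1,\lambda},2\pi\Z))$, so the quadratic variation of the stopped martingale is bounded in expectation by $c(\delta+\dist)^2\sum_{\ell=\ell_1}^{\ell_2-1} k^{-1}$, and Doob's $L^2$ inequality controls the probability that $M$ exceeds $\delta/2$. The \emph{one-sided} property in Proposition~\ref{p_alpha}\,\ref{szelep1}, that $\ftp{\alpha_{\ell,\lambda}}$ is nondecreasing, rules out unbounded downward excursions, so on $\{\tau<\ell_2\}$ one of two things must have happened: either $\alpha$ has accumulated a clean upward excursion of size $\delta$ inside its current $2\pi$-window, which the drift-plus-martingale bound forbids with high probability once $\delta$ is chosen comparable to $\sqrt\eps+y+\kappa^{-1}$, or $\alpha$ has crossed an integer multiple of $2\pi$, in which case the martingale has already traveled net distance $\ge 2\pi-\dist(\alpha_{\ell_1,\lambda},2\pi\Z)-|D|$, again contradicting Doob. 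Since $|x|\wedge 1$ is controlled on $\{\tau=\ell_2\}$ by the drift bound $|D|$, and by $1$ on $\{\tau<\ell_2\}$ with small probability, \eqref{e_masodik_} follows with the correct constants. The technical heart is the simultaneous management of the shrinking factor $\hat s^2=k/n_0$, which makes the noise variance $k^{-1}\sum 1$ tend to diverge logarithmically as one approaches $m_2$; it is precisely the cutoff $m_2=\lfloor n-\nnn-\kappa(\nn^{2/3}\vee 1)\rfloor$ together with the oscillatory cancellation of the $\eta_\ell^j$-sums that produces the $\kappa^{-1}$ summand in the bound.
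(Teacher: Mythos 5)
Your plan takes a genuinely different route from the paper (semimartingale decomposition, stopping time, Doob's inequality), and unfortunately it has a gap that I do not see how to close.

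\paragraph{The quadratic variation diverges logarithmically.}
The single-step conditional variance of $\alpha$, namely $\ev_\ell[(\Delta\alpha_{\ell,\gl})^2]$ minus its oscillatory and error parts, is of the form
$\frac{8}{\beta(n_0-\ell)}\sin^2(\alpha_{\ell,\gl}/2)$ (this follows directly from the two-point covariance $a(t,x,y)$ in \eqref{e_drift_zaj} by writing $\Delta\alpha=\Delta\varphi_{\ell,\gl}-\Delta\varphi_{\ell,0}$). This term is non-negative and carries \emph{no} $\eta_\ell^j$ factor, so it does \emph{not} cancel with the van der Corput lemma; oscillatory averaging helps only with the drift, not with the quadratic variation. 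Under your stopping-time bound $\ha_\ell\le d+\delta$ (with $d=\dist(\alpha_{\ell_1,\gl},2\pi\Z)$), the conditional quadratic variation of the stopped martingale over $[\ell_1,\ell_2]$ is therefore of order $(d+\delta)^2\sum_{\ell} (n_0-\ell)^{-1}\approx (d+\delta)^2\log\bigl((n_0-m_1)/(n_0-m_2)\bigr)$, which grows like $\log n_0$ along the middle stretch and is not bounded by the $\kappa^{-1}$ cutoff. Doob then only yields $\pr(\sup|M|>\delta)\lesssim (d+\delta)^2\log(n_0)/\delta^2$, which is useless when $d$ is comparable to $\delta$ (exactly the regime the proposition needs). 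The phrase in your last paragraph, that the $m_2$-cutoff together with oscillatory cancellation of $\eta_\ell^j$-sums controls this, is not correct: the leading variance contribution has no $\eta_\ell$ in front.

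\paragraph{How the paper sidesteps this.}
The paper never looks at the quadratic variation. It exploits the one-sided valve property (Proposition \ref{p_alpha}\,\ref{szelep1}): since $\alpha_\ell$ can never go below the last multiple $a_\diamondsuit$ of $2\pi$ it passed, $\ev\,|\alpha_\ell-a_\diamondsuit\mid\FF_{\ell_1}|=\ev[\alpha_\ell-a_\diamondsuit\mid\FF_{\ell_1}]$ is a pure \emph{drift} quantity, no martingale bound needed. The drift is then controlled by Lemma \ref{l_tightestim1}, whose heart is precisely the delicate oscillatory averaging you tried to invoke -- but note that this cannot be done ``exactly as in Lemma \ref{l_porgosdrift}'': the coefficients in $\ev_\ell[\Delta\alpha]$ involve the \emph{random, not a priori slowly varying} factor $(e^{-i\varphi_{\ell,\gl}}-e^{-i\varphi_{\ell,0}})$, and the paper needs Lemma \ref{l_diszno} to bound $|\ev_\ell[\Delta e^{i\varphi_{\ell,\gl}}-\Delta e^{i\varphi_{\ell,0}}]|$ in terms of $\ev\ha_\ell$ before summation by parts gives usable coefficients $b_\ell\sim k^{-3/2}$. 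Since $\sum b_\ell$ is then bounded (not $\sim\log n_0$), the self-referential bound $x_\ell\le\ldots+\sum b_j x_j$ can be closed by the Gronwall estimate (Lemma \ref{l_turbogw}). The stopping time $T$ in the paper does appear, but only to control the overshoot past $a^\diamondsuit$, not to bound martingale oscillations via Doob.

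\paragraph{Minor point.} Your single-step drift $\ev_\ell[\Delta\alpha]=\tfrac{1}{n_0}(\tfrac{\gl}{2\hat s}+\mbox{osc})+\O(k^{-3/2})$ understates the non-oscillatory error: the deterministic pieces $\zeta_1,\zeta_2$ of $\Delta\alpha$ contribute $\O(n_0^{-1/2}k^{-1/2})$, which is larger than $k^{-3/2}$ once $k\gg\sqrt{n_0}$ (compare \eqref{e_ea0} with \eqref{e_zizi}). This still sums to $\O(\sqrt\eps)$, so it does not by itself break your argument, but it is another place where you need the precise form of the angular-shift estimates rather than the heuristic $\sin^2$ picture.
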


The first step is to estimate
$\Dal_{\ell,\gl}=\alpha_{\ell+1,\gl}-\alpha_{\ell,\gl}$
using the angular shift Lemma \ref{l_ash} with
$z=Z_{\ell,\gl}$ defined in \eqref{e_Zgl}. For the finer
asymptotics of the lemma, the condition $|z|<\nicefrac13$
is needed. For this, we truncate the original random
variables $X_\ell, Y_\ell$. For $m_1\le \ell \le m_2$,
introduce the random variables $\tilde X_\ell$, $\tilde
Y_\ell$ which agree with $X_\ell,Y_\ell$ on the event
\begin{equation}
|X_{\ell}|, |Y_{\ell}|\le \frac1{10} \sqrt{n_0}\, \hat
s(\ell/n_0),\label{e_cutoff}
\end{equation}
and are zero otherwise; this event depends on $n$ via $\hat s$. By
Markov's inequality and  the fourth moment assumption
(\ref{e_mom}) for $X_\ell,Y_\ell$, this event has probability at
least $1-c(n_0-\ell)^{-2}$. Summing this for $\ell\le m_2$ shows
that the total probability that the truncation has an effect is at
most $c\kappa^{-1}$. This can be absorbed in the error term
$\kappa^{-1}$ in \eqref{e_masodik_}, so it suffices to prove
Proposition \ref{l_middlestretch} for the truncated random
variables.

To keep the notation under control, we will drop the tildes and
instead modify the assumptions on $X_\ell,Y_\ell$. Namely,
denoting  $k=n_0-\ell$ we assume the bounds \eqref{e_cutoff} and
the modified moment asymptotics
\begin{eqnarray}
\begin{array}{c|c|c|c}
  \mbox{moment} & 1^{\textup{st}} & 2^{\textup{nd}} & 4^{\textup{th}} \\ \hline
   & \O(k_{\phantom 0}^{\nicefrac{-3}{2}}) & 2/\beta+\O(k_{\phantom 0}^{-1}) & \O(1)
\end{array}\nonumber
\end{eqnarray}
which follow from the original ones (\ref{e_mom}) and our choice
of truncation. With $p,q$ defined in \eqref{e_pqdef}, this changes
the moment asymptotics of $V_{\ell}$ (\ref{e_vV}) the following way:
\begin{equation}
\begin{array}{c|c|c|c}
    \ev V_{\ell} &\ev V_\ell^2&  \ev |V_\ell^2|& \ev |V_{\ell}|^4 \\ \hline
    \O (k_{\phantom 0}^{-2})&\frac1{n_0}q(t)+\O(k_{\phantom 0}^{-2})& \frac1{n_0}p(t)+\O (k_{\phantom 0}^{-2})&\O(k_{\phantom 0}^{-2})
\end{array}\label{e_Vmom2}\\
\end{equation}

\begin{proposition}[Single-step asymptotics for $\alpha_{\ell,\gl}$]\label{p_sstepa}
There exists $k^*=k^*(\beta,\bar\lambda)$ so that for every
$\ell\le n_0-k^*$ and $|\lambda|<\bar \lambda$ we have the
following.
\begin{eqnarray}
\ev_\ell \left[\Dal_{\ell,\gl}\right] \nonumber
 &=&\phantom{+}
\frac1{n_0} \Re\Big[(
e^{-i \varphi_{\ell,\gl}}-e^{-i \varphi_{\ell,0}})
\eta_{\ell}
 (-v_\gl-i q/2)
 \Big]\\
&&  \label{e_ea0}
 +
 \frac1{n_0} \Re\left[
 \frac{i q}{4}
(
e^{-2i \varphi_{\ell,\gl}}-e^{-2i \varphi_{\ell,0}})
\eta_{\ell}^2
 \right]+
 \O( n_0^{\nicefrac{-1}{2}} k_{\phantom 0}^{\nicefrac{-1}{2}} +k_{\phantom 0}^{\nicefrac{-3}{2}}\ha_{\ell,\gl})
 \\
 &=&
 \O( k_{\phantom 0}^{-1}  \ha_{\ell,\gl}+ n_0^{\nicefrac{-1}{2}}  k_{\phantom 0}^{\nicefrac{-1}{2}})
 \label{e_ea1}
 \\
 \ev_\ell \left[ (\Dal_{\ell,\gl})^2\right]
 &=&
 \O( k_{\phantom 0}^{-1}    \ha_{\ell,\gl}+ n_0^{-1} k_{\phantom 0}^{-1})
 \label{e_ea2}
 \\
 \ev_\ell | \Dfi_{\ell,\gl} \Dal_{\ell,\gl}|
 &=&
 \O(  \ha_{\ell,\gl} k_{\phantom 0}^{-1}).
 \label{e_extraalfa}
\end{eqnarray}
The functions $v_\gl=v_{\gl}(\ell/n_0)$, $q=q(\ell/n_0)$
are defined in (\ref{e_vbecs}, \ref{e_pqdef}), and
$\ha_{\ell,\gl}$ denotes the distance between
$\alpha_{\ell,\gl}$ and the set $2\pi \ZZ$.
\end{proposition}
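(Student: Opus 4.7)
The strategy is to write $\Delta\alpha_{\ell,\gl} = \Delta\varphi_{\ell,\gl} - \Delta\varphi_{\ell,0}$ and subtract the single-step asymptotics of Proposition \ref{p_sstepfi} applied to each piece. Since both $\varphi_{\ell,\gl}$ and $\varphi_{\ell,0}$ are $\mathcal{F}_\ell$-measurable, the conditional expectations in \eqref{e_onestepfi} give $\ev_\ell\Delta\varphi_{\ell,\gl}$ and $\ev_\ell\Delta\varphi_{\ell,0}$ after evaluation at the actual phases. The drift contribution $(b_\gl-b_0)/n_0 = \gl/(2n_0\hat s) = O(n_0^{-1/2}k^{-1/2})$ is absorbed into the stated error. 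For the oscillatory part, the key algebraic identity is
\[
(-v_\gl - iq/2)e^{-i\varphi_{\ell,\gl}} - (-v_0 - iq/2)e^{-i\varphi_{\ell,0}} = (-v_\gl - iq/2)(e^{-i\varphi_{\ell,\gl}} - e^{-i\varphi_{\ell,0}}) + \tfrac{\gl}{2\hat s}e^{-i\varphi_{\ell,0}},
\]
using $v_0 - v_\gl = \gl/(2\hat s)$. Multiplying by $\eta_\ell$, taking $\Re$ and dividing by $n_0$, the first term reproduces exactly the coefficient $(-v_\gl - iq/2)$ of the main term in \eqref{e_ea0}, while the leftover $(\gl/(2n_0\hat s))\Re[e^{-i\varphi_{\ell,0}}\eta_\ell]$ is again of order $n_0^{-1/2}k^{-1/2}$. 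The second bracket $\Re[(iq/4)(e^{-2i\varphi_{\ell,\gl}} - e^{-2i\varphi_{\ell,0}})\eta_\ell^2]$ falls out directly, since the $\eta^2$ coefficient in $\mathrm{osc}_1$ does not depend on $\gl$.

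The naive $O(k^{-3/2})$ remainder inherited term by term from Proposition \ref{p_sstepfi} is too crude for \eqref{e_ea0}, which requires $O(\hat\alpha_{\ell,\gl}k^{-3/2})$. The point is that the cubic remainder supplied by Lemma \ref{l_ash} has the explicit form $(\bar w - \bar v)F(z)$ with $|F(z)|\le c|z|^3$. Since both $\gl$- and $0$-expansions use $v=-1$, the two remainders telescope: the change of $w$ from $e^{i\varphi_{\ell,\gl}}\bar\eta_\ell$ to $e^{i\varphi_{\ell,0}}\bar\eta_\ell$ supplies the missing factor $|e^{-i\varphi_{\ell,\gl}} - e^{-i\varphi_{\ell,0}}|\le C\hat\alpha_{\ell,\gl}$, giving $O(\hat\alpha_{\ell,\gl}k^{-3/2})$, while the change of $z$ from $Z_{\ell,\gl}$ to $Z_{\ell,0}$ (a shift by only $\gl/(2n_0\hat s)$) contributes $O(n_0^{-1/2}k^{-3/2}\cdot \ev|Z|^2)$, absorbable into the $O(n_0^{-1/2}k^{-1/2})$ error. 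The cruder bound \eqref{e_ea1} then follows from \eqref{e_ea0} by noting $|v_\gl|,|q|=O(n_0/k)$ and $|e^{\pm i\varphi_{\ell,\gl}} - e^{\pm i\varphi_{\ell,0}}| \le C\hat\alpha_{\ell,\gl}$.

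For \eqref{e_ea2} and \eqref{e_extraalfa}, I would work with the coarser linear truncation in Lemma \ref{l_ash}, namely $\Delta\varphi_{\ell,\gl} = -\Re[(1+e^{-i\varphi_{\ell,\gl}}\eta_\ell)Z_{\ell,\gl}] + O(|Z_{\ell,\gl}|^2)$. Subtracting the $\gl=0$ version yields
\[
\Delta\alpha_{\ell,\gl} = -\Re\bigl[(1+e^{-i\varphi_{\ell,\gl}}\eta_\ell)(Z_{\ell,\gl} - Z_{\ell,0})\bigr] - \Re\bigl[(e^{-i\varphi_{\ell,\gl}} - e^{-i\varphi_{\ell,0}})\eta_\ell Z_{\ell,0}\bigr] + O(|Z|^2),
\]
a deterministic piece of size $O(n_0^{-1/2}k^{-1/2})$ plus a random piece of size comparable to $\hat\alpha_{\ell,\gl}|Z_{\ell,0}|$. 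Squaring and using the truncated moments \eqref{e_Vmom2}, the dominant contribution to $\ev_\ell(\Delta\alpha_{\ell,\gl})^2$ is $\hat\alpha_{\ell,\gl}^2\,\ev|Z_{\ell,0}|^2 = O(\hat\alpha_{\ell,\gl}^2/k) \le O(\hat\alpha_{\ell,\gl}/k)$, and the remaining cross and quadratic-error terms combine into $O(n_0^{-1}k^{-1})$. A parallel computation for the product $\Delta\varphi_{\ell,\gl}\Delta\alpha_{\ell,\gl}$, keeping only the surviving second moments of $V_\ell$, yields \eqref{e_extraalfa}. The main technical obstacle throughout is precisely this bookkeeping: every surviving error term in expectation must carry either a factor of $\hat\alpha_{\ell,\gl}$ (coming from the phase-difference cancellation between $\varphi_\gl$ and $\varphi_0$) or the deterministic smallness factor $\gl/(n_0\hat s)$ (from $Z_{\ell,\gl} - Z_{\ell,0}$), rather than the larger $O(k^{-1})$ that a naive triangle inequality would give.
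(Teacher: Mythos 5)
Your approach is correct in outline but takes a genuinely different route from the paper. The paper proves Proposition~\ref{p_sstepa} via the exact three-term decomposition of Proposition~\ref{p_alpha}~\ref{alphaevo}, writing $\Dal_{\ell,\gl}=\zeta_1+\zeta_2+\zeta_3$, where $\zeta_1,\zeta_2$ involve only the $\lambda$-dependent operator $(\varupsl{L}_{\ell,\gl})^{\varupsl{T}_\ell}$ (hence are deterministically $O(n_0^{-1/2}k^{-1/2})$) and $\zeta_3=\ash(\varupsl{S}_{\ell,0},e^{i\varphi_{\ell,\gl}}\bar\eta_\ell,e^{i\varphi_{\ell,0}}\bar\eta_\ell)$ has $|v-w|\le\ha_{\ell,\gl}$ \emph{built in}, so the factor $\ha_{\ell,\gl}$ drops directly out of the $|w-v|$ coefficient in every estimate of Lemma~\ref{l_ash} without further argument. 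You instead subtract the two Proposition~\ref{p_sstepfi} expansions. Your algebraic identity reconciling $\mathrm{osc}_1$ with the main term of \eqref{e_ea0} (using $v_0-v_\gl=\gl/(2\hat s)$ exactly) is correct. But to obtain the refined error, you must show that the opaque $O(k^{-3/2})$ remainders from Proposition~\ref{p_sstepfi} telescope, and the assertion that this follows because the Lemma~\ref{l_ash} error has the form $(\bar w-\bar v)F(z)$ is incomplete: the function $F=\eta_{v,w}$ still carries $w$- and $z$-dependence beyond the displayed prefactor, so you need the additional bounds $\partial_w\eta_{-1,w}=O(|z|^3)$ and $\partial_z\eta_{-1,w}=O(|z|^2)$, obtainable by differentiating the $h_{v,w}'''$ formula but not immediate from the stated $|\eta|\le c|z|^3$. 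The same supplementary step is needed for the quadratic remainder $\ep_2$ in your \eqref{e_ea2} and \eqref{e_extraalfa} computations, and the cross terms there require a careful split (into the $w$-shift, of size $O(\ha_{\ell,\gl})$, and the $z$-shift, of size $O(n_0^{-1/2}k^{-1/2})$) before they fit inside $O(\ha_{\ell,\gl}k^{-1}+n_0^{-1}k^{-1})$. In short: the direct-subtraction route works, but it carries a smoothness obligation that the paper's decomposition was designed to avoid, since there the $\ha_{\ell,\gl}$ factor appears at the input to Lemma~\ref{l_ash} rather than having to be extracted retroactively from its remainders.
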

\begin{proof}
By choosing a large enough $k^*\ge 1$ we can assume that for $\ell\le n-k^*$
\[
\frac{\bar \gl}{k^2}\le \frac1{10}, \qquad |v_{\ell,\gl}|\le \frac1{10}
\]
which together with (\ref{e_cutoff}) guarantees that the
random variable defined in (\ref{e_Zgl}) satisfies $
|Z_{\ell,\gl}|\le \nicefrac13$. The proof of the
proposition relies on the evolution rule, Proposition
\ref{p_alpha}
 \ref{alphaevo},
$$\Delta \alpha_{\ell,\gl}=
\ash(( \varupsl{L}_{ \ell,\gl})^{\varupsl{T}_\ell},-1,e^{i
\varphi_{\ell,\gl}} \bar
\eta_\ell)+\ash(\varupsl{S}_{\ell,0},e^{i
\varphi_{\ell,\gl}^*} \bar \eta_\ell,e^{i
\varphi_{\ell,\gl}} \bar
\eta_\ell)+\ash(\varupsl{S}_{\ell,0},e^{i
\varphi_{\ell,\gl}} \bar \eta_\ell,e^{i \varphi_{\ell,0}}
\bar \eta_\ell)
$$
whose terms we denote $\zeta_1,\zeta_2,\zeta_3$. First we show that
$\zeta_1,\zeta_2$ are small. By the definition \eqref{e_Ldef} of
$L$ we have
 $$\left|i.\left(( \varupsl{L}_{ \ell,\gl})^{\varupsl{T}_\ell}\right)^{-1}-i\right|= \left|\frac1{n_0}\,
\frac{\gl}{2 \hat s}\right|=\left|\frac{\gl}{\sqrt{k
n_0}}\right|\le \frac{1}{10}.
 $$
This estimate with the third bound of  Lemma \ref{l_ash}  gives
\begin{equation}\label{e_zizi}
\zeta_1=\varphi^*_{\ell,\gl}-\varphi_{\ell,\gl}=\O(n_0^{\nicefrac{-1}{2}}
k_{\phantom 0}^{\nicefrac{-1}{2}}).
\end{equation}
Applying again the third bound of Lemma \ref{l_ash} with
$|Z_{\ell,0}|\le\nicefrac13$ and (\ref{e_zizi}) gives
\begin{eqnarray*}
\zeta_2&=&\O(\varphi^*_{\ell,\gl}-\varphi_{\ell,\gl})=\O(n_0^{\nicefrac{-1}{2}}
k_{\phantom 0}^{\nicefrac{-1}{2}}).
\end{eqnarray*}
For $\zeta_3$ we  use the first estimate of Lemma \ref{l_ash} and note
that  in our case $|v-w|$ equals
\begin{equation}
|e^{i \varphi_{\ell,\gl}}-e^{i \varphi_{\ell,0}}|=|e^{i \alpha_{\ell,\gl}}-1|\le \ha_{\ell,\gl}.\label{e_triv1}
\end{equation}
Thus  with $Z=Z_{\ell,0}$ we have
\begin{equation}\label{e_ash_alal1}
\zeta_3=-\Re\left[ (e^{-i \varphi_{\ell,\gl}}-e^{-i \varphi_{\ell,0}})\eta_{\ell}(
Z+i Z^2/2)
 \right]+\Re\left[ i (e^{-2i \varphi_{\ell,\gl}}-e^{-2i \varphi_{\ell,0}})\eta_{\ell} Z^2/4\right]
 +\O(\ha_{\ell,\gl} Z^3).\end{equation}
Since $Z$ is independent of $\FF_\ell\,$ and
$\ha_{\ell,\gl}\in \FF_\ell\,$, the error term becomes $\O(
\ha_{\ell,\gl}k_{\phantom 0}^{\nicefrac{-3}{2}})$ after
taking conditional expectation. The definition \eqref{e_Zgl} of
$Z_{\ell,\lambda}$ and the moment bounds (\ref{e_Vmom2})
imply that replacing $\ev Z$ and $\ev Z^2$ by
$v_{\gl}(\ell/n_0)$ and $q(\ell/n_0)$ gives error terms of
order $\O(k_{\phantom 0}^{-2})$. Because of (\ref{e_triv1})
and
\begin{equation}
|e^{-2i \varphi_{\ell,\gl}}-e^{2i \varphi_{\ell,0}}|=|e^{2 i \alpha_{\ell,\gl}}-1|\le 2 \ha_{\ell,\gl}\label{e_triv2}
\end{equation}
we get (\ref{e_ea0}). Using the explicit form of $v_\gl$
and $q$ and (\ref{e_triv1}, \ref{e_triv2}) again, we obtain
(\ref{e_ea1}). The other estimates
follow similarly from the first-order
version of (\ref{e_ash_alal1}) and Proposition
\ref{p_sstepfi}.
\end{proof}

The following lemma relies on the careful use of single-step
bounds and oscillatory sum estimates. We postpone the proof
till Section \ref{a_tight}.

\begin{lemma}\label{l_tightestim1} Recall the definition of  $m_1, m_2$ from (\ref{e_middle}).
There exist $c_0,c_1$
depending on $\bar \lambda, \beta$ so that  with
$y=n_0^{\nicefrac{-1}{2}} ( \nn^{\nicefrac{1}{3}}\vee1)$ we
have
 \begin{eqnarray*}
|\ev [\alpha_{\ell_2,\gl}-\alpha_{\ell_1,\gl}|\mathcal
F_{\ell_1}]|&\le& c_1 (y+\sqrt{\eps})+\frac{\ev[
\ha_{\ell_2-1}|\mathcal
F_{\ell_1}]}{2}+\sum_{\ell=\ell_1}^{\ell_2-2} b_\ell
\ev[\ha_\ell|\mathcal F_{\ell_1}]
\\
0\;\le\; b_\ell &\le & c_1 \left(k_{\phantom
0}^{\nicefrac{-3}{2}}+ \nn k_{\phantom
0}^{\nicefrac{-5}{2}}+k_{\phantom 0}^{\nicefrac{-3}{2}}
 \nn \mathbf{1}_{k\ge  \nnn}\right),
\end{eqnarray*}
whenever $\kappa>c_0$, $|\lambda|<\bar \lambda$, and $m_1\le
\ell_1\le \ell_2\le m_2$. Here $k=n_0-\ell$.
\end{lemma}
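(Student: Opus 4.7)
The plan is to sum the single-step estimate \eqref{e_ea0} from Proposition \ref{p_sstepa} over $\ell=\ell_1,\ldots,\ell_2-1$ and take the conditional expectation given $\FF_{\ell_1}$. This decomposes $\ev[\alpha_{\ell_2,\gl}-\alpha_{\ell_1,\gl}\gi\FF_{\ell_1}]$ into three parts: the sum of the deterministic error $\O(n_0^{\nicefrac{-1}{2}}k^{\nicefrac{-1}{2}})$, the sum of $\O(k^{\nicefrac{-3}{2}}\ha_{\ell,\gl})$, and two genuine oscillatory sums coming from the main terms in \eqref{e_ea0}.

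The deterministic error sums easily. With $k=n_0-\ell$ and $\ell_1\ge m_1=\lfloor n_0(1-\eps)\rfloor$ one has $n_0^{\nicefrac{-1}{2}}\sum_\ell k^{\nicefrac{-1}{2}}\le c\,n_0^{\nicefrac{-1}{2}}\sqrt{n_0-\ell_1}\le c\sqrt{\eps}$. The $\O(k^{\nicefrac{-3}{2}}\ha_{\ell,\gl})$ piece already has the right shape $\sum b_\ell^{(1)}\ev[\ha_\ell\gi\FF_{\ell_1}]$ with $b_\ell^{(1)}=\O(k^{\nicefrac{-3}{2}})$, supplying one of the three components of the claimed $b_\ell$-bound.

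The remaining two sums take the form $\sum_\ell f_\ell c_\ell \eta_\ell^m$, $m\in\{1,2\}$, where $c_\ell$ is a slowly-varying deterministic coefficient of magnitude $\O(1/k)$ (using $|v_\gl|,|q|\le C n_0/k$ via \eqref{e_vbecs} and \eqref{e_pqdef}), and $f_\ell=e^{-i\varphi_{\ell,0}}(e^{-im\alpha_{\ell,\gl}}-1)$ has modulus at most $m\,\ha_{\ell,\gl}$. The term-by-term bound $\O(\ha_\ell/k)$ is too weak, so I would apply the oscillatory-sum blackbox Lemma \ref{l_blackboxnew} (as used in Lemma \ref{l_porgosdrift}), exploiting $|\rho_\ell|=1$ and $|\rho_{\ell+1}^{2m}-1|$ being of order $\min(\sqrt{k}/\nn,\,1)$. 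Abel summation then produces a boundary term at $\ell=\ell_2-1$ of order $\ha_{\ell_2-1,\gl}\,|c_{\ell_2-1}|/|\rho^{2m}-1|$, plus a telescoping sum $\sum_\ell \sigma_\ell\,\Delta(f_\ell c_\ell)$. After taking $\ev_{\ell_1}$, the boundary term is the source of the $\tfrac12\ev[\ha_{\ell_2-1}\gi\FF_{\ell_1}]$ contribution (the constant $\tfrac12$ is either forced by the leading coefficients in \eqref{e_ea0} or secured by taking $c_0$ large enough). The telescoping sum yields $\sum_\ell b_\ell \ev[\ha_\ell\gi\FF_{\ell_1}]$: the deterministic difference $\Delta c_\ell=\O(1/(kn_0))$ gives the $\nn k^{\nicefrac{-5}{2}}$ piece after dividing by $\sqrt{k}/\nn$; the random difference $\Delta f_\ell$, whose conditional expectation is governed by \eqref{e_extraalfa} and Proposition \ref{p_sstepfi}, gives the $k^{\nicefrac{-3}{2}}\nn\,\mathbf{1}_{k\ge\nnn}$ piece, with the indicator reflecting the crossover of $|\rho^{2m}-1|$ between $\sqrt{k}/\nn$ (when $k\le\nnn$) and $\Theta(1)$ (when $k\ge\nnn$). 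The $y=n_0^{\nicefrac{-1}{2}}(\nn^{\nicefrac{1}{3}}\vee 1)$ term absorbs residual boundary contributions once one uses the terminal scale $k\ge\kappa(\nn^{\nicefrac{2}{3}}\vee 1)$ imposed by $\ell_2\le m_2$ together with $\kappa>c_0$.

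The main obstacle is the careful bookkeeping once one accounts for $f_\ell$ depending on both the oscillating $\varphi_{\ell,0}$ (whose own fluctuations one must not lose) and on $\alpha_{\ell,\gl}$, so that $\Delta f_\ell$ splits into contributions from $\Delta\varphi_{\ell,0}$ and $\Delta\alpha_{\ell,\gl}$ that must be bounded separately via Propositions \ref{p_sstepfi} and \ref{p_sstepa}. Securing the precise coefficient $\tfrac12$ on $\ev[\ha_{\ell_2-1}\gi\FF_{\ell_1}]$, and the exact indicator structure of $b_\ell$ across the regimes $k<\nnn$ and $k\ge\nnn$, constitute the delicate calculation deferred to Section \ref{a_tight}.
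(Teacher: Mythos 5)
Your plan is essentially the one used in Section~\ref{a_tight}: sum (\ref{e_ea0}), absorb the deterministic error $\O(n_0^{\nicefrac{-1}{2}}k^{\nicefrac{-1}{2}})$ into $\sqrt{\eps}$, carry across the $\O(k^{\nicefrac{-3}{2}}\ha_\ell)$ piece, and treat the two oscillatory sums by Abel/van der Corput through Lemma~\ref{l_blackboxnew}, extracting a boundary term at $\ell_2-1$ whose coefficient is forced below $\tfrac12$ by taking $\kappa>c_0$, and Abel-difference terms bounded using Lemma~\ref{l_diszno} (the piece you correctly identify as splitting a $\Delta f_\ell$ into $\Delta\varphi_{\ell,0}$ and $\Delta\alpha_{\ell,\gl}$ parts, governed by Propositions~\ref{p_sstepfi} and \ref{p_sstepa}).

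One point, however, is misattributed and if taken literally would break the argument. You describe the $\eta^2$ sum as handled by a single Abel summation over the whole range, with the $\mathbf{1}_{k\ge\nnn}$ indicator ``reflecting the crossover of $|\rho^{2m}-1|$'' inside the partial-sum bound. But in Lemma~\ref{l_blackboxnew} the partial sums $\sum_{j=\ell_0}^{\ell}\eta_j^2$ carry the factor $\nn^{-1}k_0^{\nicefrac{1}{2}}$ with $k_0=n_0-\ell_0$ fixed by where you \emph{start} summing; if $\ell_0=m_1$ then $k_0\sim\eps n_0$ and this factor is too large for the $\nn k^{\nicefrac{-5}{2}}$ or $k^{\nicefrac{-3}{2}}\nn\mathbf{1}_{k\ge\nnn}$ bounds to come out. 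The paper therefore cuts the $\eta^2$ sum at $\ell^*=n_0-\lfloor\nnn\rfloor$: for $\ell\le\ell^*$ (so $k\ge\nnn$) it forgoes oscillation entirely and uses the raw term-by-term bound $|g_{2,\ell}|\le c\,\nn k^{\nicefrac{-3}{2}}x_\ell$ --- \emph{that} is where the indicator $\mathbf{1}_{k\ge\nnn}$ originates --- while for $\ell>\ell^*$ it Abel-sums with $k_0\le\nnn$, so $\nn^{-1}k_0^{\nicefrac{1}{2}}=\O(1)$ and the factor reduces to $\nn k^{\nicefrac{-1}{2}}+1$, the same as in the $\eta$ sum. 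So the indicator is not a byproduct of the telescoping $\Delta f_\ell$ step; it encodes a genuine change of method across the two $k$-regimes, and without the split your single-pass van der Corput bound would not close. With this correction, your proposal matches the paper's proof.
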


The last ingredient needed for the proof of Proposition
\ref{l_middlestretch} is the following deterministic Gronwall-type
estimate.

\begin{lemma}[Gronwall estimate]\label{l_turbogw}
Suppose that for positive numbers $x_\ell, b_\ell, c$, integers
$\ell_1<\ell_2$  and $\ell=\ell_1+1,\ell_1+2,\ldots,\ell_2$ we
have
\begin{equation}\label{e_gr1}
x_{\ell}\le \frac{x_{\ell-1}}{2}+c+\sum_{j=\ell_1}^{\ell-1} b_j
x_j.
\end{equation}
Then
\[
x_{\ell_2}\le 2\,(x_{\ell_1}+c) \exp\left(3
\sum_{j=\ell_1}^{\ell_2-1} b_j\right).
\]
\end{lemma}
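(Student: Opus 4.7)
The plan is to first eliminate the geometric damping factor $1/2$ by iterating the recursion all the way back to $\ell_1$; this converts the hypothesis into a purely linear Gronwall-type inequality, after which the claim follows from the standard discrete Gronwall lemma.

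First I would write $S_\ell := \sum_{j=\ell_1}^{\ell-1} b_j x_j$ so that the hypothesis reads $x_\ell \le \tfrac{1}{2}x_{\ell-1} + c + S_{\ell-1}$, and observe that $S_\ell$ is nondecreasing in $\ell$ since $b_j,x_j\ge 0$. Substituting this bound into itself $k$ times yields
\begin{equation*}
x_\ell \;\le\; 2^{-k}\,x_{\ell-k} \;+\; c\sum_{i=0}^{k-1}2^{-i} \;+\; \sum_{i=0}^{k-1} 2^{-i}\,S_{\ell-1-i}.
\end{equation*}
Choosing $k=\ell-\ell_1$, using $\sum_{i\ge 0}2^{-i}=2$, and pulling the monotone $S_{\ell-1}$ out of the last sum via $S_{\ell-1-i}\le S_{\ell-1}$, I get the clean bound
\begin{equation*}
x_\ell \;\le\; x_{\ell_1}+2c + 2\sum_{j=\ell_1}^{\ell-1} b_j x_j.
\end{equation*}

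The second step is the classical discrete Gronwall lemma, whose elementary inductive proof uses the telescoping identity $\prod_{j=\ell_1}^{\ell-1}(1+2b_j) = 1 + 2\sum_{j=\ell_1}^{\ell-1} b_j\prod_{i=\ell_1}^{j-1}(1+2b_i)$. Applied with $A=x_{\ell_1}+2c$ and coefficients $2b_j$, it gives
\begin{equation*}
x_{\ell_2} \;\le\; A\prod_{j=\ell_1}^{\ell_2-1}(1+2b_j) \;\le\; (x_{\ell_1}+2c)\exp\!\Big(2\sum_{j=\ell_1}^{\ell_2-1} b_j\Big).
\end{equation*}
Since $x_{\ell_1}\ge 0$ we have $x_{\ell_1}+2c\le 2(x_{\ell_1}+c)$, and the exponent $2$ fits inside $3$, so the stated bound follows (in fact with slightly better constants than claimed). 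There is no substantive obstacle; the only delicate point is that iterating the recursion could in principle amplify the running sum $S_{\ell-1-i}$, but monotonicity of $S_\ell$ combined with the geometric weights $2^{-i}$ keeps it controlled, which is what makes the reduction to a standard Gronwall inequality possible.
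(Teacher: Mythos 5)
Your proof is correct, and it takes a genuinely different and more direct route than the paper's. You iterate the recursion backward $\ell-\ell_1$ times to eliminate the geometrically damped term $x_{\ell-1}/2$, use $\sum_i 2^{-i}<2$ to collapse the accumulated sums, and arrive at the clean linear inequality $x_\ell\le x_{\ell_1}+2c+2\sum_{j=\ell_1}^{\ell-1}b_jx_j$, to which the textbook discrete Gronwall lemma applies immediately. The paper instead substitutes $y_\ell=x_\ell-x_{\ell-1}/2$, expresses each $x_j$ as a geometric mixture of $y$'s, passes to positive parts $y_\ell^+$, reshuffles the coefficients into modified $\tilde b_j$, and obtains the product bound via a summation-by-parts telescope. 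Your reduction to the standard Gronwall lemma is shorter and avoids the sign-tracking needed for the $y^+$ step; the paper's change of variables is somewhat more involved but does essentially the same bookkeeping. Both yield the claimed bound with room to spare (you actually get $\exp(2\sum b_j)$ rather than $\exp(3\sum b_j)$).

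One small internal slip worth fixing: with your definition $S_\ell:=\sum_{j=\ell_1}^{\ell-1}b_jx_j$, the hypothesis reads $x_\ell\le\tfrac12 x_{\ell-1}+c+S_\ell$, not $S_{\ell-1}$, and correspondingly the iterated formula should carry $S_{\ell-i}$ for $i=0,\dots,k-1$, whose maximum is $S_\ell$. Your final "clean bound" is written with the correct range $\sum_{j=\ell_1}^{\ell-1}$, so the slip does not propagate, but as stated the line "pulling the monotone $S_{\ell-1}$ out" is inconsistent with the displayed conclusion and should read $S_\ell$.
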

\begin{proof}
We can assume $\ell_1=0$. Let $y_\ell=x_{\ell}-x_{\ell-1}/2$, so
that we have
\begin{equation}\label{e xiny}
x_\ell = y_\ell+\frac{y_{\ell-1}}{2}+ \frac{y_{\ell-2}}{4}+\ldots+
\frac{y_{1}}{2^{\ell-1}}+\frac{x_0}{2^\ell}.
\end{equation}
Then \eqref{e_gr1} and the positivity of $x_0$ and $b_j$ gives
\begin{equation}\label{e yell}
y_\ell \le c+x_0s +
\sum_{j=1}^{\ell-1}b_j\left(y_\ell+\frac{y_{\ell-1}}{2}+
\frac{y_{\ell-2}}{4}+\ldots+ \frac{y_{1}}{2^{\ell-1}}\right),
\end{equation}
where $s=b_0+\ldots + b_{\ell_2-1}$. Taking positive parts in
\eqref{e yell}, and then summation by parts yields
\begin{equation}\label{e yellp}
y_\ell^+ \le ( c+x_0s) + \sum_{j=1}^{\ell-1}\tilde b_j y_j^+
\end{equation}
with $\tilde b_j=b_j+b_{j-1}/2+\ldots +
b_{\ell_2-1}/2^{\ell_2-j-1}$. Let $\ell_3$ be so that $1\le
\ell_3\le \ell_2$.
 We now multiply the inequality \eqref{e yellp} by
$\tilde b_\ell(1+\tilde b_{\ell+1})\cdots(1+\tilde
b_{{\ell_3}-1})$ and sum it for $1 \le \ell \le \ell_3-1$. We add
\eqref{e yellp} again with $\ell=\ell_3$. After cancellations, we
get
$$
y_{\ell_3}^+ \le (c+x_0s)\prod_{j=1}^{\ell_3-1} (1+\tilde b_j) \le
(c+x_0s) \exp\left(\sum_{j=1}^{\ell_2-1} \tilde b_j\right) \le
(c+x_0s) e^{2s}.
$$
Applying this inequality for all the $y$ terms in \eqref{e xiny}
with $\ell=\ell_2$  we get
\begin{equation*}
x_{\ell_2}\le 2(c+x_0s)e^2s+x_0\le
2(x_0+c)e^{3s}.\qedhere \end{equation*}
\end{proof}

\begin{proof}
[Proof of Proposition \ref{l_middlestretch}] For this proof, let
$a=\alpha_{\ell_1,\gl}$, and define
$a_\diamondsuit,a^\diamondsuit\in 2\pi \Z$ so that
$[a_\diamondsuit,a^\diamondsuit)$ is an interval of length $2\pi$
containing $a$. We condition on the $\sigma$-field $\mathcal
F_{\ell_1}$, so in this proof $\ev$ denotes the corresponding
conditional expectation. We also drop the index $\lambda$ from
$\alpha$. We will show that there exists $c_0$ so that if
$\kappa>c_0$, then with the quantifiers of the proposition
\begin{eqnarray}\label{e_tightness1}
\ev |\alpha_{\ell_2}-a_\diamondsuit |& \le& c_1 (
(a-a_\diamondsuit) + \sqrt{\eps}+y),
\\
\ev |\alpha_{\ell_2}-a^\diamondsuit| &\le& c_1 (
(a^\diamondsuit-a)+ \sqrt{\eps}+y)\label{e_tightness3}.
\end{eqnarray}
The claim of the proposition follows from this by an application
of the triangle inequality to the stronger bound. The additional
condition $\kappa>c_0$ is treated via the error term $1/\kappa$.
%

Lemma \ref{l_tightestim1}  provides the bound
 \begin{eqnarray*}
|\ev \alpha_\ell- a_\diamondsuit|&\le& (a-a_\diamondsuit)+c
(y+\sqrt{\eps})+\ev \ha_{\ell-1}/2+\sum_{j=\ell_1}^{\ell-2} b_j
\ev \ha_j.
 \end{eqnarray*}
Note that $\alpha$ never goes below an integer multiple of $2\pi$
that it passes (Proposition \ref{p_alpha} \ref{szelep1}), so $\alpha_{\ell}\ge
a_\diamondsuit$ for all $\ell\ge \ell_1$. This means that for
$\ell\ge \ell_1$ we have $\ha_\ell\le \alpha_\ell-a_\diamondsuit$
and with $x_\ell=\ev  |\alpha_\ell-a_\diamondsuit|$ we have the
bound
\begin{eqnarray}\label{Gprep}
x_\ell&\le& (a-a_\diamondsuit)+c
(y+\sqrt{\eps})+x_{\ell-1}/2+\sum_{j=\ell_1}^{\ell-2} b_j x_j.
\end{eqnarray}
According to  Lemma \ref{l_tightestim1} we can bound the
sum of the coefficients $b_\ell$ as
\[
\sum_{\ell=\ell_1}^{\ell_2-2} b_\ell\le c
(k_2^{\nicefrac{-1}{2}}+ \nn
k_2^{\nicefrac{-3}{2}}+1)\le c'
\]
which means that  (\ref{e_tightness1}) follows via the
Gronwall-type estimate of Lemma \ref{l_turbogw}.

Next, we consider the first time $T\ge \ell_1$ so that
$\alpha_{T}-a^\diamondsuit\ge 0$. Proposition \ref{p_alpha} \ref{alphaevo}
breaks one step of the evolution of  $\alpha$ into two
parts, from $\alpha_\ell$ to $\alpha_{\ell+1}^*$ and from
$\alpha_{\ell+1}^*$ to $\alpha_{\ell+1}$. It shows that
$\alpha$ can only pass an integer multiple of $2\pi$ in the
first part.  Since the first part is non-random, even the
time $T-1$ (and not just $T$) is a stopping time adapted to
our filtration. The overshoot can be estimated in two
steps. By \eqref{e_zizi}, and the fact that $k>c_0$ we have
\begin{equation}\label{e_tatu}
\ev \left[(\alpha_{T}^*-a^\diamondsuit) \one{(T\le \ell_2)}
\right] \le  c n_0^{\nicefrac{-1}{2}}.
\end{equation}
By the expected increment bound \eqref{e_ea1} and the
strong Markov property applied at $T-1$ we have
\begin{equation}\label{e_tatu2}
\ev \left[(\alpha_{T}-\alpha^*_{T}) \one{(T\le \ell_2)}
\right] \le  c n_0^{\nicefrac{-1}{2}}.
\end{equation}
This gives
\begin{eqnarray}
\ev (\alpha_{\ell_2}-a^\diamondsuit)^{+}
  \nonumber
\nonumber
 &=&
\ev \left[ \one{(T\le \ell_2)}\ev\left[\alpha_{\ell_2}-a^\diamondsuit\big | \mathcal F_T  \right]\right]\\
 \nonumber
 &\le&
 c_1 (\ev \left[(\alpha_{T}-a^\diamondsuit) \one{(T\le \ell_2)} \right]\ + \sqrt{\eps}+ y)\\
&\le&c_1' (\sqrt{\eps}+y), \label{e_tightness2}
\end{eqnarray}
where the first inequality uses (\ref{e_tightness1}) and the
strong Markov property, and the second uses (\ref{e_tatu},
\ref{e_tatu2}). To prove (\ref{e_tightness3}) first note that
Lemma \ref{l_tightestim1}  also gives
\[
|\ev \alpha_{\ell}-a^\diamondsuit | \le (a^\diamondsuit-a)+c
(y+\sqrt{\eps})+\ev \ha_{\ell-1}/2+\sum_{j=\ell_1}^{\ell-2} b_j
\ev \ha_j.
\]
Then by (\ref{e_tightness2}) and the identity $|a|=-a+2
a^+$ we get
\begin{eqnarray*}
\ev |\alpha_{\ell}-a^\diamondsuit |&\le& |\ev \alpha_{\ell}-a^\diamondsuit| +2 \ev (\alpha_{\ell}-a^\diamondsuit)^+\\
&\le& (a^\diamondsuit-a)+c (y+\sqrt{\eps})+\ev
\ha_{\ell-1}/2+\sum_{j=\ell_1}^{\ell-2} b_j \ev \ha_j.
\end{eqnarray*}
Since $\ha_{\ell}\le |\alpha_\ell-a^\diamondsuit|$, the inequality
\eqref{Gprep} follows with $x_\ell=\ev
|\alpha_\ell-a^\diamondsuit|$, and the Gronwall-type estimate in
Lemma \ref{l_turbogw} implies (\ref{e_tightness3}).
\end{proof}

\subsection{Bounds for oscillations in the middle stretch}\label{a_tight}

This section presents the proof of Lemma
\ref{l_tightestim1}, isolated as the most technical
ingredient of the proof in the previous section. We start
with a bound on the mixed differences.

\begin{lemma} \label{l_diszno}
There exists an absolute constant $c$ so that for $\ell\le n-k^*$
(with $k^*$ as in Proposition \ref{p_sstepa}) we have
 $$| \ev_\ell [\Delta e^{i \varphi_{\ell,\gl}}-\Delta e^{i \varphi_{\ell,0}}]|\le
ck_{\phantom 0}^{-1}
 \ha_{\ell}+c n_0^{\nicefrac{-1}{2}} k_{\phantom 0}^{\nicefrac{-1}{2}}$$
and the same inequality holds with $e^{2i\varphi}$ replacing $e^{i\varphi}$  on the
left-hand side.
\end{lemma}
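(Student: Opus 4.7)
The plan is to Taylor expand the exponential increment and then apply the single-step asymptotics of Proposition \ref{p_sstepfi}. Writing $z = e^{i\varphi_{\ell,\gl}}$ and $w = e^{i\varphi_{\ell,0}}$, both of which are $\mathcal{F}_\ell$-measurable, the second-order expansion gives
\[
\ev_\ell[\Delta e^{i\varphi_{\ell,\gl}}] = iz\,\ev_\ell[\Dfi_{\ell,\gl}] - \tfrac{z}{2}\,\ev_\ell[\Dfi_{\ell,\gl}^2] + \ev_\ell[zR_\gl],\qquad |R_\gl|\le c|\Dfi_{\ell,\gl}|^3,
\]
and similarly for $\gl = 0$. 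Subtracting and substituting the asymptotic formulas for the first and second conditional moments produces a finite collection of terms that I group by type.

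The non-oscillatory drift and quadratic coefficients are nearly identical for $\gl$ and $0$: we have $b_\gl - b_0 = \gl/(2\hat s)$ and the leading coefficient $a(t,x,x) = (5+\Re\rho^2)/(\beta\hat s^2)$ of the quadratic term is independent of both $\gl$ and $x$. The resulting non-oscillatory contribution splits as $i(z-w)b_0/n_0 + iz\gl/(2n_0\hat s) - (z-w)a(t,x,x)/(2n_0)$, which is bounded by $\O(\ha_\ell/k) + \O((n_0 k)^{-1/2})$ using $|b_0|, |a(t,x,x)| = \O(n_0/k)$ and $1/\hat s = \sqrt{n_0/k}$. For the oscillatory pieces $\mathrm{osc}_1$ and $\mathrm{osc}_2$ I split
\[
z\,\mathrm{osc}(\gl,x) - w\,\mathrm{osc}(0,y) = (z-w)\,\mathrm{osc}(\gl,x) + w\bigl(\mathrm{osc}(\gl,x)-\mathrm{osc}(0,y)\bigr).
\]
The first summand contributes $\O(\ha_\ell/k)$ because $|\mathrm{osc}| = \O(n_0/k)$. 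For the second I use the explicit formulas for $\mathrm{osc}_{1,2}$ together with the estimates $|v_\gl - v_0| = |\gl|/(2\hat s)$, $|q|,|p|,|v_0| = \O(n_0/k)$, $|e^{-ix}-e^{-iy}| \le \ha_\ell$, and $|e^{-2ix}-e^{-2iy}|\le 2\ha_\ell$; this gives $|\mathrm{osc}(\gl,x)-\mathrm{osc}(0,y)| = \O(\sqrt{n_0/k} + (n_0/k)\ha_\ell)$, which after division by $n_0$ yields the claimed bound.

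What remains is the control of the cubic Taylor remainder and of the $\O(k^{-3/2})$ errors in Proposition \ref{p_sstepfi}. For the remainder I write $zR_\gl - wR_0 = (z-w)R_0 + z(R_\gl - R_0)$; the first piece is $\ha_\ell\cdot \O(k^{-3/2}) = \O(\ha_\ell/k)$ once $k\ge k^*$. The analogous $\O(k^{-3/2})$ errors from Proposition \ref{p_sstepfi} are handled the same way, provided one notices that their $\gl$- and $\varphi$-independent parts cancel in the subtraction, so that each surviving term carries a factor of either $\ha_\ell$ or $|v_\gl-v_0|\le |\gl|/(2\hat s)$. The case of $e^{2i\varphi}$ is handled identically after the expansion $e^{2i(\varphi+\delta)}-e^{2i\varphi}=e^{2i\varphi}(2i\delta-2\delta^2)+\O(\delta^3)$; the coefficients change by constants but the structural cancellations are unchanged. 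The main obstacle is this final step: the naive $\O(k^{-3/2})$ bound does not fit within the claimed error for small $k$, so one cannot treat the cubic remainders independently but must instead exploit the difference structure to produce a $(z-w)$ or $(v_\gl-v_0)$ factor in every surviving term.
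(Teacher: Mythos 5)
Your plan is reasonable in outline and correctly identifies its own weak point, but that weak point is in fact a genuine gap, not a technicality. After writing $zR_\gl - wR_0 = (z-w)R_0 + z(R_\gl - R_0)$ you only bound the first piece. For the second piece $z(R_\gl - R_0)$ you appeal to a cancellation --- ``the $\gl$- and $\varphi$-independent parts cancel" --- that is not available from Proposition~\ref{p_sstepfi} as stated: its $\O(k^{-3/2})$ errors are anonymous quantities, and the difference of two such errors is again $\O(k^{-3/2})$, which is \emph{not} dominated by $\O(\ha_\ell/k + n_0^{-1/2}k^{-1/2})$ (e.g.\ take $k$ of order $1$ and $\ha_\ell$ small). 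Making your cancellation rigorous would require tracing the $\ell$-, $\gl$-, and $x$-dependence of those error terms through the proof of Proposition~\ref{p_sstepfi}, which amounts to re-deriving a single-step estimate for the difference $\Dal_{\ell,\gl}=\Dfi_{\ell,\gl}-\Dfi_{\ell,0}$ --- i.e.\ the content of Proposition~\ref{p_sstepa}.

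The paper avoids this entirely by an algebraic rather than analytic decomposition: it writes $\Delta e^{i\varphi_{\ell,\gl}}-\Delta e^{i\varphi_{\ell,0}}$ exactly as
\[
e^{i\varphi_{\ell,0}}\Bigl[(e^{i\alpha_{\ell,\gl}}-1)(e^{i\Dfi_{\ell,\gl}}-1)+(e^{i\Dal_{\ell,\gl}}-1)(e^{i\Dfi_{\ell,0}}-1)+(e^{i\Dal_{\ell,\gl}}-1)\Bigr],
\]
takes $\ev_\ell$, and then bounds each summand with $|\ev[e^{iX}-1]|\le \ev X^2+|\ev X|$ together with the already-proved moment bounds: $\ev_\ell\Dfi_{\ell,\gl}$ and $\ev_\ell\Dfi_{\ell,\gl}^2$ from Proposition~\ref{p_sstepfi} (for the first term, which carries the factor $|e^{i\alpha_{\ell,\gl}}-1|\le\ha_\ell$), and $\ev_\ell\Dal_{\ell,\gl}$, $\ev_\ell(\Dal_{\ell,\gl})^2$, $\ev_\ell|\Dfi_{\ell,\gl}\Dal_{\ell,\gl}|$ from Proposition~\ref{p_sstepa} (for the other two). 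The $\ha_\ell$ and $n_0^{-1/2}$ gains come for free because Proposition~\ref{p_sstepa} already encodes them. If you wish to keep your Taylor-expansion route, you must invoke Proposition~\ref{p_sstepa} (not~\ref{p_sstepfi}) for the cross and $\Dal$ terms; as it stands, the proof is incomplete.
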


\begin{proof} The left-hand side equals
\begin{eqnarray*}
&&\left| e^{i \varphi_{\ell,0}}\,\ev_\ell \left[(e^{i \alpha_{\ell,\gl}}-1)(e^{i \Delta \varphi_{\ell,\gl}}-1)+ (e^{i \Delta \alpha_{\ell,\gl}}-1)(e^{i \Delta \varphi_{ \ell,0}}-1)+(e^{i \Delta \alpha_{\ell,\gl}}-1)\right]\right|\\
&&\hskip15mm\le |e^{i \alpha_{\ell,\gl}}-1| |\ev_\ell [ e^{i
\Delta \varphi_{\ell,\gl}}-1]|+\ev_\ell |\Delta \alpha_{\ell,\gl}
\Delta \varphi_{ \ell,0}|+|\ev_\ell [e^{i \Delta
\alpha_{\ell,\gl}}-1]|.
\end{eqnarray*}
The statement now follows from (\ref{e_onestepfi}), Proposition
\ref{p_sstepa}, the bounds (\ref{e_triv1}, \ref{e_triv2}) and the bound
$$
|\ev [e^{i X}-1]|\le |\ev [e^{i X}-iX-1]|+|\ev( iX)|\le \ev |e^{i
X}-iX-1|+|\ev X|\le\ev X^2+|\ev X|.
$$
 The inequality
involving $e^{2i\varphi}$ can be proved the same way.
\end{proof}

Now we are ready to prove Lemma \ref{l_tightestim1}.

\begin{proof}[Proof of Lemma \ref{l_tightestim1}]  We will
drop $\gl$ in $\alpha_{\ell,\gl}$, and condition on the
$\sigma$-field $\mathcal F_{\ell_1}$. Let $\ev$ denote the conditional
expectation with respect to this $\sigma$-field and let $x_\ell=\ev \hat \alpha_\ell$.
We have
\begin{eqnarray}\label{alfadiff}
\left| \ev
\left[\alpha_{\ell_2}-\alpha_{\ell_1}\right]\right|&\le&
\Big| \sum_{\ell=\ell_1}^{\ell_2-1} \ev\left[ \ev \left(
\,\Delta \alpha_\ell \big| \mathcal F_\ell \right)\right]\Big|.
\end{eqnarray}
Let
\begin{eqnarray*}
g_{1,\ell}=\frac1{n_0}(-v_\gl-iq/2)\ev (
e^{-i \varphi_{\ell,\gl}}-e^{-i \varphi_{\ell,0}}), \quad g_{2,\ell}=\frac1{n_0}\,\frac{iq}{4}\ev(
e^{-2i \varphi_{\ell,\gl}}-e^{-2i \varphi_{\ell,0}}).
\end{eqnarray*}
By the single-step asymptotics  (\ref{e_ea0})
the right-hand side of \eqref{alfadiff} can be bounded
by
\begin{eqnarray}\nonumber
\big|\hspace{-.18em} \sum_{\ell=\ell_1}^{\ell_2-1}  \Re
(g_{1,\ell}\, \eta_{\ell})\big|+ \big|\hspace{-.18em}
\sum_{\ell=\ell_1}^{\ell^*} \Re (g_{2,\ell}\,
\eta_{\ell}^2)\big|+\big|\hspace{-.28em}
\sum_{\ell=\ell^*+1}^{\ell_2-1} \Re (g_{2,\ell}\,
\eta_{\ell}^2)\big| + c \sum_{\ell=\ell_1}^{\ell_2-1} k_{\phantom
0}^{\nicefrac{-3}{2}}x_\ell +c \sum_{\ell=\ell_1}^{\ell_2-1}
n_0^{\nicefrac{-1}{2}} k_{\phantom
0}^{\nicefrac{-1}{2}},\nonumber
 \end{eqnarray}
with the usual notation $k=n_0-\ell$.
We call the terms $\zeta_1$, $\zeta_2$, $\zeta_3$,
$\zeta_4$, $\zeta_5$.
Note that $\zeta_2$, $\zeta_3$ come from a single sum cut
in two parts at $\ell^*=n_0-\lfloor  \nnn \rfloor$, and one
part may be empty. Clearly, we have $\zeta_5\le c
\sqrt{\eps}$, and $\zeta_4$ is already in the desired form.
By (\ref{e_triv1}) and the bounds (\ref{e_vbecs},
\ref{e_pqdef}) on $q,v$ we have
\begin{eqnarray*}
|g_{1,\ell}|&\le& \frac{c}{n_0}|v_\gl+iq/2| x_\ell \le
\frac{c}{k}\,x_\ell.
\end{eqnarray*}
Lemma \ref{l_diszno} with $t_+=(\ell+1)/n_0$ gives
\begin{eqnarray*}
|\Delta g_{1,\ell}|&\le&
\frac{c}{n_0}\Big(\,|v_\gl(t_+)+\frac{iq(t_+)}{2}| \, |\ev
[\Delta e^{i \varphi_{\ell,\gl}}-\Delta e^{i \varphi_{\ell,0}} ]|+
\left(|\Delta_\ell v_\gl|+|\Delta_\ell q|\right) \ev|
e^{-i \varphi_{\ell,\gl}}-e^{-i \varphi_{\ell,0}}
|\Big)
\\
&\le& c k_{\phantom 0}^{-2}x_\ell +c n_0^{\nicefrac{-1}{2}}
k_{\phantom 0}^{\nicefrac{-3}{2}},
\end{eqnarray*}
where we used the notation $\Delta_\ell
f=f((\ell+1)/n_0)-f(\ell/n_0).$ The oscillatory sum Lemma
\ref{l_blackboxnew} gives
\begin{eqnarray*}
\zeta_1 &\le& c( \nn
(n_0-\ell_2)^{\nicefrac{-3}{2}}+(n_0-\ell_2)^{-1})
x_{\ell_2-1}+ c \sum_{\ell=\ell_1}^{\ell_2-2} (x_\ell
k_{\phantom 0}^{-2} +n_0^{\nicefrac{-1}{2}} k_{\phantom
0}^{\nicefrac{-3}{2}})( \nn k_{\phantom
0}^{\nicefrac{-1}{2}}+1)
\\
&\le& \frac{x_{\ell_2-1}}{6}+c ( \nn^{\nicefrac{1}{3}}\vee1)
n_0^{\nicefrac{-1}{2}} + c \sum_{\ell=\ell_1}^{\ell_2-2}
x_\ell (k_{\phantom 0}^{-2}+  \nn
k_{\phantom 0}^{\nicefrac{-5}{2}}),
\end{eqnarray*}
where the coefficient $\nicefrac16$ is achieved by choosing a
large enough $c_0$. We continue
\begin{eqnarray*}
\zeta_2\;\le   \;\sum_{\ell=\ell_1}^{\ell^*} |
g_{2,\ell}|\;\le\; \frac{c}{n_0} \,
|q|\sum_{\ell=\ell_1}^{\ell^*} x_\ell
 \;\le\; c'  \sum_{\ell=\ell_1}^{\ell^*} x_\ell\,  \nn
 k_{\phantom 0}^{\nicefrac{-3}{2}}.
\end{eqnarray*}
The term $\zeta_3$ is handled by Lemma \ref{l_blackboxnew}
with $g_j=g_{2,j}$. Standard bounds on  $q, q'$ and Lemma
\ref{l_diszno} give
\begin{eqnarray*}
|g_\ell|\le c k_{\phantom 0}^{-1}x_\ell,\quad
|g_\ell-g_{\ell+1}|\le c k_{\phantom 0}^{-2}x_\ell +c
n_0^{\nicefrac{-1}{2}} k_{\phantom 0}^{\nicefrac{-3}{2}},
\end{eqnarray*}
hence from Lemma \ref{l_blackboxnew} we get
\begin{eqnarray*}
\zeta_3&\le& c( \nn
(n_0-\ell_1)^{\nicefrac{-1}{2}}+1)k_{\phantom 0}^{-1}
x_{\ell_2-1} + c \sum_{\ell=\ell^*+1}^{\ell_2-2} (x_\ell
k_{\phantom 0}^{-2} +n_0^{\nicefrac{-1}{2}} k_{\phantom
0}^{\nicefrac{-3}{2}})( \nn k_{\phantom
0}^{\nicefrac{-1}{2}}+1)
\\
&\le& \frac{x_{\ell_2-1}}{6}+c ( \nn^{\nicefrac{1}{3}}\vee1)
n_0^{\nicefrac{-1}{2}} + c \sum_{\ell=\ell^*+1}^{\ell_2-2}
x_\ell (k_{\phantom 0}^{-2}+  \nn
k_{\phantom 0}^{\nicefrac{-5}{2}})
\end{eqnarray*}
if $c_0$ is chosen sufficiently large. The claim follows.
\end{proof}

\subsection{Why does the right boundary condition disappear?}
\label{s_unif}

The goal of this section is to show that the phase evolution picks
up sufficient randomness that will neutralize the right boundary
condition of the discrete equations.

\begin{proposition}\label{p_unif}
Let $m=\lfloor n- \nnn- \kappa (\nn^{\nicefrac{2}{3}}\vee 1)\rfloor$
and suppose that $\kappa\to \infty$ with $n_0^{-1} \kappa
(\nn^{\nicefrac{2}{3}}\vee 1)\to 0$.  Then $\varphi_{m,0}$ modulo
$2\pi$ converges in distribution to Uniform$(0,2\pi)$.
\end{proposition}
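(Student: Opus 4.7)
The plan is a characteristic-function argument: I will show that for every nonzero integer $j$,
$$\psi_j(m) := E[e^{ij\varphi_{m,0}}] \to 0 \quad \text{as } n\to\infty,$$
which by Fourier inversion is equivalent to $\frtp{\varphi_{m,0}} \cd \text{Uniform}[0,2\pi]$.

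Since $\ell\mapsto\varphi_{\ell,0}$ is Markov, $\psi_j(\ell+1) = E[e^{ij\varphi_{\ell,0}} G_\ell(\varphi_{\ell,0})]$ with $G_\ell(x) := E[e^{ij\Delta\varphi_{\ell,0}}\mid\varphi_{\ell,0}=x]$. I would apply the single-step asymptotics of Proposition \ref{p_sstepfi} specialized to $\lambda=\lambda'=0$ (the key algebraic observation: with $x=y$ the quadratic coefficient reduces to $a(t,x,x)=(5+\Re\rho^2)/(\beta\hat s^2)$, which is \emph{independent of }$x$) and expand $e^{ijy}=1+ijy-\tfrac12 j^2y^2+O(j^3y^3)$; the third-moment bound gives Taylor error $O(k^{-3/2})$ with $k:=n_0-\ell$. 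This produces
$$G_\ell(x) = G_\ell^{(0)} + \sum_{r\in\{\pm1,\pm2\}} c_\ell^{(r)} e^{irx} + O(k^{-3/2}),$$
where $G_\ell^{(0)} = 1 + ij b_0(\ell/n_0)/n_0 - j^2 a(\ell/n_0,x,x)/(2n_0)$ is a deterministic complex number, and each coefficient $c_\ell^{(r)}=O(1/k)$ carries one of the unit-modulus oscillatory factors $\eta_\ell^{\pm1}$ or $\eta_\ell^{\pm2}$ from \eqref{d_eta}. Substituting yields the coupled Fourier-space recursion
$$\psi_j(\ell+1) = G_\ell^{(0)}\,\psi_j(\ell) + \sum_{r\in\{\pm1,\pm2\}} c_\ell^{(r)}\,\psi_{j+r}(\ell) + O(k^{-3/2}).$$

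The principal decay is driven by $G_\ell^{(0)}$: since $a(t,x,x)\geq 4/(\beta\hat s^2)$ one checks $|G_\ell^{(0)}|\leq 1-2j^2/(\beta k)+O(k^{-2})$, so
$$\Big|\prod_{\ell=0}^{m-1} G_\ell^{(0)}\Big| \leq \exp\Big(-\tfrac{2j^2}{\beta}\sum_{\ell=0}^{m-1}\tfrac{1}{n_0-\ell}+O(1)\Big) \leq \Big(\tfrac{k_m}{n_0}\Big)^{\!2j^2/\beta+o(1)},$$
where $k_m=n_0-m$. By hypothesis $k_m/n_0\to 0$, so this principal product tends to $0$ for every $j\neq 0$. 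The cumulative Taylor errors sum as $\sum_\ell O(k^{-3/2}) = O(k_m^{-1/2})\to 0$, using $\kappa\to\infty$.

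The main obstacle is controlling the oscillatory correction. Unwinding the recursion expresses $\psi_j(m)$ as a sum over ``Fourier paths'' $j=j_0\to j_1\to\cdots\to j_r$ with $|j_i-j_{i-1}|\in\{1,2\}$, at times $\ell_1<\cdots<\ell_r$, each transition contributing $c_{\ell_i}^{(r_i)}=O(1/k_{\ell_i})$ multiplied by $\eta_{\ell_i}^{\pm 1,\pm 2}$. A pure modulus bound integrates $\int k^{-1}\cdot(k_m/k)^{2j^2/\beta}\,dk$ to $O(1)$ per path, so absolute values do not suffice; the oscillation of the $\eta_{\ell_i}$ factors must be exploited. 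Invoking the oscillatory sum Lemma \ref{l_blackboxnew} (which turns a sum $\sum_\ell f_\ell\,\eta_\ell^{\pm r}$ into an endpoint term times $\mu_n/\sqrt{k}+1$ plus a variation term in $f_\ell$) and arguing by induction on the path length $r$, the total contribution of paths with $r\geq 1$ is bounded by $o(1)$ plus a bounded multiple of the principal product of the previous paragraph, and so also tends to $0$. The most delicate case is paths ending at $j_r=0$, where $\psi_0\equiv 1$ does not decay: here genuine oscillatory cancellation against $\eta_{\ell_i}^{\pm r_i}$, rather than the bulk factor, is the mechanism killing the term. Combining the three paragraphs proves $\psi_j(m)\to 0$ for every $j\neq 0$ and hence the proposition.
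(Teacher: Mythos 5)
Your Fourier/characteristic-function plan is a genuinely different route from the paper's, but as written it has a real gap at exactly the place you flag: the ``Fourier paths'' that hit $j_i=0$. Once a path reaches index $0$ the principal factor $G_\ell^{(0)}|_{j=0}=1$ provides no decay at all, so the entire burden of killing those contributions is on oscillatory cancellation of the $\eta_\ell^{\pm 1},\eta_\ell^{\pm 2}$ factors against the $O(1/k)$ transition coefficients. You assert this works ``by induction on the path length $r$'' and by ``invoking the oscillatory sum Lemma \ref{l_blackboxnew},'' but you do not carry it out, and it is not routine: the number of paths of length $r$ grows exponentially in $r$; each transition coefficient $c_\ell^{(\pm 2)}$ carries a factor of $q(t)$ whose size relative to $k^{-1}$ depends nontrivially on whether $\mu_n$ is large or small; and Lemma \ref{l_blackboxnew} gives different bounds for $\eta$ versus $\eta^2$ sums (the $\eta^2$ bound degrades by a factor $\mu_n^{-1}k_0^{1/2}$ that must be traded off against the smallness of $q$ when $\mu_n$ is small). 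Until those estimates are made uniform in $\mu_n$ and summed over all paths, the claim $\psi_j(m)\to 0$ is not established. The paper sidesteps all of this bookkeeping with a softer argument: it applies the already-proved SDE limit (Theorem \ref{thm_sdelim}) to the process restarted at time $m-\tau\xi$ with rescaled parameters $\breve n, \breve n_0, \breve\varphi$, concludes that $\varphi_{m,0}-\varphi_{m-\tau\xi,0}$ is asymptotically Gaussian with variance at least $(2/\beta)\log(\tau+1)$ independently of the starting point, and then lets $\tau\to\infty$. If you want to rescue your approach, the honest way is to make the induction on $r$ precise, show the per-path bound is summable, and handle the $\eta^2$ sums uniformly in $\mu_n$ --- a substantial amount of work the current sketch only gestures at.
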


\begin{proof} We will show that given $\eps>0$, every subsequence of indices has a
 further subsequence along which $\varphi_{m,0}$ modulo $2\pi$ is
eventually $\eps$-close to uniform distribution. So we first pick
an integer $\tau=\tau(\eps)$ and show that along a suitable
subsequence, the conditional distribution given $\mathcal
F_{m-\tau\xi}$ of $\varphi_{m,0}-\varphi_{m-\tau \xi,0}$ converges
to Gaussian with variance tending to $\infty$ with $\tau$. Here
the scaling factor is $\xi=\lfloor \kappa
(\nn^{\nicefrac{2}{3}}\vee 1)\rfloor$. Since a constant plus a
Gaussian with large variance is close to uniform modulo $2\pi$,
the claim follows if we let $\tau$ go to $\infty$.

To show the distributional convergence, we apply the SDE limit
Theorem \ref{thm_sdelim} to the evolution of $\varphi$ from time
$m-\tau\xi$ on. To adapt to the setup of the theorem we introduce
the new parameters $$\breve n=n-m+\tau \xi, \qquad
 \mu_{\breve n}= \nn,\qquad \breve n_0=\breve n- \mu^2_{\breve n}/4-1/2,\qquad
\breve \varphi_{\ell,\gl}=\varphi_{\ell+m-\tau \xi,\gl} .$$

By assumption, we have $\breve n_0 \mu_{\breve
 n}^{\nicefrac{-2}{3}}\to \infty$. We pass to a subsequence
so that  $\breve n_0/\breve n$ has a limit
$1/(1+\nu)\in[0,\infty]$, so Theorem \ref{thm_sdelim} (trivially
modified to allow general initial conditions) applies. The result
is that $\breve\varphi_{\lfloor t \breve n_0 \rfloor, 0 }$ has an
SDE limit given by \eqref{sdelim2} with $\lambda=0$. Thus
$\varphi_{m,0}-\varphi_{m-\tau \xi,0}$ converges to a normal
random variable which does not depend on the initial value $\breve
\varphi(0)$. Its variance is given by integrating the  sum of the
squares of the  independent diffusion coefficients on
the corresponding scaled time interval:
\begin{eqnarray*}
\int_0^{1-(1+\tau)^{-1}}  \frac{6\nu+2-2t}{\beta (1-t)(\nu+1-t)}
dt\ge\frac2\beta \log (\tau+1),
\end{eqnarray*}
which goes to $\infty$ with $\tau$, as required.
\end{proof}

\subsection{The uneventful ending}\label{s_end}

This section is about the last part of the recursion, from
$$m_2=\lfloor n- \nnn- \kappa (\nn^{\nicefrac{2}{3}}\vee 1)
\rfloor$$  to $n$ where $\kappa>0$ is a constant. The goal is to
show that nothing interesting happens on this stretch. More
precisely, we show
\begin{lemma}\label{l_short}
For every  $\gl\in \RR$ and $\kappa>0$ as  $n\to \infty$ we have
$\tph_{m_2,\gl}-\tph_{m_2,0}\to 0$ in probability.
\end{lemma}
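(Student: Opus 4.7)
The plan is to identify the bottom-right block of $M$ as an independent smaller $\beta$-ensemble and then invoke Theorem~\ref{t_rrv} on it. First I would observe that the backward recursion defining $\tphh_{m_2,\Lambda}$ depends only on the entries $X_\ell, Y_\ell$ with $\ell \geq m_2$, which are exactly the random inputs of the bottom-right $(n-m_2)\times(n-m_2)$ principal submatrix $M^{(N)}$ of \eqref{e_matrix}, where $N := n-m_2$. By the Dumitriu--Edelman construction $M^{(N)}$ has the law of the tridiagonal $\beta$-ensemble of size $N$, and Proposition~\ref{p_wild}(iv) applied to it identifies the eigenvalues of $M^{(N)}$ precisely with the $\Lambda$ satisfying $\tphh_{m_2,\Lambda}\equiv \pi \pmod{2\pi}$, while between consecutive eigenvalues $\tphh_{m_2,\Lambda}$ is analytic and decreases by exactly $2\pi$.

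Next I would apply Theorem~\ref{t_rrv} to $M^{(N)}$. For fixed $\kappa>0$ we have $N = \mu_n^2/4 + \kappa(\mu_n^{2/3}\vee 1) + O(1)$, giving $N^{1/6}(2\sqrt{N} - \mu_n)\to a^\ast$ for some positive constant $a^\ast = a^\ast(\kappa)$, and Theorem~\ref{t_rrv} then yields $N^{1/6}(\Lambda^{(N)} - \mu_n)\Rightarrow \Airyb + a^\ast$. In this scaling the window $[\mu_n, \mu_n + \gl/(2\sqrt{n_0})]$ has width of order $\mu_n^{1/3}\gl/\sqrt{n_0}$, which tends to $0$ by the assumption $n_0^{-1}\mu_n^{2/3}\to 0$. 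Since $\Airyb + a^\ast$ is almost surely simple with no atom at $0$, the number of eigenvalues of $M^{(N)}$ in the window tends to $0$ in probability, so the integer part of $(\tphh_{m_2,\gl} - \tphh_{m_2,0})/(2\pi)$ vanishes in probability.

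The main obstacle is the continuous variation of $\tphh_{m_2,\Lambda}$ across the (eigenvalue-free) window, which I would bound by the window width times $\sup_\Lambda |\partial_\Lambda \tphh_{m_2,\Lambda}|$. A discrete Pr\"ufer-type identity expresses this derivative at $\Lambda = \mu_n$ essentially as $2\pi$ divided by the length of the local eigenvalue gap of $M^{(N)}$ around $\mu_n$; by the Airy convergence and the almost-sure simplicity of $\Airyb + a^\ast$, this gap is of order $N^{-1/6} \asymp \mu_n^{-1/3}$ in probability, so the derivative is of order $\mu_n^{1/3}$ in probability, and multiplying by the window width gives continuous variation of order $\mu_n^{1/3}/\sqrt{n_0}\to 0$. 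Finally, the passage from the wild target $\tphh_{m_2}$ to $\tph_{m_2} = \tphh_{m_2}\lstar \varupsl{T}_{m_2}\varupsl{Q}_{m_2-1}$ goes through a deterministic element of $\UPSL$ whose $\lstar$-action has a Poisson-kernel derivative bounded in probability, since $\tphh_{m_2,0}$ modulo $2\pi$ should be asymptotically uniform (a backward analog of Proposition~\ref{p_unif}) and thus typically avoids the poles of the kernel; the bound then transfers from $\tphh_{m_2,\gl}-\tphh_{m_2,0}$ to $\tph_{m_2,\gl}-\tph_{m_2,0}$. The primary technical point is to make the Pr\"ufer-derivative bound fully rigorous; a direct backward adaptation of the middle-stretch arguments of Section~\ref{s_middle} would be an alternative route that avoids routing through Theorem~\ref{t_rrv}.
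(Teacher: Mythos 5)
Your high-level strategy---peel off the bottom-right $(n-m_2)\times(n-m_2)$ block, recognize its eigenvalues as an independent $\beta$-ensemble, and invoke the edge limit of Theorem~\ref{t_rrv} to show the scaling window $[\mu_n,\mu_n+\gl/(2\sqrt{n_0})]$ is almost surely eigenvalue-free---is indeed the same starting point as the paper's proof in the $\mu_n\to\infty$ regime, and your arithmetic for $N^{1/6}(2\sqrt{N}-\mu_n)\to\kappa$ and for the window shrinking to a point in Airy coordinates is correct. However, there are three genuine gaps in the way you complete the argument.

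First, you do not treat the case where $\mu_n$ stays bounded. Then $N = n-m_2 \approx \mu_n^2/4 + \kappa(\mu_n^{2/3}\vee 1)$ remains bounded, so there is no Airy limit at all; the paper handles this with a separate elementary argument (a fixed finite number of transformations $\varupsl{R}_{n-j}^{-1}$ converging to nondegenerate limits whose $\Lambda$-dependence vanishes).

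Second, and more seriously, your ``Pr\"ufer-derivative'' bound is not justified. Between consecutive eigenvalues of $M^{(N)}$ the combination $\hat\varphi_{m_2,\Lambda}-\tphh_{m_2,\Lambda}$ increases by exactly $2\pi$, but this gives no pointwise control of $\partial_\Lambda\tphh_{m_2,\Lambda}$: nothing forces that derivative to be of order $2\pi/(\text{local gap})$ at the particular point $\Lambda=\mu_n$, and in Sturm oscillation theory it is proportional to an inverse power of $|u_{m_2,\Lambda}|$, which can be small with non-negligible probability. The integral of $|\partial_\Lambda\tphh|$ over a gap is $2\pi$, but its pointwise value at the left edge of the gap can be much larger or much smaller. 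So bounding the continuous variation of $\tphh$ by ``window width $\times$ sup of the derivative'' is not available as stated. Third, the passage from $\tphh_{m_2}$ to $\tph_{m_2}=\tphh_{m_2}\lstar\varupsl{T}_{m_2}\varupsl{Q}_{m_2-1}$ is not harmless: when $\mu_n\to\infty$ one has $\Im\rho_{m_2}\to0$, so $\varupsl{T}_{m_2}$ degenerates and its circle-derivative blows up near $-1\in\partial\UU$. Ensuring that $\tphh_{m_2,0}$ avoids that region is not a routine step (and your appeal to a ``backward analog of Proposition~\ref{p_unif}'' is circular, since you would need uniformity of the \emph{target} phase, not the forward one).

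The paper sidesteps all three difficulties in one move. Rather than splitting ``integer part'' from ``continuous variation,'' it shows that for each fixed direction $\theta$ on the circle, the probability that the (lifted) arc $[\tph_{m_2,\gl},\tph_{m_2,0}]$ passes through $\theta$ tends to zero; an $\eps$-net of directions then bounds the arc length. The event for a single $\theta$ is precisely that the bottom-block recursion started from the \emph{generalized} initial condition $\hat\varphi_{m_2,\nu}=\theta\lstar\varupsl{T}_{m_2}^{-1}$ has no eigenvalue in the window, and this is handled by the generalized-initial-condition version of the Airy edge limit (RRV, Remark~3.8), giving a limiting point process $\Xi_\zeta$ that is a.s.\ simple with no fixed atoms. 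This yields the arc bound directly on $\tph$ in $\RR'$ (since $\varupsl{Q}_{m_2-1}$ is an isometry), with no need for a Pr\"ufer derivative bound and no separate $\tphh\to\tph$ transfer. If you want to repair your argument without switching to the paper's route, the key missing ingredient is precisely that generalized-initial-condition Airy limit; the ``$2\pi$ per gap'' derivative heuristic does not suffice.
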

Fix $\kappa$ and $\gl$. We will show the convergence by showing
that any subsequence has a further subsequence with the
desired limit. Because of this, we may assume that the limit of
$ \nn$ exists. We will consider two cases: $\lim  \nn<\infty$
and $\lim  \nn=\infty.$

\begin{proof}[Proof of Lemma \ref{l_short} in the case when $\lim  \nn$ is
finite.] \ \\ In this case we can assume that $n-m_2$ is eventually equal to some
integer $\xi$. Also, $\rho_{m_2}$ converges to a unit complex
number $\rho$ with $\Im \rho>0$. By (\ref{e_1stepfi})
 we have
\begin{equation}\label{e_prod}
\tph_{n-\xi,\gl}\lstar \varupsl{Q}_{n-\xi-1}^{-1}=0\lstar
\varupsl{R}^{-1}_{n-1}\cdots \varupsl{R}^{-1}_{n-\xi},
\end{equation}
where
$
\varupsl{R}_{n-j,\gl}^{-1}=\varupsl{W}_{n-j}^{-1}
 \tilde{\varupsl{L}}_{n-j,\gl}^{-1} \varupsl{Q}(\pi)^{-1}.
$
%
Consider the components of the product on the right-hand
side of (\ref{e_prod}). The elements $\tilde{\varupsl{L}}_{n-j,\gl}$ are
deterministic (see (\ref{e_evoevo})) and
as functions on $\RR'$ -- the lifted unit
circle -- they converge uniformly to non-degenerate limits
that do not depend on $\lambda$. (Here we also used  $s_{n-j}=\sqrt{j-\nicefrac12}$.)
In the same sense, we also
have
 $
 \varupsl{T}_{n-\xi}\to \varupsl{A}(\Im (\rho)^{-1},-\Re \rho)$.
 Because of the  moment bounds (\ref{e_mom}) we may find a subsequence along which $X_{n-1},  \dots, X_{n-\xi}$ and $Y_{n-1}, \dots, Y_{n-\xi}$ all converge. Then (using the definition (\ref{d_QW})) it follows that the random elements $\varupsl{W}_{n-j}$ converge as functions for $j=1,\ldots,\xi$.

Since all of these limits are non-degenerate and the dependence on
$\lambda$ disappears, we have
\begin{equation*}
\left|\tph_{n-\xi,\gl} -\tph_{n-\xi,0}
\right|=\left|\tph_{n-\xi,\gl}\lstar
\varupsl{Q}_{n-\xi-1}^{-1}-\tph_{n-\xi,0}\lstar \varupsl{Q}_{n-\xi-1}^{-1}\right|\to
0.\qedhere
\end{equation*}
\end{proof}

\begin{remark}\label{r_RRV}
For the second case, we review some of the results of \RRVlong,
henceforth denoted RRV, about the eigenvalues of the stochastic
Airy operator. The paper considers
 the   eigenvalue process $\Lambda_n$ of the random matrix $M$ (see
(\ref{e_originalmatrix})) under the edge scaling
$n^{\nicefrac{1}{6}}(\Lambda_n-2 \sqrt{n}).$ By Theorem 1.1 of
RRV, the limit is a point process $\Xi$ given by the eigenvalues
of the  {\bf stochastic Airy operator}, the random Schr\"odinger
operator
$$\mathcal H_\beta=-\frac{d^2}{dx^2}+x+\frac{2}{\sqrt{\beta}} b'_x$$ on the
positive half-line. Here $b'$ is white noise and the initial
condition for the eigenfunction $f$ is $f(0)=0, f'(0)=1$.  By RRV,
Proposition 3.5 and the discussion preceding RRV, Proposition
3.7,
\begin{equation}\label{simplepp}
  \mbox{$\Xi$ is a.s.\ simple, and for every $x\in \R,$ we have }
  \pr(x\in \Xi )=0.
\end{equation}
The proof is based on the observation that after appropriate
rescaling the matrix $M$ acts on vectors as a discrete
approximation of $\mathcal H_\beta$. The initial condition
$f(0)=0, f'(0)=1$ comes from the fact that the discrete eigenvalue
equation for an eigenvalue $\Lambda=2\sqrt{n}+n^{-1/6} \nu$ is
equivalent to a three-term recursion for the vector entries
$w_{\ell,\nu}$ (c.f. (\ref{e_vrec}) and Remark \ref{r_oldmatrix})
with the initial condition  $w_{0,\nu}=0$ and $w_{1,\nu}\neq 0$.

By RRV, Remark 3.8, the results of RRV extend to solutions of the
same three-term recursion with  more general initial conditions.
We say that a value of $\nu$ is an eigenvalue for a family of
recursions parameterized by $\nu$ if the corresponding recursion
reaches $0$ in its last step.
%
Suppose that for given $\zeta\in
[-\infty,\infty]$ the initial condition for the three-term recursion equation
satisfies
\begin{equation}\nonumber
\frac{w_{0,\nu}}{n^{\nicefrac{1}{3}}
(w_{1,\nu}-w_{0,\nu})}=n^{\nicefrac{-1}{3}} (p_n-1)^{-1} \cp \zeta,
\end{equation}
where $p_n:=w_{1,\nu}/w_{0,\nu}$  does not depend on $\nu$. Here
the factor $n^{\nicefrac{1}{3}}$ is the spatial scaling for the
problem (RRV, Section 5).   Then the eigenvalues of this family of recursions
converge to those of the stochastic Airy operator with initial
condition $f(0)/f'(0)=\zeta$. The corresponding point process
$\Xi_\zeta$ will also satisfy \eqref{simplepp}, see RRV, Remark
3.8.
\end{remark}

Now we are ready to complete the proof of Lemma \ref{l_short}.

\begin{proof}[Proof of Lemma \ref{l_short} in the case when $\lim
 \nn=\infty$.]\ \\
Without loss of generality we assume $\gl>0$.
Fix a  $\theta\in\R'$ and let $B$ denote the event that
\begin{equation}\label{arcs}
\textup{$x\lstar \varupsl{Q}_{m_2-1}\not\equiv \theta $ mod $2\pi$ for $x\in
[\tph_{m_2,\gl},\tph_{m_2,0}]$}.
\end{equation}
%
%
 It suffices to show that
$\pr(B)\to 1$.  Indeed, by considering a subdivision of the unit
circle into arcs of length at most $ \eps$ at points
$e^{i\theta_j}$,  if the event  \eqref{arcs} holds for each
$\theta_j$ then
 $$\left|\tph_{m_2,\gl}\lstar \varupsl{Q}_{m_2-1}^{-1}-\tph_{m_2,0}\lstar \varupsl{Q}_{m_2-1}^{-1}\right|=\left|\tph_{m_2,\gl}-\tph_{m_2,0}\right|$$
cannot be greater than $\eps$. Taking $\eps\to 0$
completes the proof.

Equation  (\ref{e_target}) translates $B$ to an event about
$\tphh_{\ell,\gl}$. More specifically, by Proposition \ref{p_wild}
$B$ is the event that the one-parameter family of recursions
parameterized by $\nu$
$$
\hat \varphi_{\ell+1,\nu}=\hat \varphi_{\ell,\nu}\lstar \varupsl{R}_{\ell,\nu}, \qquad \ell\ge m_2
$$
with initial condition
\begin{equation}\label{icond}
\hat \varphi_{m_2,\nu}=\theta \lstar \varupsl{T}_{m_2}^{-1}
\end{equation}
does not have an eigenvalue in the interval
 $[0, \gl]$.
 This recursion is determined by the bottom right $n_2\times n_2$
submatrix of $M(n)^{D(n)}$ \eqref{e_matrix}, where $n_2=n-m_2$.
Thus the recursion is in fact the discrete eigenvalue equation for
$M(n_2)^{D(n_2)}$ with a generalized initial condition. This can
be transformed back to the discrete eigenvalue equation for
$M(n_2)$ with the corresponding initial condition. Let
$u=\varupsl{U}^{-1}(e^{i \theta})\in \R$ be the point corresponding to
$\theta\in \RR'$. Then \eqref{icond} translates to the initial
condition
  \be r_{m_2,\nu}=u.\varupsl{T}_{m_2}^{-1}=\Im
(\rho_{m_2}) u+\Re(\rho_{m_2}),
\nonumber
 \ee for the
eigenvalue equation of $M^D$ (see \eqref{e_rrec}) and by
Remark \ref{r_oldmatrix} to the initial condition
\be p_{m_2,\nu}=r_{m_2,\nu}\,
\frac{\chi_{(n-m_2-1)\beta}}{\sqrt{\beta}\,
s_{m_2+1}}=\frac{\chi_{(n-m_2-1)\beta}}{\sqrt{\beta
(n-m_2-\nicefrac12)} }\, (\Im (\rho_{m_2})
u+\Re(\rho_{m_2})) \label{e_init1} \ee for the eigenvalue
equation of $M$. As $ \nn\to \infty$, we have
\begin{equation}\label{e_asym}
n_2\to\infty, \quad  \nnn=n_2- \kappa
n_2^{\nicefrac{1}{3}}+o(n_2^{\nicefrac{1}{3}}),\quad \textup{and}
\quad \rho_{m_2}=1+i \sqrt{\kappa}
n_2^{\nicefrac{-1}{3}}+o(n_2^{\nicefrac{-1}{3}}).\end{equation}
Since $\ell^{\nicefrac{-1}{2}} \,\chi_{\ell}$ converges to $1$ in probability as $\ell \to \infty$,  (\ref{e_init1}) and \eqref{e_asym} imply
\[
n_2^{\nicefrac{-1}{3}} \left(p_{m_2,
\nu
}-1\right)^{-1}\cp \kappa^{\nicefrac{-1}{2}}
u^{-1}=:\zeta.
\]
This means that the limit of $\pr(B)$ can be related to the
limit point process $\Xi_\zeta$. The interval $[0,\gl]$
corresponds to $2  \nn+[0, \gl
n_0^{\nicefrac{-1}{2}}/2]$ in our scaling
(\ref{e_scalingnu}). In the edge scaling corresponding to
$n_2$, the length of the remaining stretch, this turns into
the interval
\begin{equation}
n_2^{\nicefrac{1}{6}} (2  \nn-2
n_2^{\nicefrac{1}{2}})+[0, \gl n_0^{\nicefrac{-1}{2}}/2
]\to -\kappa+[0,0] \nonumber
\end{equation}
where the convergence follows from \eqref{e_asym}.

For $\delta>0$ let $B_\delta$ be the event that the discrete
eigenvalue equation for $M(n_2)$ with initial condition
\eqref{e_init1} does not have an eigenvalue in the interval $$2
n_2^{\nicefrac{1}{2}}+n_2^{\nicefrac{-1}{6}}\,
(-\kappa-\delta,-\kappa+\delta).$$ By  Remark \ref{r_RRV}, for any
fixed $\delta$ we have
$$\limsup_{n\to \infty} \pr(B)\le \limsup_{n\to \infty}
\pr(B_\delta)\le \pr(\Xi_\zeta \textup{ doesn't have a
point in } [\kappa-\delta,\kappa+\delta]).$$ Since this
holds for all $\delta>0$, the fact  \eqref{simplepp} gives
$\lim\pr(B)=1$, as required.
\end{proof}

\appendix

\section{Tools}

\subsection{Angular shift bounds}\label{a_ash}

The objective of this section is to prove Lemma \ref{l_ash}, which
relies on Fact \ref{f_ashid}.

\begin{proof}[Proof of Fact \ref{f_ashid}]
The general form of such a transformation is given by $w\lcirc
T=e^{i\alpha}(w-\sigma)/(1-\bar\sigma w)$, where $\sigma$ is the
pre-image of $0$. We may assume $\alpha=0$ since post-composing
$T$ with a rotation does not change the quantities in question.
Using the definition of $\ash(\varupsl{T},v,w)$ and the fact that
$|w|=|v|=1$ we have
$$
\ash(\varupsl{T},v,w)=\Arg_{[0,2\pi)}\left(\frac{w-\sigma}{v-\sigma}
\frac{\bar v-\bar \sigma}{\bar w-\bar \sigma} \frac{v}{w}
\right)-\Arg_{[0,2\pi)}(w/v).
$$
The additivity of $\Arg$ mod $2\pi$ proves \eqref{f_ashidentity}
mod $2\pi$. By definition, $\ash$ is continuous in $T$ and so also in
$\sigma$. Since $|\sigma|<1$, the right-hand side of (\ref{f_ashidentity}) is continuous in
$\sigma$. As equality holds for $\sigma=0$, the proof is complete.
\end{proof}

\begin{proof}[Proof of Lemma \ref{l_ash}]
Recall that $r.\varupsl{U}=(i-r)/(i+r)$ maps the upper half plane to the
unit disk, sending $i$ to $0$. By Fact \ref{f_ashid} we have
$$
 \ash(\varupsl{T},v,w)
 =2 \Arg\left(\frac{1-\left((i+z).\varupsl{U}\right) \bar w}{1-\left((i+z).\varupsl{U}\right) \bar v}   \right)
 = 2\Arg \left(1+x\right), \qquad
 x=\frac{z(\bar w - \bar v)}{2i+z(1+\bar v)}.
$$
If  $|z|\le \nicefrac13$ then we have $|x|\le \nicefrac12$
so we can write $ \ash(\varupsl{T},v,w)=\Re h_{v,w}(z)$ with
$$
h_{v,w}(z)=\frac{2}{i}\log \left(1+\frac{z(\bar w - \bar
v)}{2i+z(1+\bar v)}\right)=(\bar w -\bar v)\left(-z-\frac{i(2+\bar
v + \bar w)}{4}z^2+\eta_{v,w}(z) \right).
$$
Here we use the standard branch of the logarithm defined outside
the negative real axis. The second equality is Taylor expansion in
$z$. To bound the error term, we write
$$
h_{v,w}'''(z)= \frac{(\bar w-\bar v)\,p(z,\bar v,\bar
w)}{(2i+z(1+\bar v))^3(2i+z(1+\bar w))^3},
$$
where $p$ is some (explicitly computable) polynomial, so the Taylor error term satisfies
$$|\eta_{v,w}(z)|\le \frac{|z^3|}{3!}\sup_{|z|\le1/3, |v|=|w|=1}
\frac{|h_{v,w}'''(z)|}{|w-v|}<c|z|^3.
$$
This proves the quadratic approximation of the angular
shift for $|z|\le\nicefrac13$, and the other two estimates
of (\ref{e_ash}) follow easily.

For the case $|z|>\nicefrac13, v=-1$ we use the fact that
$|\Arg(1+x)| \le \pi |x|$ to conclude that
$$
 |\ash(\varupsl{T},v,w)|\le
 4\pi  \left|\frac{z(\bar w - \bar v)}{i+z(1+\bar v)/2}\right|
= 4\pi  \left|z(\bar w - \bar v) \right|\le 4\pi \,3^{d-1}   \left|z^d (\bar w - \bar v) \right|
$$
for any $d\ge1$. Using $|z|>\nicefrac13$ we get that the main terms on the right-hand side of  (\ref{e_ash}) may also be bounded by $c_d |w-v| |z|^d$ and from this we get  upper bounds of (\ref{e_asherr})
 as well.
\end{proof}

\subsection{Oscillatory sums}\label{a_osc}

Recall from the definition (\ref{d_eta}) that $\eta_\ell$ is a unit complex number with a rapidly oscillating angle. Lemma \ref{l_blackboxnew} below will show that this oscillation has an averaging effect in sums. In order to prove that we first need the following  harmonic
analysis lemma.

\begin{lemma}\label{l_vdcorp}
Suppose that $2\pi>\theta_1>\theta_2>\ldots>\theta_m>0$ and
let $s_\ell=\sum_{j=1}^\ell \theta_j$. Then
\[
\max_{1\le \ell \le m} |\sum_{j=1}^\ell e^{i s_j}|\le
c(\theta_m^{-1}+(2\pi-\theta_1)^{-1}).
\]
\end{lemma}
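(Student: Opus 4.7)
The plan is to apply Abel summation after the telescoping identity $(e^{i\theta_{j+1}}-1)e^{is_j} = e^{is_{j+1}} - e^{is_j}$, which rewrites
$$ e^{is_j} = c_j\bigl(e^{is_j} - e^{is_{j+1}}\bigr), \qquad c_j := \frac{1}{1-e^{i\theta_{j+1}}}, $$
for $1\le j\le \ell-1$. Summation by parts then yields
$$ \sum_{j=1}^{\ell-1} e^{is_j} \;=\; c_1 e^{is_1} \;-\; c_{\ell-1}e^{is_\ell} \;+\; \sum_{j=2}^{\ell-1}(c_j - c_{j-1})\,e^{is_j}, $$
so after taking absolute values the problem reduces to bounding $|c_1|+|c_{\ell-1}|$ and the total variation $\sum_j |c_j-c_{j-1}|$ uniformly in $\ell$ (the case $\ell=1$ is trivial).

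For the boundary terms, every $\theta_j$ lies in $[\theta_m,\theta_1]\subset(0,2\pi)$, so by concavity of $\sin$ on $[0,\pi/2]$,
$$ |1-e^{i\theta_j}| \;=\; 2\sin(\theta_j/2) \;\ge\; \tfrac{2}{\pi}\min(\theta_j,\,2\pi-\theta_j) \;\ge\; \tfrac{2}{\pi}\min(\theta_m,\,2\pi-\theta_1), $$
which gives $|c_j|\le c\bigl(\theta_m^{-1}+(2\pi-\theta_1)^{-1}\bigr)$ for all $j$.

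The main technical point is obtaining the \emph{right} power of $\mu:=\min(\theta_m,2\pi-\theta_1)$ for $\sum|c_j-c_{j-1}|$. With $g(s):=(1-e^{is})^{-1}$, the fundamental theorem of calculus gives
$$ |c_j-c_{j-1}| \;=\; |g(\theta_{j+1})-g(\theta_j)| \;\le\; \int_{\theta_{j+1}}^{\theta_j} |g'(s)|\,ds, $$
and the monotonicity hypothesis $\theta_1>\theta_2>\cdots>\theta_m$ makes the intervals $[\theta_{j+1},\theta_j]$ pairwise disjoint and contained in $[\theta_m,\theta_1]$. Telescoping,
$$ \sum_{j=2}^{\ell-1}|c_j-c_{j-1}| \;\le\; \int_{\theta_m}^{\theta_1}\frac{ds}{4\sin^2(s/2)} \;=\; \tfrac{1}{2}\bigl(\cot(\theta_m/2)-\cot(\theta_1/2)\bigr), $$
which by the asymptotics $\cot(x)\sim 1/x$ as $x\to 0^+$ and $\cot(x)\sim -1/(\pi-x)$ as $x\to \pi^-$ is bounded by $c\bigl(\theta_m^{-1}+(2\pi-\theta_1)^{-1}\bigr)$, as required.

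The main obstacle is precisely this variation estimate. The naive route $|c_j-c_{j-1}|\le |\theta_j-\theta_{j+1}|/|(1-e^{i\theta_j})(1-e^{i\theta_{j+1}})|$ followed by the uniform lower bound on the denominator yields only $O(\mu^{-2})$, losing a full factor of $\mu^{-1}$. The monotonicity of $\{\theta_j\}$ is essential: it allows the telescoping into a single integral of $|g'|$, whose antiderivative $-\tfrac{1}{2}\cot(\cdot/2)$ has size $O(\mu^{-1})$ rather than $O(\mu^{-2})$.
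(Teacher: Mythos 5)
Your proof is correct, and it takes a genuinely different route from the paper's. The paper proves this via the classical van der Corput lemmas for exponential sums: it constructs a differentiable interpolation $s(x)$ of the partial sums with a monotone derivative, invokes two black-box estimates (one comparing the sum to $\int e^{i s(x)}\,dx$, one bounding that integral by the reciprocal of the minimum of $|s'|$), and must split into the two cases $\theta_1\le\pi$ and $\pi<\theta_1<2\pi$, the latter handled by reversing the sequence via the substitution $\tilde\theta_u=2\pi-\theta_{\ell+1-u}$. Your argument is instead a self-contained \emph{discrete} van der Corput: Abel summation applied to the telescoping identity $e^{is_j}=(1-e^{i\theta_{j+1}})^{-1}(e^{is_j}-e^{is_{j+1}})$ reduces everything to bounding $|c_1|+|c_{\ell-1}|+\sum_j|c_j-c_{j-1}|$, and the monotonicity of the $\theta_j$ makes the intervals $[\theta_{j+1},\theta_j]$ disjoint, so the total variation of $g(s)=(1-e^{is})^{-1}$ along the sequence collapses to a single integral $\int_{\theta_m}^{\theta_1}|g'|$, whose explicit antiderivative $-\tfrac12\cot(s/2)$ gives the right one-sided blowups at both ends. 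What this buys you is precisely what the paper has to get by casework: the single closed-form antiderivative automatically produces the symmetric bound $\theta_m^{-1}+(2\pi-\theta_1)^{-1}$ with no need to distinguish $\theta_1\lessgtr\pi$, and nothing external (Hille, Stein) is cited. You also correctly diagnose the pitfall — bounding each $|c_j-c_{j-1}|$ naively by a Lipschitz estimate on $g$ loses a factor of $\mu^{-1}$ — and the observation that monotonicity rescues this is exactly the discrete analogue of the monotone-derivative hypothesis in van der Corput's second lemma.
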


\begin{proof}
We first consider the case when $\theta_1\le \pi$. Using second order interpolation 
we can construct a differentiable
function $s(x)$ on $[1,m]$ with $s(\ell)=s_\ell$ for $1\le
\ell \le m$ for which the derivative $s'(x)$ is monotone
decreasing derivative with $-\pi \le s'(x) \le -\theta_1/2$.

Our proof is based on the following lemmas of van der Corput (see
\cite{Hille} for the first and  \cite{Stein}, Chapter VIII,
Proposition 2  for the second):
\begin{enumerate}
\item If $s(x)$ has a monotone derivative with
$|s'(x)|\le \pi$ for $x\in[a,b]$ (with $a,b\in \Z$)
then the difference of $\sum_{\ell=a}^{b} e^{i s(l)}$ and  $\int_{a}^{b} e^{i\,s(x)}
dx$ is at most 3.

\item If $s'(x)$ is monotone and
$|s'(x)|>p$ on an interval $[a,b]$ then
$
|\int_a^{b} e^{i s(x)} dx|\le 3 p^{-1}.
$
\end{enumerate}
 Since for our function $\pi>|s'(x)|>\theta_m/2$ for $x\in [1,m]$ we may apply these lemmas to get the bound
$|\sum_{j=1}^\ell e^{i s_j}|\le c\, \theta_m^{-1}$.

Consider now the case  $2\pi>\theta_1>\pi$. Let $\ell^*$ be the
largest index with $\theta_{\ell^*}>\pi$, then
\be
\Big|\sum_{j=1}^\ell \exp\Big[{i \sum_{u=1}^j
\theta_u}\Big]\Big|\le \Big|\sum_{j=1}^{\ell\wedge \ell^*}
\exp\Big[{i \sum_{u=1}^j
\theta_u}\Big]\Big|+\Big|\sum_{j=\ell^*+1}^\ell \exp\Big[{i
\sum_{u=\ell^*+1}^j \theta_u}\Big]\Big|\label{e_osc}. \ee The
second sum  can be bounded by $c \,\theta_m^{-1}$ using the first
half of our proof. To bound the first sum, note that
$$
\Big|\sum_{j=1}^{\ell} \exp\Big[{i \sum_{u=1}^j
\theta_u}\Big]\Big|=\Big|\sum_{j=1}^{\ell} \exp\Big[{i \sum_{u=1}^j
\tilde \theta_{u}}\Big]\Big|, \qquad \tilde\theta_u=2\pi-\theta_{\ell+1-u}
$$
and for $\ell\le \ell^*$ we have
$$\pi>2\pi-\theta_{\ell^*}\ge \tilde \theta_1>\tilde \theta_2>\ldots>\tilde \theta_\ell\ge 2\pi-\theta_1>0.$$
Thus  the first half of the proof can be
applied again to get the bound $c (2\pi-\theta_1)^{-1}$.
\end{proof}

The following lemma describes the averaging effects of the oscillating  unit complex numbers  $\eta_\ell$.

\begin{lemma}\label{l_blackboxnew}
Let $g_\ell\in \CC$ for $\ell\in \N$ and $\ell_0<\ell_1\le n_0$. Then
\begin{eqnarray*}
\big|\Re\sum_{\ell=\ell_0}^{\ell_1} g_\ell \eta_\ell
\big|&\!\le\!& c \left( \nn
k_1^{-\nicefrac{1}{2}}+1\right) \big| g_{\ell_1}\big|+c
\sum_{\ell=\ell_0}^{\ell_1-1} \left( \nn
k_{\phantom{1}}^{-\nicefrac{1}{2}}+1\right)| g_{\ell+1}- g_\ell|
\\
\big|\Re\sum_{\ell=\ell_0}^{\ell_1} g_\ell \eta_j^2 \big|&\!\le\!&
c \left( \nn
k_1^{-\nicefrac{1}{2}}+\nn^{-1}
k_0^{\nicefrac{1}{2}}\right) \left| g_{\ell_1}\right|+c
\sum_{\ell=\ell_0}^{\ell_1-1} \left( \nn
k_{\phantom{1}}^{-\nicefrac{1}{2}}+\nn^{-1}
k_0^{\nicefrac{1}{2}}\right)| g_{\ell+1}- g_\ell|
\end{eqnarray*}
(We used the shorthanded notation $k=n_0-\ell$,
$k_1=n_0-\ell_1$ and $k_0=n_0-\ell_0$.)
\end{lemma}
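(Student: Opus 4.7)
The plan is to prove both bounds by Abel summation applied to the partial sums of the oscillating unit complex numbers $\eta_\ell$ and $\eta_\ell^2$, after estimating these partial sums via the van der Corput-type bound of Lemma \ref{l_vdcorp}. Since $|\eta_{\ell_0-1}|=1$ I can harmlessly factor it out and work with $\tilde\eta_\ell := \eta_\ell/\eta_{\ell_0-1}$, whose argument is a cumulative sum starting at the index $\ell_0$.

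First I would identify the angle increments. From \eqref{e_rhorho} one has $|\rho_\ell|=1$ with $\arg\rho_\ell = \arctan(2\sqrt{k}/\mu_n)$, so the increments $\omega_\ell := 2\arg\rho_\ell\in(0,\pi)$ are strictly decreasing in $\ell$ (as $k = n_0 - \ell$ decreases). Lemma \ref{l_vdcorp} then applies directly to $S_\ell := \sum_{j=\ell_0}^\ell \tilde\eta_j$ and yields
\[
|S_\ell| \;\le\; c\bigl(\omega_\ell^{-1} + (2\pi-\omega_{\ell_0})^{-1}\bigr).
\]
The second term is bounded because $\omega_{\ell_0}\le\pi$, and the elementary estimate $\arctan x\ge c\min(x,1)$ gives $\omega_\ell^{-1}\le C(\mu_n k^{-1/2}+1)$, so $|S_\ell|\le C(\mu_n k^{-1/2}+1)$. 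Substituting this into the Abel summation identity
\[
\sum_{\ell=\ell_0}^{\ell_1} g_\ell \tilde\eta_\ell \;=\; g_{\ell_1}S_{\ell_1} + \sum_{\ell=\ell_0}^{\ell_1-1}(g_\ell-g_{\ell+1})S_\ell
\]
and passing to real parts gives the first claimed inequality.

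For the second inequality the phase increments of $\tilde\eta_\ell^2$ are $2\omega_\ell\in(0,2\pi)$, which can approach $2\pi$ when $k\gg\mu_n^2$. Lemma \ref{l_vdcorp} now yields both a $(2\omega_\ell)^{-1}$ term and a genuinely non-trivial $(2\pi-2\omega_{\ell_0})^{-1}$ term. The identity $\arctan x + \arctan(1/x)=\pi/2$ rewrites the latter as $\bigl(4\arctan(\mu_n/(2\sqrt{k_0}))\bigr)^{-1}$, and the same elementary estimate gives $(2\pi-2\omega_{\ell_0})^{-1}\le C(\mu_n^{-1}\sqrt{k_0}+1)$. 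The identical Abel summation then produces the desired bound once the residual constants are absorbed; this works because $k\le k_0$ forces $\mu_n k^{-1/2}+\mu_n^{-1}\sqrt{k_0}\ge 2(k_0/k)^{1/4}\ge 2$ by AM--GM.

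The only real subtlety, and the step I would handle most carefully, is the partial sum for $\eta^2$: two genuinely different regimes contribute, namely small increments at the right endpoint (producing the $\mu_n k^{-1/2}$ term) and increments close to $2\pi$ at the left endpoint (producing the $\mu_n^{-1}\sqrt{k_0}$ term). The second regime never arises for the $\eta$ sum because $\omega_{\ell_0}\le\pi$, but for $\eta^2$ the cancellation near the full period is exactly what forces the $k_0$-dependence in the statement.
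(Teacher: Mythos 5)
Your proof is correct and follows essentially the same path as the paper: factor out the constant initial phase, apply Abel summation to the partial sums of $\eta_\ell^d$, and bound those partial sums by Lemma \ref{l_vdcorp} together with the elementary $\arctan$ estimates for $\arg\rho_\ell$ (the paper records the same bounds as $\Arg\rho_\ell\le \mu_n^{-1}\sqrt{k}$ and $\pi/2-\Arg\rho_\ell\le \mu_n k^{-1/2}$). Your extra remark that the spurious $+1$ in the $\eta^2$ estimate is absorbed via AM--GM is a correct observation of a detail the paper leaves implicit.
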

\begin{proof}
For $d=1,2$ we introduce $F_{d,j}=\sum_{m=\ell_0}^j \eta^d_m$ with
$F_{d,\ell_0-1}=0$. By partial summation
\be\label{e_abel}
\sum_{j=\ell_0}^{\ell_1}  g_j \eta_j=F_{d,\ell_1}
g_{\ell_1}+\sum_{j=\ell_0}^{\ell_1-1} F_{d,j} ( g_j- g_{j+1}).
\ee
From (\ref{e_rhorho}) we get the estimates
$$\Arg{\rho_{\ell}}\le \nn^{-1} k_{\phantom{1}}^{\nicefrac{1}{2}},
\quad\textup{
and}\quad \pi/2-\Arg \rho_{\ell}\le   \nn k_{\phantom{1}}^{-\nicefrac{1}{2}}.$$
Together with  (\ref{d_eta}) this means that we can use
Lemma \ref{l_vdcorp} to get
$$
\big |F_{1,\ell} \big|\le c \left( \nn
k_1^{-\nicefrac{1}{2}}+1\right) \qquad \big| F_{2,\ell}\big|\le c
\left( \nn
k_1^{-\nicefrac{1}{2}}+\nn^{-1}
k_0^{\nicefrac{1}{2}}\right). $$ This with (\ref{e_abel}) implies
the lemma.
\end{proof}

\subsection{A limit theorem for random difference equations }\label{a_sdetools}

\begin{proof}[Proof of Proposition \ref{p_turboEK}]
Let $\|\cdot \|_\infty$ denote supremum norm on $[0,T]$.
For a two-parameter function $f$ and $x\in \RR$ let  $\IO$
denote the integral $\IO_{f,x}(t)= \int_0^t f(s,x)\,ds$. We
recycle this notation for a function $X:[0,T]\to \RR$ to
write  $\IO_{f,X}(t)= \int_0^t f(s,X(s))\,ds$.
%

The proof of this proposition is based on  Theorem 7.4.1 of
\cite{EthierKurtz}, as well as Corollary 7.4.2 and its proof. (See
also \cite{SV}.) It states that  if the limiting SDE has unique
distribution (i.e. the martingale problem is well-posed) and also
\begin{eqnarray}\label{e kuka}
\|\IO_{b^n,X^n}-\IO_{b,X^n}\|_\infty &\lip& 0,\\
\|\IO_{a^n,X^n}-\IO_{a,X^n}\|_\infty &\lip& 0,\nonumber
\end{eqnarray}
\begin{equation}
\textup{for every $\eps>0$}\qquad    \sup_{x,\ell}
\pr(|Y^n_\ell(x)|\ge \eps)\longrightarrow 0\label{e kuka3},
\end{equation}
then $X^n\cd X$. The theorem there only deals with the case
of time-independent coefficients, but adding time as an
extra coordinate extends  the results to the general case.

Because of our assumptions on $a$ and $b$ the well-posedness of
the martingale problem follows from Theorem 5.3.7 of
\cite{EthierKurtz}  (see especially the remarks following the
proof), and even pathwise uniqueness holds. Condititon  (\ref{e
kuka3}) follows  from the uniform third absolute
moment bounds (\ref{e 3m}) and Markov's inequality. Thus we only need to show (\ref{e
kuka}) as well as the analogous statement for $a$, for which the
proof is identical. We do this by bounding the successive
uniform-norm distances between
$$
\IO_{b^n,X^n},\quad \IO_{b^n,X^{n,L}}, \quad
\IO_{b,X^{n,L}},\quad \IO_{b,X^n},
$$
where $X^{n,L}_\ell= X^n_{K\lfloor \ell/K\rfloor} $ with $K=\lceil
nT/L \rceil$, and $X^{n,L}(t)=X^{n,L}_{\lfloor nt \rfloor}$. In
words, we divide $[0,\lfloor nT \rfloor]$ into $L$ roughly equal
intervals and then set $X_\ell^{n,L}$ to be constant on each
interval and equal to the first value of $X^{n}_\ell$ occurring
there.

If a function $f$ takes countably many values $f_i$, then
for any $h$ we have
$$
\|\IO_{h,f}\|_\infty\le \sum_{i} \|\IO_{h,f_i}\|_\infty\
$$
Since $X^{n,L}$ takes at most $L$ values, we have
$$
 \|\IO_{b^n,X^{n,L}}-\IO_{b,X^{n,L}}\|_\infty=\|\IO_{b^n-b,X^{n,L}}\|_\infty\le
 L \sup_x \|\IO_{b^n-b,x}\|_\infty=Lo(1)
$$
by \eqref{e fia} where $o(1)$ is uniform in $L$ and refers to
$n\to \infty$.
From (\ref{e lip}), the other terms
satisfy
\begin{eqnarray*}
\|\IO_{b^n,X^{n,L}}-\IO_{b^n,X^{n}}\|_\infty&\le&
T\|b^n(\cdot,X^{n,L}(\cdot))-b^n(\cdot,X^{n}(\cdot))\|_\infty \\
&\le& c T \|X^n-X^{n,L}\|_\infty + o(1)
\end{eqnarray*}
The same  holds with $b$ replacing $b^n$. It now
suffices to show that
\begin{equation}
\ev \|X^{n,L}-X^n\|_\infty = \ev \sup_\ell
|X^{n,L}_\ell-X^n_\ell|\le f(L)\label{e_L1}
\end{equation}
uniformly in $n$ where $f(L)\to 0$ as $L\to\infty$. The
left-hand side of (\ref{e_L1}) is bounded by
$$
\ev \sup_\ell |X^n_\ell-\frac1n\sum_{k={\lfloor
\ell/K\rfloor K}}^{\ell-1} b_n(X^n_\ell)-X^{n,L}_\ell|+ \ev
\sup_\ell |\frac1n\sum_{k={\lfloor \ell/K\rfloor K}}^\ell
b(X_\ell)|
$$
and the second quantity is bounded by $T\sup_{\ell,x}
|b^n_\ell(x)|/L$. The first quantity can be written as $\ev
M^*$ where
$$
M^*=\max_{i=0,\ldots,L-1} M_i^*, \qquad
M_i^*=\max_{\ell=0,\ldots, K-1} |M_{i,\ell}|,\qquad
 M_{i,\ell}=
X_{iK+\ell}-X_{iK}- \frac1n\sum_{k=0}^{\ell-1}
b^n(X_{iK+k}).
$$
Note that for each $i$, $M_{i,\ell}$ is a martingale. For any martingale
with $M_0=0$ we have
$$
\ev \max_{k\le n}|M_k|^3 \le c \ev \Big|
\sum_{k\le n} \ev[(M_k-M_{k-1})^2|\mathcal F_{k-1}]
\Big| ^{3/2}
\le c n^{3/2} \max_{k\le
n} \ev[|M_k-M_{k-1}|^3|\mathcal F_{k-1}].
$$
The first step is the  Burkholder-Davis-Gundy inequality   (see
\cite{Kallenberg}, Theorem 26.12) and the second step follows from
Jensen's inequality. Therefore \eqref{e 3m} implies
$$
\ev [|M_i^*|^3| \mathcal F_{iL}] \le c (n/L)^{3/2}
n^{\nicefrac{-3}{2}} =c L^{\nicefrac{-3}{2}},
$$
which gives the desired conclusion
$$
(\ev M^*)^3\le \ev (M^*)^3\le \ev \sum_{i=0}^{L-1}
(M_i^*)^3 \le c L^{\nicefrac{-1}{2}}.
$$
Letting first $n\to\infty $ and then $L\to\infty$ gives \eqref{e_L1} and
\eqref{e kuka}.
\end{proof}

{\bf Acknowledgments.} This research is supported by the
Sloan and Connaught grants, the NSERC discovery grant
program, and the Canada Research Chair program (Vir\'ag).
Valk\'o is partially supported by the Hungarian Scientific
Research Fund grant K60708. We thank Yuval Peres for
comments simplifying the proof of Proposition
\ref{p_turboEK}, and also Mu Cai, Laure Dumaz, Peter
Forrester and Brian Sutton for helpful comments. We are
indebted to the anonymous referees for their extensive
comments and suggestions.
\bibliography{sse}

\end{document}